\documentclass[microtype]{gtpart}
\agtart
\usepackage{pinlabel}  
\usepackage{graphicx}  
\usepackage[all]{xy}
\usepackage{amscd}
\usepackage{enumitem}
\usepackage{cleveref} 
\usepackage{xy, mathabx, sseq}
\usepackage[{a4paper, total={6.5in, 8in}}]{geometry}
\setlength\mathsurround{0pt}
\theoremstyle{plain}
\newtheorem{prop}{Proposition}[subsection]
\newtheorem{theorem}[prop]{Theorem}
\newtheorem{corollary}[prop]{Corollary}

\newtheorem{lemma}[prop]{Lemma}
\newtheorem{definition}[prop]{Definition}
\newtheorem{running notations}[prop]{Running notations}
\theoremstyle{definition}
\newtheorem{remark}[prop]{Remark}
\newtheorem{example}[prop]{Example}
\newtheorem{question}[prop]{Question}
\newtheorem{observation}[prop]{Observation}

\newtheorem{runningassumption}[prop]{Running Assumption}
\newtheorem{convention}[prop]{Conventions}

\usepackage{tikz}
\usetikzlibrary{matrix}
\usepackage{tikz-cd}
\usetikzlibrary{positioning}

\title{A May-type spectral sequence for higher \\ topological Hochschild homology}
\author{Gabe Angelini-Knoll} 
\givenname{Gabe}
\surname{Angelini-Knoll}
\address{Department of Mathematics, 
		Michigan State University, 
		619 Red Cedar Road,
		C207 Wells Hall,
		East Lansing, 
		MI 48824}
\email{angelini@math.msu.edu}
\urladdr{http://users.math.msu.edu/users/angelini/}

\author{Andrew Salch}
\givenname{Andrew}
\surname{Salch}
\address{Department of Mathematics,
	         	Wayne State University,
	         	1150 F/AB,
			656 W. Kirby,
	         	Detroit, MI, 48202,
	        		USA}
\email{asalch@math.wayne.edu}
\urladdr{http://math.wayne.edu/~asalch/}

%
%

\keyword{topological Hochschild homology}
\keyword{homotopy theory}
\subject{primary}{msc2017}{55P42}
\subject{secondary}{msc2017}{55T05}


\arxivreference{1612.00541}


\newcommand{\mathbbm}[1]{\text{\usefont{U}{bbm}{m}{n}#1}}

\DeclareMathOperator{\Comm}{{\rm Comm}}
\DeclareMathOperator{\Day}{\rm Day}
\DeclareMathOperator{\Ext}{\rm Ext}
\DeclareMathOperator{\sSet}{\rm sSet}
\DeclareMathOperator{\Sets}{\rm Sets}
\DeclareMathOperator{\id}{{\rm id}}
\DeclareMathOperator{\ob}{{\rm ob}}
\DeclareMathOperator{\holim}{{\rm holim}}
\DeclareMathOperator{\hocolim}{{\rm hocolim}}
\DeclareMathOperator{\colim}{{\rm colim}}
\DeclareMathOperator{\op}{{\rm op}}
\DeclareMathOperator{\Ho}{{\rm Ho}}
\DeclareMathOperator{\Ab}{{\rm Ab}}
\DeclareMathOperator{\Tor}{{\rm Tor}}
\DeclareMathOperator{\cof}{{\rm cof}}
\DeclareMathOperator{\bN}{\mathbb{N}}

\DeclareMathOperator{\Sp}{\rm Sp}
\DeclareMathOperator{\sr}{{\rm sr}}

\newcommand{\Smash}{\wedge}
\newcommand{\bigsmash}{\bigwedge}
\raggedbottom 

\begin{document}
\begin{abstract}
Given a filtration of a commutative monoid $A$ in a symmetric monoidal stable model category $\mathcal{C}$, we construct a spectral sequence analogous to the May spectral sequence whose input is the higher order topological Hochschild homology of the associated graded commutative monoid of $A$, and whose output is the higher order topological Hochschild homology of $A$. We then construct examples of such filtrations and derive some consequences: for example, given a connective commutative graded ring $R$, we get an upper bound on the size of the $THH$-groups of $E_{\infty}$-ring spectra $A$ such that $\pi_*(A) \cong R$. 
\end{abstract} 
\maketitle
\tableofcontents
\section{Introduction.}
Suppose $A = F_0A \supseteq F_1A \supseteq F_2A\supseteq \dots$ is a filtered augmented $k$-algebra, where $k$ is a field.
In his 1964 Ph.D. thesis,~\cite{MR2614527}, J.~P.~May sets up a spectral sequence with input $\Ext^{*,*}_{E^*_0A}(k,k)$
and which converges to $\Ext^{*}_{A}(k,k)$. Here $E^*_0A = \oplus_{n\geq 0} F_nA/F_{n+1}A$ is the associated graded algebra of the filtration of $A$.

In the present paper, we construct an analogous spectral sequence for topological \linebreak
Hochschild homology and its ``higher order'' generalizations (as in~\cite{MR1755114}). 
Given a filtered $E_{\infty}$-ring spectrum $A$, we construct a spectral sequence
\begin{equation}\label{ss 2304} E^1_{*,*} \cong THH_{*,*}(E^*_0A) \Rightarrow THH_*(A).\end{equation}
Here $E^*_0A$ is the {\em associated graded $E_{\infty}$-ring spectrum of $A$}; part of our work in this paper is to define this ``associated graded $E_{\infty}$-ring spectrum,'' and prove that it has good formal properties and useful examples (eg Whitehead towers; see~\eqref{whitehead tower 3408}, below).

More generally: given any connective generalized homology theory $E_*$ (see Definition \ref{def of gen hom thy}), and given any simplicial finite set $X_{\bullet}$, we construct a spectral sequence
\begin{equation}\label{ss 2305} E^1_{*,*} \cong E_{*,*}(X_{\bullet}\otimes E^*_0A) \Rightarrow E_*(X_{\bullet}\otimes A).\end{equation}
We recover spectral sequence~\eqref{ss 2304} as a special case of~\eqref{ss 2305} by letting $E_* = \pi_*$ and letting $X_{\bullet}$ be a simplicial model for the circle $S^1$. 

We formulate a definition (see Definition~\ref{def of dec filt obj}) of a ``filtered $E_{\infty}$-ring spectrum'' which is sufficiently well-behaved that we can actually construct a spectral sequence of the form \eqref{ss 2305}, identify its $E^1$- and $E^{\infty}$-terms and prove its multiplicativity and convergence properties. Actually our constructions and results work in a somewhat wider level of generality than commutative ring spectra: we fix a symmetric monoidal stable model category $\mathcal{C}$ satisfying some reasonable hypotheses (spelled out in Running Assumptions~\ref{ra:1} and~\ref{ra:2}), and we work with filtered commutative monoid objects in $\mathcal{C}$. In the special case where $\mathcal{C}$ is the category of symmetric spectra in pointed simplicial sets, in the sense of~\cite{MR1695653} and~\cite{schwedebook}, the commutative monoid objects are equivalent to $E_{\infty}$-ring spectra. Our hope is that our framework is sufficiently general that an interested reader could 
also apply it to monoidal model categories of equivariant, motivic, and/or parametrized spectra.

The main difficulty in constructing the spectral sequence \eqref{ss 2305} at this level of generality is identifying the $E^1_{*,*}$-page. We refer to the theorem identifying the $E^1_{*,*}$-page as the \emph{fundamental theorem of the May filtration}, and briefly it can be described in words using the slogan: ``higher order Hochschild homology commutes with passage to the associated graded commutative ring spectrum." This theorem does not follow easily by categorical properties, and in fact the bulk of Section \ref{fund thm section} consists of a proof of this theorem. 

We observe in Appendix \ref{THH-May w coeff}  that with the right adjustments, one can construct a version of spectral sequence~\eqref{ss 2305} with coefficients in a filtered symmetric $A$-bimodule $M$:
\begin{equation}\label{ss 2305a} E^1_{*,*} \cong E_{*,*}(X_{\bullet}\otimes (E^*_0A, E^*_0M)) \Rightarrow E_*(X_{\bullet}\otimes (A,M)),\end{equation}
and as a special case,
\begin{equation}\label{ss 2304a} E^1_{*,*} \cong E_{*,*}THH(E^*_0A, E^*_0M) \Rightarrow E_*THH(A,M).\end{equation}

Some of the most important cases of filtered commutative ring spectra, or filtered commutative monoid objects in general, are those which arise from Whitehead towers: given a cofibrant connective commutative monoid in symmetric spectra, we construct a filtered commutative monoid
\begin{equation}\label{whitehead tower 3408} A = \tau_{\geq 0}A \leftarrow  \tau_{\geq 1} A  \leftarrow \tau_{\geq 2} A  \leftarrow \dots \end{equation}
where each map is a cofibration in $\mathcal{C}$ and the induced map $\pi_n(\tau_{\geq m} A) \rightarrow \pi_n(\tau_{\geq m-1} A)$ is an isomorphism if $n\geq m$, and $\pi_n(\tau_{\geq m}A) \cong 0$ if $n<m$. 
While the homotopy type of $\tau_{\geq m}A$ is very easy to construct, 
it takes us some work to construct a sufficiently rigid \emph{multiplicative} model for the Whitehead tower~\eqref{whitehead tower 3408}; this is the content of Theorem~\ref{post filt}.

If $\mathcal{C}$ is the category of symmetric spectra in pointed simplicial sets, then the associated graded ring spectrum of the Whitehead tower~\eqref{whitehead tower 3408} is the generalized Eilenberg-Mac Lane ring spectrum $H\pi_*(A)$ of the graded ring $\pi_*(A)$. 
Consequently we get a spectral sequence
\begin{equation}\label{ss 2305b} E^1_{*,*} \cong E_{*,*}(X_{\bullet}\otimes H\pi_*A) \Rightarrow E_*(X_{\bullet}\otimes A),\end{equation}
and as a special case,
\begin{equation}\label{ss 2304b} E^1_{*,*} \cong THH_{*,*}(H\pi_*A) \Rightarrow THH_*(A).\end{equation}

Many explicit computations are possible using spectral sequence~\eqref{ss 2304b} and its generalizations with coefficients in a bimodule (see Appendix \ref{THH-May w coeff}). For example, in~\cite{161200548}, the first author uses these spectral sequences to compute topological Hochschild homology of the algebraic $K$-theory spectra of a large class of finite fields. 

In the present paper, in lieu of explicit computations using our new spectral sequences, we point out that the mere existence of these spectral sequences implies an upper bound on the size of the topological Hochschild homology groups of a ring spectrum: namely, if $R$ is a graded-commutative ring and $X_{\bullet}$ is a simplicial finite set and $E_*$ is a generalized homology theory, then for any $E_{\infty}$-ring spectrum $A$ such that $\pi_*(A) \cong R$,  $E_*(X_{\bullet}\otimes A)$ is a subquotient of $E_*(X_{\bullet} \otimes HR)$. Here we write $HR$ for the generalized Eilenberg-Mac Lane spectrum with $\pi_*(HR)\cong R$ as graded rings. 

Consequently, in Theorem~\ref{upper bound thm} we arrive at the slogan: {\em 
the topological Hochschild homology of $A$ is bounded above by the topological Hochschild homology of $H\pi_*(A)$.} 
This lets us extract information about the topological Hochschild homology of $E_{\infty}$-ring spectra $A$ from information depending {\em only on the ring $\pi_*(A)$ of homotopy groups of $A$.}
We demonstrate how to apply this idea in Theorem~\ref{polynomial case 120} and its corollaries, by working out the special case where $R = \mathbb{Z}_{(p)}[x]$ for some prime $p$, with $x$ in positive grading degree $2n$. We get, for example, that for any $p$-local finite-type $E_{\infty}$-ring spectrum $A$ such that $\pi_*(A) \cong \mathbb{Z}_{(p)}[x]$,
the Poincar\'{e} series of the mod $p$ topological Hochschild homology $(S/p)_*(THH(A))$ satisfies the inequality
\[ \sum_{i\geq 0} \left( \dim_{\mathbb{F}_p} (S/p)_*(THH(A))\right) t^i \leq 
 \frac{(1 + (2p-1)t)(1 + (2n+1)t)}{(1 - 2nt)(1 - 2pt)}, \]
where we interpret $\le$ as in Definition \ref{partial order on power series}. Furthermore, if $p$ does not divide $n$, then $THH_{2i}(A) \cong 0$ for all $i$ congruent to $-p$ modulo $n$ such that $i\leq pn-p-n$, and
$THH_{2i}(A) \cong 0$ for all $i$ congruent to $-n$ modulo $p$ such that $i\leq pn-p-n$. 
In particular, $THH_{2(pn-p-n)}(A) \cong 0$. If $p$ divides $n$, then $THH_{i}(A)\cong 0$, unless $i$ is congruent to $-1,0,$ or $1$ modulo $2p$. 

As a specific example, consider the $p$-local Adams summand $\ell$, which satisfies the property that $p$ does not divide $n=p-1$ for any prime $p$. The theorem then states that $THH_{2i}(\ell)\cong 0$ for all $i$ congruent to $-1$ modulo $p-1$ and all $i$ congruent to $1$ modulo $p$ such that $i\leq p^2-3p+1$, which agrees with the computation of $THH_{*}(\hat{\ell}_p)$ in these degrees due to Angeltveit-Hill-Lawson \cite[Theorem  2.6]{AHL10}. 

In Section \ref{convenient model cat for filtered objects}, we further the development of filtered objects in a model category satisfying Running Assumptions \ref{ra:1} and \ref{ra:2} along with Running Assumption \ref{ra:3}. This theory is used to construct multiplicative Whitehead towers as cofibrant decreasingly filtered commutative monoids in symmetric spectra in pointed simplicial sets (Theorem \ref{post filt}). In Appendix B, we discuss the Bousfield-Kan spectral sequence  (Theorem \ref{bkss existence thm}) in order to address a technical lemma (Lemma \ref{main lemma}) needed to construct the multiplicative Whitehead tower. 

There is some precedent for spectral sequence~\eqref{ss 2304}: when $A$ is a filtered commutative ring (rather than a filtered commutative ring spectrum), M.~Brun constructed a spectral sequence of the form~\eqref{ss 2304} in the paper~\cite{MR1750729}. In~\cite{MR1823499}, Brun also studies $THH$ of filtered ring spectra, but using the technology of FSPs rather than our general approach, and stops short of producing the THH-May spectral sequence or the multiplicative Whitehead filtration. In Theorem~2.9 of the preprint~\cite{angeltveitpreprint}, V.~Angeltveit remarks that a version of spectral sequence~\eqref{ss 2304} exists for commutative ring spectra by virtue of a lemma in~\cite{MR1750729} on associated graded FSPs of filtered FSPs; filling in the details to make this spectral sequence have the correct $E^1$-term, $E^{\infty}$-term, convergence properties, and multiplicativity properties takes a lot of work, and even aside from the substantially greater level of generality of the results in the present paper (allowing $X_{\bullet}\otimes A$ and not just $S^1\otimes A$, working with commutative monoids in symmetric monoidal model categories rather than any particular model for ring spectra, working with coefficient bimodules as in Appendix \ref{THH-May w coeff}), we think it is valuable to add these very nontrivial details to the literature. 

\subsubsection{Acknowledgements.}
We are grateful to C.~Ogle and Ohio State University for their hospitality in hosting us during a visit to talk about this project and A.~Blumberg for a timely and useful observation. We are indebted to A.~Lindenstrauss and B.~Richter for their careful reading and insightful questions which led to improvements in this paper.
We also thank J.~Greenlees for his editorial help, and J.~Rognes as well as the anonymous referee for their careful reading and very perceptive observations and suggestions which helped us to improve this paper. We also thank the anonymous referee for their generosity and patience in reviewing such a long paper. 
The first author would like to thank A.~Lindenstrauss, T.~Gerhardt, and C.~Malkiewich for helpful conversations on the material in this paper. 

\section{Conventions and running assumptions.} \label{conv and ra} 
\begin{convention} \label{cof assump}
By convention, the ``cofiber of $f\co X\longrightarrow Y$" will mean that $f$ is a cofibration and we are forming the pushout $Y\coprod_X 0$ in the given pointed model category. Also, by convention, we will write $Y/X$ as shorthand for the cofiber of $f\co X\longrightarrow Y$.
\end{convention} 
\begin{convention} 
By convention, given a coproduct $\coprod_{i\in I} X_i$ of objects $X_i$ in a cocomplete category $\mathcal{A}$, we will refer to the map $X_j\hookrightarrow \coprod_{i\in I} X_i$ for $j\in I$, given by definition of the coproduct, as the inclusion map. When $\mathcal{A}$ is a subcategory of $\mathcal{B}$, we will also refer to the evident functor $\mathcal{A}\hookrightarrow \mathcal{B}$ as an inclusion. Since one use of inclusion refers to map between objects in a category $\mathcal{A}$ and the second use of inclusion refers to a functor between categories, no confusion should arise. 
\end{convention}
We will write $\Comm(\mathcal{C})$ for the category of commutative monoid objects in a symmetric monoidal category $\mathcal{C}$, we will write $s\mathcal{C}$ for the category of simplicial objects in $\mathcal{C}$, and we will write $\Smash$ for the symmetric monoidal product in a symmetric monoidal category $\mathcal{C}$, since the main example we have in mind is the category of symmetric spectra, in which the symmetric monoidal product is the smash product. We will write $S^1_{\bullet}$ throughout for the minimal simplicial model for the circle $\Delta[1]/\delta \Delta[1]$, which is important for defining topological Hochschild homology.  
\begin{runningassumption} \label{ra:1}
Throughout, let $\mathcal{C}$ be a 
left proper stable model category
equipped with the structure of a symmetric monoidal model category in the sense of
\cite{MR1734325}, satisfying the following axioms:
A model structure (necessarily unique) on $\Comm(\mathcal{C})$ exists in which 
weak equivalences and fibrations are created by the forgetful functor \linebreak
$\Comm(\mathcal{C})\rightarrow\mathcal{C}$. 
The forgetful functor $\Comm(\mathcal{C})\rightarrow \mathcal{C}$ commutes with geometric realization of simplicial objects and sends cofibrant objects to cofibrant objects. 
Geometric realization of simplicial 
cofibrant objects in $\mathcal{C}$ commutes with
the monoidal product, ie, if $X_{\bullet},Y_{\bullet}$ are simplicial 
cofibrant objects of 
$\mathcal{C}$, then the canonical comparison map
\[ \left| X_{\bullet}\Smash Y_{\bullet}\right| \rightarrow
\left| X_{\bullet}\right| \Smash \left| Y_{\bullet}\right|\]
is a weak equivalence in $\mathcal{C}$. 
We will say a class of morphisms $\mathcal{M}$ in $\mathcal{C}$ is called \emph{retractile} if whenever a composite $A\overset{f}{\rightarrow}B\overset{g}{\rightarrow} C$ is in $\mathcal{M}$ then $A\overset{f}{\rightarrow}B$ is in $\mathcal{M}$. We additionally assume that $\mathcal{C}$ comes equipped with a class of morphisms, which we will refer to as \emph{level-wise cofibrations}, which are closed under composition, and such that the cofibrations in $\mathcal{C}$ are contained in the class of level-wise cofibrations, and such that the class of level-wise cofibrations is retractile. We also assume that if $X$ is a cofibrant object in $\mathcal{C}$ and $f$ is a level-wise cofibration in $\mathcal{C}$, then $X\wedge f$ is a level-wise cofibration in $\mathcal{C}$. 
\end{runningassumption} 

Here are a few immediate consequences of these assumptions about $\mathcal{C}$:
Since being cofibrantly generated is part of the definition of a monoidal model category in \cite{MR1734325}, $\mathcal{C}$ is cofibrantly generated and hence can be equipped with functorial factorization systems.
We assume that a choice of functorial factorization has been made and we will use it implicitly whenever a cofibration-acyclic-fibration or
acyclic-cofibration-fibration factorization is necessary.
Smashing with any given object is a left adjoint, hence preserves colimits.
Smashing with any given cofibrant object is a left Quillen functor, hence preserves cofibrations and acyclic cofibrations, and by Ken Brown's Lemma, preserves weak equivalences between cofibrant objects. 

Since $\mathcal{C}$ is assumed left proper, a homotopy cofiber of any
map $f: X \rightarrow Y$ between cofibrant objects in $\mathcal{C}$ can be computed
by factoring $f$ as $f = f_2\circ f_1$ with $f_1: X \rightarrow \tilde{Y}$ a cofibration and $f_2: \tilde{Y}\rightarrow Y$ an acyclic fibration, and then taking the
pushout of the square
\[\xymatrix{ X\ar[r]^{f_1} \ar[d] & \tilde{Y} \\ 0. &  }\]
In particular, if $f$ is already a cofibration, the pushout map
$Y \rightarrow Y\coprod_X 0$ is a homotopy cofiber of $f$.

The added assumption that $\mathcal{C}$ comes equipped with a retractile class of morphisms called the level-wise cofibrations was necessary to resolve a technical issue pointed out to the authors by Birgit Richter and Ayelet Lindenstrauss. 
We recognize that it complicates the running assumptions, however it is satisfied in the main example of interest (as discussed below) and provides the THH-May spectral sequence with some desired properties. The level-wise cofibrations are used only in two places in the paper, in Observation~\ref{observation on cofibrancy conditions} and in Remark~\ref{gabes remark on cofibrancy of degree 0 quotient}.

\begin{runningassumption} \label{ra:2}
In addition to Running Assumption \ref{ra:1}, we assume our model category $\mathcal{C}$ satisfies the following condition: a map $X_{\bullet}\rightarrow Y_{\bullet}$ in the category of simplicial objects in $\mathcal{C}$ is a Reedy cofibration between Reedy cofibrant objects whenever the following all hold:
\begin{enumerate}
\item{} \label{it1}The object $X_n$ in $\mathcal{C}$ is cofibrant for each $n$. 
\item{} \label{it2} Each degeneracy map $s_i\co X_n\rightarrow X_{n+1}$ and $s_i\co Y_n\rightarrow Y_{n+1}$ is a level-wise cofibration in $\mathcal{C}$. 
\item{} \label{it3} Each induced map $X_n \rightarrow Y_n$ is a cofibration in $\mathcal{C}$. 
\end{enumerate}
A consequence of this assumption is that the geometric realization of a map of simplicial objects in $\mathcal{C}$ satisfying Item \eqref{it1}, Item \eqref{it2}, and Item \eqref{it3} is a cofibration.  
\end{runningassumption}
The main motivating example of a category $\mathcal{C}$ satisfying Running Assumption \ref{ra:1} is the category of symmetric spectra in pointed simplicial sets  $\sSet_*$, 
denoted $\Sp_{\sSet_*}$, equipped with the positive flat stable model structure. In this case, then $\Comm(\mathcal{C})$ is the category of commutative ring spectra and it is known to be equivalent to the category of $E_{\infty}$-ring spectra, see Corollary 4.8 in \cite{MR3666740}. 
The existence of the desired model structure on $\Comm(\mathcal{C})$ is proven in Theorem 4.1 of \cite{MR1734325}. 
The fact that the forgetful functor $\Comm(\mathcal{C})\rightarrow\mathcal{C}$ commutes with geometric realization in the positive flat stable model structure on $\mathcal{C}$ is a consequence of~\cite[Thm. 1.6]{MR2580430}. 
The fact that there exists a model structure on $\Comm(\mathcal{C})$ created by the forgetful functor and that the forgetful functor preserves cofibrations with cofibrant source is a consequence of Theorem 5.7 in \cite{MR3666740}.  
The category $\Sp_{\sSet_*}$ with the positive flat stable model structure satisfies Running Assumption \ref{ra:2}, as the authors prove in \cite{161106215}. 
To see that the level-wise cofibrations in symmetric spectra of pointed simplicial sets are retractile, note that the level-wise cofibrations are simply the levelwise cofibrations of pointed simplicial sets, hence levelwise monomorphisms of pointed simplicial sets, and monomorphisms are retractile in any category. If $f:A\rightarrow B$ is a flat cofibration in symmetric spectra then the pushout product $f\square -$ preserves level-wise cofibrations, so in particular all flat cofibrations are level-wise cofibrations and for any flat cofibrant object $X$, the functor $X\wedge -$ preserves level-wise cofibrations (by Proposition 2.8 of~\cite{schwedebook}, for example). 

\section{Construction of the spectral sequence.} \label{construction of ss section}
\subsection{Filtered commutative monoids and associated graded commutative monoids.}
\begin{definition}\label{def of dec filt obj}
By a {\em cofibrant decreasingly filtered object in $\mathcal{C}$} we mean
 a sequence of cofibrations in $\mathcal{C}$
\[ \dots \stackrel{f_3}{\longrightarrow} I_2 \stackrel{f_2}{\longrightarrow} I_1 \stackrel{f_1}{\longrightarrow} I_0, \]
such that each object $I_i$ is cofibrant.
\end{definition}

\begin{definition}\label{def of dec filt comm mon}
By a {\em cofibrant decreasingly filtered commutative monoid in $\mathcal{C}$} we mean: a cofibrant decreasingly filtered object
$ \dots \stackrel{f_3}{\longrightarrow} I_2 \stackrel{f_2}{\longrightarrow} I_1 \stackrel{f_1}{\longrightarrow} I_0 $
in $\mathcal{C}$,
and for every pair of natural numbers $i,j\in\mathbb{N}$, a map in $\mathcal{C}$
\[ \rho_{i,j}\co  I_i\Smash I_j \rightarrow I_{i+j},\]
and a map $\eta\co  \mathbbm{1}\rightarrow I_0$,
satisfying the axioms listed below. For the sake of listing the axioms concisely, it will be useful to have the following notation:
if $i^{\prime}\leq i$, we will write $f_i^{i^{\prime}}\co  I_i\rightarrow I_{i^{\prime}}$ for the composite
\begin{equation}\label{composite} f_i^{i^{\prime}} = f_{i^{\prime}+1} \circ f_{i^{\prime}+2} \circ \dots \circ f_{i-1}\circ f_i.\end{equation}

Here are the axioms we require:
\begin{description}[before={\renewcommand\makelabel[1]{\bfseries ##1.}}]
\item[{\bf Compatibility}] For all $i,j,i^{\prime},j^{\prime}\in \mathbb{N}$ with $i^{\prime}\leq i$ and $j^{\prime}\leq j$, the following diagram commutes:
\[\xymatrix{
I_i \Smash I_j \ar[d]_{f_i^{i^{\prime}}\Smash f_j^{j^{\prime}}} \ar[r]^{\rho_{i,j}} & I_{i+j}\ar[d]^{f_{i+j}^{i^{\prime}+j^{\prime}}} \\
I_{i^{\prime}}\Smash I_{j^{\prime}} \ar[r]^{\rho_{i^{\prime} , j^{\prime}}} & I_{i^{\prime} + j^{\prime}}.}\]

\item[{\bf Commutativity}] For all $i,j\in \mathbb{N}$, the diagram
\[\xymatrix{ I_i\Smash I_j \ar[d]_{\chi_{I_i,I_j}}\ar[rd]^{\rho_{i,j}} & \\
I_j\Smash I_i \ar[r]_{\rho_{j,i}} & I_{i+j} }\]
commutes, where $\chi_{I_i,I_j}\co  I_i\Smash I_j \stackrel{\cong}{\longrightarrow} I_j\Smash I_i$
is the symmetry isomorphism in $\mathcal{C}$.
\item[{\bf Associativity}] For all $i,j,k\in\mathbb{N}$, the following diagram commutes:
\[\xymatrix{ I_i\Smash I_j\Smash I_k \ar[r]^{\id_{I_i}\Smash \rho_{j,k}} \ar[d]_{\rho_{i,j}\Smash \id_{I_k}} & I_i\Smash I_{j+k}\ar[d]^{\rho_{i,j+k}} \\
I_{i+j}\Smash I_k\ar[r]_{\rho_{i+j,k}} & I_{i+j+k} .}\]

\item [{\bf Unitality}] For all $i\in \mathbb{N}$, the diagram
\[\xymatrix{ \mathbbm{1}\Smash I_i\ar[rd]^{\cong} \ar[d]_{\eta\Smash \id_{I_i}} & \\ I_0\Smash I_i\ar[r]_{\rho_{0,i}} & I_i }\]
commutes, where the map marked $\cong$ is the (left-)unitality isomorphism in $\mathcal{C}$.
 \item [{\bf Cofibrant in degree 0}] The commutative monoid $I_0$ is cofibrant in $\Comm(\mathcal{C})$.
\item [{\bf Cofibrancy of degree 0 quotient}] The composite map $S\rightarrow I_0\to I_0/I_1$ is a level-wise cofibration in $\mathcal{C}$.
\end{description}
\end{definition}
Note that, in the ``Cofibrancy of degree 0 quotient'' condition, we do not require that $I_0/I_1$ be cofibrant in $\Comm(\mathcal{C})$, but only that the map $S\rightarrow I_0\rightarrow I_0/I_1$ is a level-wise cofibration in $\mathcal{C}$. We hope that Observation~\ref{observation on cofibrancy conditions} and Remark~\ref{remark on cofibrancy conditions}  will be helpful to the reader who is wondering about the role of the ``Cofibrancy of degree 0 quotient'' condition.

\begin{remark}\label{smith ideals remark}
Note that, if $I_{\bullet}$ is a cofibrant decreasingly filtered commutative monoid in $\mathcal{C}$, then 
$I_0$ is a cofibrant commutative monoid in $\mathcal{C}$, with multiplication map
$\rho_{0,0}\co  I_0\Smash I_0\rightarrow I_0$ and unit map $\eta\co  \mathbbm{1}\rightarrow I_0$.
The objects $I_i$ for $i>0$ do not receive commutative monoid structures from the structure
of $I_{\bullet}$, but instead play a role analogous 
to that of the nested sequence of powers of an ideal in a commutative ring. If we neglect the commutativity axiom from Definition~\ref{def of dec filt comm mon}, then the special case $\dots \stackrel{\id}{\longrightarrow} I \stackrel{\id}{\longrightarrow} I \rightarrow R$ of Definition~\ref{def of dec filt comm mon} recovers the definition of a Smith ideal. Leaving the commutativity axiom in Definition~\ref{def of dec filt comm mon} intact, we recover the notion of a commutative Smith ideal, as studied in \cite{MR3666740}.
\end{remark}

\begin{definition} \label{def of Hausdorff filt}
Suppose $I_{\bullet}$ is a cofibrant decreasingly filtered commutative monoid in $\mathcal{C}$. 
We shall say that $I_{\bullet}$ is {\em Hausdorff} if 
$\holim_n I_n\simeq 0$.
We shall say that $I_{\bullet}$ is {\em finite} if there exists
some $n\in\mathbb{N}$ such that $f_{m}\co  I_m\rightarrow I_{m-1}$ is a
weak equivalence for all $m>n$. 
\end{definition}

\begin{remark} \label{rem filt comm mon}
Definition~\ref{def of dec filt comm mon} has the advantage of concreteness, but if we are willing to temporarily neglect the cofibrancy assumption in Definition~\ref{def of dec filt comm mon}, 
then there is an equivalent, more concise definition of a decreasingly filtered commutative monoid. Observe that the the data of a decreasingly filtered commutative monoid is the same as the data of a lax symmetric monoidal functor
$ I_{\bullet}\co\mathbb{N}^{\op} \longrightarrow \mathcal{C},$
where $\mathbb{N}^{\op}$ is the opposite category of $\mathbb{N}$, viewed as a partially ordered set, and equipped with a symmetric monoidal structure with addition as the symmetric monoidal product and $0$ as the unit. 

Recall that due to Day~\cite{Day}, the full subcategory of lax symmetric monoidal functors in $\mathcal{C}^{\mathbb{N}^{\op}}$ is equivalent to the category $\Comm \mathcal{C}^{\mathbb{N}^{\op}}$ of commutative monoid objects in the symmetric monoidal category $(\mathcal{C}^{\mathbb{N}^{\op}}, \otimes_{\text{Day}}, \mathbbm{1}_{\text{Day}})$ where $\otimes_{\text{Day}}$ is the (unenriched) Day convolution symmetric monoidal product and $\mathbbm{1}_{\text{Day}}$ is the unit of this symmetric monoidal product (see Day \cite{Day} for these constructions). In sum, specifying the data of a decreasingly filtered commutative monoid, without the cofibrancy condition, is the same as specifying an object in $\Comm \mathcal{C}^{\mathbb{N}^{\op}}$. 
\end{remark}
Remark \ref{rem filt comm mon} does not address the cofibrancy conditions needed for an object in the category $\Comm \mathcal{C}^{\mathbb{N}^{\op}}$ to be a cofibrant decreasingly filtered commutative monoid in $\mathcal{C}$ in the sense of Definition \ref{def of dec filt obj}. We will discuss this in detail in Section \ref{convenient model cat for filtered objects}. 
\begin{definition} {\bf (The associated graded monoid.)} \label{assoc graded monoid}
Let $I_{\bullet}$ be a cofibrant decreasingly filtered commutative monoid in $\mathcal{C}$.
By $E_0^*I_{\bullet}$, the {\em associated graded commutative monoid of $I_{\bullet}$}, we mean 
the graded commutative monoid object in $\mathcal{C}$ defined as follows: as an object of $\mathcal{C}$,  
\[ E_0^*I_{\bullet} =\coprod_{n\in\mathbb{N}} I_n/I_{n+1}. \]
The unit map $\mathbbm{1}\rightarrow E_0^*I_{\bullet}$ is the composite
$ \mathbbm{1}\stackrel{\eta}{\longrightarrow} I_0 \rightarrow I_0/I_1\hookrightarrow E_0^*I_{\bullet} .$

The multiplication on $E_0^*I_{\bullet}$ is given as follows. 
Since the smash product commutes with colimits, hence with coproducts, to specify a map
$ E_0^*I_{\bullet} \Smash E_0^*I_{\bullet}\rightarrow E_0^*I_{\bullet}$
it suffices to specify a component map 
\[ \nabla_{i,j}\co  I_i/I_{i+1}\Smash I_j/I_{j+1}\rightarrow E_0^*I_{\bullet}\]
for every $i,j\in\mathbb{N}$.
We define such a map $\nabla_{i,j}$ as follows:
first, we have the commutative square
\[\xymatrix{ I_{i+1}\Smash I_j \ar[r]^{\rho_{i+1,j}} \ar[d]_{f_{i+1}\Smash \id_{I_j}} & I_{i+j+1}\ar[d]^{f_{i+j+1}} \\
I_i\Smash I_j \ar[r]^{\rho_{i,j}} & I_{i+j} }\]
so, since the vertical maps are cofibrations by Definition \ref{def of dec filt comm mon}, 
we can take vertical cofibers to get a map
\[ \tilde{\nabla}_{i,j}\co  I_i/I_{i+1}\Smash I_j\rightarrow I_{i+j}/I_{i+j+1}, \]
which is well-defined by Running Assumption \ref{ra:1}.

Now we have the commutative diagram
\[\xymatrix{ I_{i+1}\Smash I_{j+1}\ar[rd]^{\id_{I_{i+1}}\Smash f_{j+1}}\ar[dd]^{f_{i+1}\Smash \id_{I_{j+1}}}\ar[rr]^{\rho_{i+1,j+1}} & & I_{i+j+2}\ar[rd]^{f_{i+j+2}}\ar[dd]^(.7){f_{i+j+2}} & \\
 & I_{i+1}\Smash I_j \ar[dd]^(.7){f_{i+1}\Smash \id_{I_j}} \ar[rr]^(.35){\rho_{i+1,j}} & & I_{i+j+1}\ar[dd]^{f_{i+j+1}} \\
I_i\Smash I_{j+1}\ar[rd]^{\id_{I_i}\Smash f_{j+1}} \ar[rr]^(.4){\rho_{i,j+1}} \ar[dd] & & I_{i+j+1}\ar[rd]^{f_{i+j+1}}\ar[dd] & \\
 & I_i\Smash I_j\ar[rr]^(.4){\rho_{i,j}} \ar[dd] & & I_{i+j} \ar[dd] \\
I_i/I_{i+1}\Smash I_{j+1} \ar[rd]_{\id_{I_i/I_{i+1}}\Smash f_{j+1}} \ar[rr]^(.4){\tilde{\nabla}_{i,j+1}} & & I_{i+j+1}/I_{i+j+2}\ar[rd]^{0} & \\
 & I_i/I_{i+1}\Smash I_j \ar[rr]^{\tilde{\nabla}_{i,j}} & & I_{i+j}/I_{i+j+1} }\]
in which the columns are cofiber sequences.
So we have a factorization of the composite map $\tilde{\nabla}_{i,j}\circ \left( \id_{I_i/I_{i+1}}\Smash f_{j+1}\right)$
through the zero object by Running Assumption \ref{ra:1} 
Thus, we have the commutative square
\[\xymatrix{ I_i/I_{i+1}\Smash I_{j+1}\ar[d]^{\id_{I_i/I_{i+1}}\Smash f_{j+1}} \ar[r] & 0 \ar[d] \\
I_i/I_{i+1}\Smash I_j \ar[r]^{\tilde{\nabla}_{i,j}} & I_{i+j}/I_{i+j+1} }\]
and, taking vertical cofibers, a map
\[ I_i/I_{i+1}\Smash I_j/I_{j+1}\rightarrow I_{i+j}/I_{i+j+1},\]
which we compose with the inclusion map
$I_{i+j}/I_{i+j+1}\hookrightarrow E_0^*I_{\bullet}$ 
to produce
our desired map $\nabla_{i,j}\co  I_i/I_{i+1}\Smash I_j/I_{j+1}\rightarrow E_0^*I_{\bullet}$. (Note that all these maps are defined in the model category $\mathcal{C}$, not just in Ho($\mathcal{C}$).)

This produces a multiplication map 
$E_0^*I_{\bullet} \Smash E_0^*I_{\bullet}\rightarrow E_0^*I_{\bullet}$
that, together with the unit map  $\mathbbm{1}\rightarrow E_0^*I_{\bullet}$, satisfies the necessary commutativity, associativity, and unitality conditions to make $E_0^*I_{\bullet}$ a commutative monoid object in $\mathcal{C}$, by construction. 
\end{definition} 

\subsection{Tensoring and pretensoring over simplicial sets.}
We will write $f\Sets$ for the category of finite sets.
First we introduce the {\em pretensor product}, which is merely a convenient notation for the well-known ``Loday construction'' of~\cite{MR981743}:
\begin{definition}\label{def of tensoring}
We define a functor
\[ - \tilde{\otimes} -\co sf\Sets \times \Comm\mathcal{C}\rightarrow 
s\Comm\mathcal{C},\]
which we call the {\em pretensor product}, as follows.
If $X_{\bullet}$ is a simplicial finite set and
$A$ a commutative monoid in $\mathcal{C}$,
the simplicial commutative monoid $X_{\bullet}\tilde{\otimes} A$
is given by: 

For all $n\in\mathbb{N}$, the $n$-simplex object
$ (X_{\bullet} \tilde{\otimes} A)_n  =
 \coprod_{x\in X_n} A $
is a coproduct, {\em taken in $\Comm(\mathcal{C})$},
of copies of $A$, with one copy for each $n$-simplex $x\in X_n$. Recall that the categorical coproduct in $\Comm(\mathcal{C})$ is the smash product $\Smash$. 

For all positive $n\in\mathbb{N}$ and all $0\leq i\leq n$, the $i$-th face map
\[ d_i\co(X_{\bullet} \tilde{\otimes} A)_n \rightarrow 
(X_{\bullet} \tilde{\otimes} A)_{n-1}\]
is given on the component corresponding to an $n$-simplex $x\in X_n$ by the map
\[ A \rightarrow \coprod_{y\in X_{n-1}} A\]
which is inclusion of the coproduct factor corresponding to the $(n-1)$-simplex
$\delta_i(x)$.

For all positive $n\in\mathbb{N}$ and all $0\leq i\leq n$, the $i$-th degeneracy map
\[ s_i\co(X_{\bullet} \tilde{\otimes} A)_n \rightarrow 
(X_{\bullet} \tilde{\otimes} A)_{n+1}\]
is given on the component corresponding to an $n$-simplex $x\in X_n$ by the map
\[ A \rightarrow \coprod_{y\in X_{n+1}} A\]
which is inclusion of the coproduct factor corresponding to the $(n+1)$-simplex
$\sigma_i(x)$.

We have defined the pretensor product on objects; it is then 
defined on morphisms in the evident way.

We define the {\em tensor product}
\[ - \otimes -\co sf\Sets \times \Comm\mathcal{C}\rightarrow 
\Comm\mathcal{C}\]
as the geometric realization of the pretensor product:
\[ X_{\bullet}\otimes A = \left| X_{\bullet}\tilde{\otimes} A\right| .\]
\end{definition}
It is easy to check that $X_{\bullet}\tilde{\otimes} A$ is indeed a simplicial
object in $\Comm(\mathcal{C})$.

When $\mathcal{C}$ is the category of symmetric spectra,
the tensor product $X_{\bullet}\otimes A$ agrees with the tensoring
of commutative ring spectra over simplicial sets. (This is proven in~\cite{MR1473888}, although using (an early incarnation of) $S$-modules \cite{MR1417719}, rather than symmetric spectra; the symmetric monoidal Quillen equivalence of $S$-modules and symmetric spectra, as in~\cite{MR1819881}, then gives us the same result in symmetric spectra.)
The same is true when $E$ is a commutative $S$-algebra and
$\mathcal{C}$ is the category of $E$-modules.
In fact, the tensor product defined in Definition~\ref{def of tensoring} 
agrees
with the tensoring over simplicial sets in every case 
of a symmetric monoidal model category whose category of
commutative monoids is tensored over simplicial sets
that is known to the authors.

In particular, when $X_{\bullet}$ is the minimal simplicial model for the circle $S^1_{\bullet}$,
then $S^1_{\bullet}\tilde{\otimes} A$ is the cyclic bar construction on $A$,
and hence (by the main result of \cite{MR1473888}) 
$S^1_{\bullet}\otimes A$ agrees with the topological Hochschild homology
ring spectrum $THH(A,A)$.

For other simplicial sets, $X_{\bullet}\otimes A$ is regarded as a generalization
of topological Hochschild homology, eg 
as ``higher order Hochschild homology'' in \cite{MR1755114}. 

\begin{observation}\label{observation on cofibrancy conditions}
Suppose $I_{\bullet}$ is a cofibrant decreasingly filtered commutative monoid, in the sense of Definition~\ref{def of dec filt comm mon}, and suppose that $X_{\bullet}$ is a simplicial finite set. 
The assumptions made in Definition~\ref{def of dec filt comm mon}, particularly the ``Cofibrancy of degree 0 quotient'' assumption, together with Running Assumption~\ref{ra:2}, ensures that $X_{\bullet}\tilde{\otimes} E_0^*I_{\bullet}$ is Reedy-cofibrant as a simplicial object of $\mathcal{C}$ (but {\em not} as a simplicial object of $\Comm(\mathcal{C})$). The argument is as follows: each $I_{n+1}\rightarrow I_n$ is a cofibration in $\mathcal{C}$, so the pushout map $0\rightarrow  I_n/I_{n+1}$ is a cofibration, so each $I_n/I_{n+1}$ is cofibrant. So for each $m$, $X_{m}\tilde{\otimes} E_0^*I_{\bullet}$ is a coproduct of cofibrant objects, hence $X_{m}\tilde{\otimes} E_0^*I_{\bullet}$ is cofibrant for each $m$. So, if we know that each degeneracy map in $X_{\bullet}\tilde{\otimes} E_0^*I_{\bullet}$ is a level-wise cofibration (in the sense of Running Assumption \ref{ra:1}), then $X_{\bullet}\tilde{\otimes} E_0^*I_{\bullet}$ is Reedy-cofibrant in $\mathcal{C}^{\Delta^{\op}}$ by Running Assumption \ref{ra:2}. The degeneracy maps in $X_{\bullet}\tilde{\otimes} E_0^*I_{\bullet}$ are smash products of coproducts of copies of the map $0 \rightarrow I_n/I_{n+1}$ for $n>0$ and copies of the composite map $S \rightarrow I_0 \rightarrow I_0/I_1$, so the ``Cofibrancy of degree 0 quotient'' condition from Definition~\ref{def of dec filt comm mon} is exactly what is necessary to ensure that the degeneracy maps are level-wise cofibrations and hence that $X_{\bullet}\tilde{\otimes} E_0^*I_{\bullet}$ is Reedy-cofibrant in $\mathcal{C}^{\Delta^{\op}}$.
\end{observation}

\begin{remark} \label{remark on cofibrancy conditions}
Because of Observation~\ref{observation on cofibrancy conditions}, we claim that when $I_{\bullet}$ is a cofibrant decreasingly filtered commutative monoid, the spectral sequence of Definition~\ref{def of thh-may ss} that we will construct using the pretensor product has good homotopical properties, {\em even though} the assumptions in Definition~\ref{def of dec filt comm mon} are not enough to guarantee that $E_0^*I_{\bullet}$ is cofibrant in $\Comm(\mathcal{C})$.
For the sake of the spectral sequence of Definition~\ref{def of thh-may ss} (ie, the central motivating construction in this paper), it is enough to know that $X_{\bullet}\otimes E_0^*I_{\bullet}$ has the correct homotopy type, ie, that generalized homologies of $X_{\bullet}\otimes E_0^*I_{\bullet}$ are computable from those of $\pi_*\left( E_0^*I_{\bullet}\right)$ by the usual methods (eg the B\"{o}kstedt spectral sequence) one uses in order to compute $THH$ or its higher-order variants.

For that purpose, it is enough to know that $X_{\bullet}\tilde{\otimes} A$ is Reedy-cofibrant {\em as a simplicial object of $\mathcal{C}$}, not necessarily as a simplicial object of $\Comm(\mathcal{C})$. This is because Reedy-cofibrancy of $X_{\bullet}\tilde{\otimes} A$ as a simplicial object of $\mathcal{C}$ is enough to give us a Bousfield-Kan-type spectral sequence with $E_2$-term the homology of the alternating sign chain complex obtained by applying a generalized homology theory $E_*$ to $X_{\bullet}\tilde{\otimes} A$, and which converges to $E_*\left| X_{\bullet}\tilde{\otimes} A\right| = E_*\left( X_{\bullet}\otimes A\right)$ (the B\"{o}kstedt spectral sequence is the special case $E = H\mathbb{F}_p$); and it is enough to tell us that the geometric realization $\left| X_{\bullet}\tilde{\otimes} A\right| = X_{\bullet}\otimes A$ is a model for the homotopy colimit of the functor $\Delta^{\op}\rightarrow \mathcal{C}$ given by $X_{\bullet}\tilde{\otimes} A$. See Chapter~XII of~\cite{MR0365573} for a classical account of these ideas; we also provide some details and discussion in the present paper in Theorem~\ref{homological bkss existence thm}. 

More is true, however: suppose that $I_{\bullet}$ is a cofibrant decreasingly filtered commutative monoid, and suppose that we are not satisfied by $E_0^*I_{\bullet}$'s lack of cofibrancy in $\Comm(\mathcal{C})$. Let $cE_0^*I_{\bullet}$ be a cofibrant commutative monoid in $\mathcal{C}$ and let $cE_0^*I_{\bullet} \rightarrow E_0^*I_{\bullet}$ be a weak equivalence in $\Comm(\mathcal{C})$. Then \begin{equation}\label{map of simplicial things} X_{\bullet} \tilde{\otimes} cE_0^*I_{\bullet} \rightarrow X_{\bullet} \tilde{\otimes} E_0^*I_{\bullet}\end{equation} is a Reedy weak equivalence (in $\Comm(\mathcal{C})^{\Delta^{\op}}$ as well as in $\mathcal{C}^{\Delta^{\op}}$) whose domain and codomain are both Reedy-cofibrant in $\mathcal{C}^{\Delta^{\op}}$, so~\eqref{map of simplicial things} induces a weak equivalence (in $\mathcal{C}$) on geometric realizations, by the famous Theorem~D in Reedy's thesis~\cite{reedy}. So, as long as $I_{\bullet}$ is a cofibrant decreasingly filtered commutative monoid, the homotopy type of $X_{\bullet}\otimes E_0^*I_{\bullet}$ in $\mathcal{C}$ is not affected by the failure of $E_0^*I_{\bullet}$ to be cofibrant in $\Comm(\mathcal{C})$.
\end{remark}

\subsection{The fundamental theorem of the May filtration.}\label{fund thm section}
The fundamental theorem of the May filtration relies on the following lemma. 
\begin{lemma}\label{n=2 case of hard lemma}
Suppose $I,J$ are objects of $\mathcal{C}$ and
$f\co I^{\prime}\rightarrow I$ and $g\co J^{\prime}\rightarrow J$ are cofibrations.
Suppose $I,J,I^{\prime},J^{\prime}$ are cofibrant.
Let $P=I\Smash J^{\prime}\coprod_{I^{\prime}\Smash J^{\prime}} I^{\prime}\Smash J$ denote the pushout (which, by Running Assumption \ref{ra:1}, is a homotopy pushout).
Let 
$f\square g\co P\rightarrow I\Smash J$
denote the canonical map given by the universal property of the pushout, known as the pushout product. Then $f\square g$ is a cofibration by the pushout product axiom in the definition of a monoidal model category, as in ~\cite{MR1734325} and the cofiber of $f\square g$ is isomorphic to $(I/I^{\prime})\Smash (J/J^{\prime})$.
So the following sequence is a cofiber sequence:
\begin{equation}\label{useful cof seq} P \stackrel{f}{\longrightarrow} I\Smash J \rightarrow (I/I^{\prime})\Smash (J/J^{\prime}). \end{equation}
\end{lemma}
\begin{proof}
This lemma occurs as Lemma~4.7 in May~\cite{MR1867203} and its proof is easily generalized to a general model category $\mathcal{C}$ satisfying Running Assumption \ref{ra:1}. 
\end{proof}

We now define some categories and functors that will be important for Definition \ref{def of may filt 0 4}.  If $S$ is a finite set, we will equip the set, $\mathbb{N}^S$, of functions from $S$ to $\mathbb{N}$
with the $L^1$-norm, that is, $\left| x\right| = \sum_{s\in S}x(s)$, and 
with the strict direct product order, that is, 
$x \leq y$ in $\mathbb{N}^S$ if and only if $x(s)\leq y(s)$ for all $s\in S$. If $T \stackrel{f}{\longrightarrow} S$ is a function between finite sets,
let $\mathbb{N}^T \stackrel{\mathbb{N}^f}{\longrightarrow} \mathbb{N}^S$
be the function of partially-ordered sets defined by
\[ \left(\mathbb{N}^f(x)\right)(s) = \sum_{\{ t\in T\co f(t) = s\}} x(t) .\]

If $S$ is a finite set, for each $n\in\mathbb{N}$ we will let $\mathcal{D}^S_n$ be the 
sub-poset of $\mathbb{N}^S$ consisting of all functions $x\in\mathbb{N}^S$
such that $\left| x\right|  \geq n$. If $T \stackrel{f}{\longrightarrow} S$ is a function between finite sets,
let $\mathcal{D}^T_n \stackrel{\mathcal{D}^f_n}{\longrightarrow} \mathcal{D}^S_n$
be the function of partially-ordered sets defined by
restricting $\mathbb{N}^f$ to $\mathcal{D}^T_n$.

For each $x\in \mathbb{N}^S$ and each $n\in \mathbb{N}$, let $\mathcal{D}^S_{n; x}$ denote
the following sub-poset of $\mathbb{N}^S$:
\begin{equation}\label{sub-poset}  \mathcal{D}^S_{n; x} = \left\{ y \in \mathbb{N}^S\co y \geq x, \mbox{\ and\ } \left| y\right| \geq n + \left| x\right| \right\} .\end{equation}
So, for example, $\mathcal{D}^S_{n; \vec{0}} = \mathcal{D}^S_n$,
where $\vec{0}$ is the constant zero function.
If $T \stackrel{f}{\longrightarrow} S$ is a function between finite sets
and $x\in \mathbb{N}^T$ and $n\in\mathbb{N}$,
let $\mathcal{D}^T_{n;x} \stackrel{\mathcal{D}^f_{n;x}}{\longrightarrow} \mathcal{D}^S_{n;\mathcal{D}^f_n(x)}$
be the function of partially-ordered sets defined by
restricting $\mathbb{N}^f$ to $\mathcal{D}^T_{n;x}$.

Let $S$ be a finite set and let $n$ be a nonnegative integer.
We write $\mathcal{E}^S_{n,k}$ for the set 
\begin{equation}\label{def of may filt 0 3} \mathcal{E}^S_{n,k} = \left\{ x\in \{ 0,1, \dots , n\}^{S}\co \left| x\right| \geq k \right\}. \end{equation}

When $n=k$, we simply write $\mathcal{E}^S_{n}$ for this partially ordered set. The definition of $\mathcal{E}^S_n$ is natural in $S$ in the following sense:
if $T\stackrel{f}{\longrightarrow} S$ is a injective map of finite sets, then $\mathbb{N}^f$ naturally restricts to a function $\mathcal{E}^T_n\rightarrow \mathcal{E}^S_n$. 

\begin{definition} \label{def of may filt 0 4}
Suppose $I_{\bullet}$ is a cofibrant decreasingly filtered object in $\mathcal{C}$ and suppose
$S$ is a finite set. We will let 
$ \mathcal{F}^S(I_{\bullet})\co \left(\mathbb{N}^S\right)^{\op}\rightarrow \mathcal{C}$
be the functor sending $x$ to the smash product
$\Smash_{s\in S} I_{x(s)},$
and defined on morphisms in the apparent way, and we will let 
$ \mathcal{F}^S_n(I_{\bullet})\co \left(\mathcal{D}^S_n\right)^{\op} \hookrightarrow
\left(\mathbb{N}^S\right)^{\op}\stackrel{\mathcal{F}^S(I_{\bullet})}{\longrightarrow} 
\mathcal{C}$
be the functor which is the composite of $\mathcal{F}^S(I_{\bullet})$ with the inclusion
of $(\mathcal{D}^S_n)^{\op}$ into $(\mathbb{N}^S)^{\op}$ as a subcategory. 

If $x\in \mathcal{D}^S_n$, we will write $\mathcal{F}^S_{n; x}(I_{\bullet})$ for the restriction of the diagram
$\mathcal{F}^S(I_{\bullet})$ to $\mathcal{D}^S_{n; x}$, that is, $\mathcal{F}^S_{n; x}(I_{\bullet})$ is the composite
$\mathcal{F}^S_{n; x}(I_{\bullet})\co(\mathcal{D}^S_{n; x})^{\op} \hookrightarrow \left(\mathbb{N}^S\right)^{\op}\stackrel{\mathcal{F}^S(I_{\bullet})}{\longrightarrow} 
\mathcal{C}.$
Finally, let $\mathcal{M}^S_n(I_{\bullet})$ denote the colimit
$ \mathcal{M}^S_n(I_{\bullet}) = \colim \left(\mathcal{F}^S_n(I_{\bullet})\right)$
in $\mathcal{C}$. By the natural inclusion of
$\mathcal{D}^S_n$ into $\mathcal{D}^S_{n-1}$ as a subcategory, we now have a sequence in $\mathcal{C}$:
\begin{equation}\label{may filt diagram} \dots \rightarrow \mathcal{M}^S_3(I_{\bullet}) \rightarrow \mathcal{M}^S_2(I_{\bullet})
 \rightarrow \mathcal{M}^S_1(I_{\bullet}) \rightarrow \mathcal{M}^S_0(I_{\bullet}) \cong \Smash_{s\in S} I_0.\end{equation}
We refer to the functor $\mathbb{N}^{\op}\rightarrow \mathcal{C}$ given by
sending $n$ to $\mathcal{M}^S_n(I_{\bullet})$ as the {\em May filtration on $\Smash_{s\in S}I_0$}.

The May filtration is functorial in $S$ in the following sense:
if $T\stackrel{f}{\longrightarrow} S$ is a map of finite sets, 
we have a functor 
\begin{eqnarray*} 
\mathcal{D}^f_n\co \mathcal{D}^{T}_n & \rightarrow & \mathcal{D}^S_n \\
\left(\mathcal{D}^f_n(x)\right)(s) & \mapsto & \sum_{\left\{ t\in T\co f(t) = s\right\}} x(t)
\end{eqnarray*}
and a map of diagrams from
$\mathcal{F}^T_n(I_{\bullet})$ to $\mathcal{F}^S_n(I_{\bullet})$ given by 
sending the object 
$ \mathcal{F}^T_n(I_{\bullet})(x) = \Smash_{t\in T}I_{x(t)}$
to the object 
$ \mathcal{F}^S_n(I_{\bullet})(\mathcal{D}^f_n(x)) = \Smash_{s\in S} I_{\Sigma_{\{ t\in T\co f(t) = s\}} x(t)}$
by the map
\[ \Smash_{t\in T}I_{x(t)} \rightarrow \Smash_{s\in S} I_{\Sigma_{\{ t\in T\co f(t) = s\}} x(t)}\]
given as the smash product, across all $s\in S$, of the maps
\[ \Smash_{\{ t\in T\co f(t) = s\}} I_{x(t)} \rightarrow I_{\Sigma_{\{ t\in T\co f(t) = s\}} x(t)}\]
given by multiplication via the maps $\rho$ of Definition~\ref{def of dec filt comm mon}.
\end{definition}
To really make Definition~\ref{def of may filt 0 4} precise, we should say in which order we multiply the factors $I_{x(t)}$ using the maps
$\rho$; but the purpose of the associativity and commutativity axioms in Definition~\ref{def of dec filt comm mon} is that any two such choices
commute, so any choice of order of multiplication will do.

\begin{definition}\label{def of may filt}
{\bf (Definition of the May filtration.)}
If $I_{\bullet}$ is a cofibrant decreasingly filtered commutative monoid in $\mathcal{C}$ 
and $X_{\bullet}$ a simplicial finite set, by the {\em May filtration on $X_{\bullet}\tilde{\otimes} I_0$} we mean the 
functor $\mathcal{M}^{X_{\bullet}}(I_{\bullet})\co \mathbb{N}^{\op} \rightarrow \mathcal{C}^{\Delta^{\op}}$
given by sending a natural number $n$ to the 
simplicial object $\mathcal{M}^{X_{\bullet}}_n(I_{\bullet})$ of $\mathcal{C}$, 
with $\mathcal{M}^{X_i}_n(I_{\bullet})$ defined as in Definition~\ref{def of may filt 0 4},
and with face and degeneracy maps defined as follows: The face map
$ d_i\co \mathcal{M}^{X_j}_n(I_{\bullet}) \rightarrow \mathcal{M}^{X_{j-1}}_n(I_{\bullet})$
is the colimit of the map of diagrams 
$ \mathcal{F}^{X_j}_n(I_{\bullet}) \rightarrow \mathcal{F}^{X_{j-1}}_n(I_{\bullet})$
induced, as in Definition~\ref{def of may filt 0 4}, by $\delta_i\co X_j\rightarrow X_{j-1}$. The degeneracy map
$ s_i\co \mathcal{M}^{X_j}_n(I_{\bullet}) \rightarrow \mathcal{M}^{X_{j+1}}_n(I_{\bullet})$
is the colimit of the map of diagrams 
$ \mathcal{F}^{X_j}_n(I_{\bullet}) \rightarrow \mathcal{F}^{X_{j+1}}_n(I_{\bullet})$
induced, as in Definition~\ref{def of may filt 0 4}, by $\sigma_i\co X_j\rightarrow X_{j+1}$.
\end{definition}

\begin{remark} \label{rem on structure}
Note that the associative, commutative, and unital multiplications on the objects $I_i$, via the maps $\rho$ of
Definition~\ref{def of dec filt comm mon}, also yield (by taking smash products of the maps
$\rho$) associative, commutative, and unital multiplication natural transformations 
\begin{equation}\label{action maps on F} \mathcal{F}^S_m(I_{\bullet})\Smash \mathcal{F}^S_n(I_{\bullet})\rightarrow\mathcal{F}^S_{m+n}(I_{\bullet}),\end{equation} hence, on taking colimits,
associative, commutative, and unital multiplication maps
\[\mathcal{M}^S_m(I_{\bullet})\Smash \mathcal{M}^S_n(I_{\bullet})\rightarrow\mathcal{M}^S_{m+n}(I_{\bullet}),\] ie,
the functor $\mathbb{N}^{\op} \rightarrow \mathcal{C}$ sending $n$ to $\mathcal{M}^S_n(I_{\bullet})$
is a cofibrant decreasingly filtered commutative monoid, in the sense of
Definition~\ref{def of dec filt comm mon}.
Note furthermore that, if $f\co T\rightarrow S$ is a map of finite sets, then
the induced maps
$\mathcal{F}^T_m(I_{\bullet}) \rightarrow \mathcal{F}^S_m(I_{\bullet})$
commute with the multiplication maps~\eqref{action maps on F}, and so
$\mathcal{M}^T_{\bullet}(I_{\bullet})\rightarrow\mathcal{M}^S_{\bullet}(I_{\bullet})$ is a map of 
cofibrant decreasingly filtered commutative monoids.

Consequently, for any simplicial finite set $X_{\bullet}$, we have that
$\mathcal{M}_{\bullet}^{X_{\bullet}}(I_{\bullet})$ is a simplicial object in the category of cofibrant decreasingly
filtered commutative monoids in $\mathcal{C}$.
Since geometric realization commutes with the monoidal product in $\mathcal{C}$
by our running assumptions on $\mathcal{C}$,
this in turn implies that the geometric realization
$\left| \mathcal{M}_{\bullet}^{X_{\bullet}}(I_{\bullet})\right|$ of $\mathcal{M}_{\bullet}^{X_{\bullet}}(I_{\bullet})$ is
a cofibrant decreasingly filtered commutative monoid in $\mathcal{C}$ by Running Assumption \ref{ra:2}. It can easily be shown that $\mathcal{M}_n^{X_{\bullet}}(I_{\bullet})$ satisfies Running Assumption \ref{ra:2}'s Item \eqref{it2} for each $n$ since $I_0$ is assumed cofibrant as an object in $\mathcal{C}$. Running Assumption \ref{ra:2}'s Item \eqref{it1} and Item \eqref{it3} are satisfied by definition of $\mathcal{M}_n^{X_{\bullet}}(I_{\bullet})$ and by definition of the maps $ \mathcal{M}_n^{X_{\bullet}}(I_{\bullet})\rightarrow \mathcal{M}_{n-1}^{X_{\bullet}}(I_{\bullet}). $
Therefore, the 
commutative monoid $\left| \mathcal{M}_{\bullet}^{X_{\bullet}}(I_{\bullet})\right|$ in $\mathcal{C}^{\mathbb{N}^{\op}}$ is a cofibrant decreasingly filtered commutative monoid in $\mathcal{C}$. (We remind the reader that, by the main theorem of the authors' paper \cite{161106215}, an example of a setting in which Running Assumption \ref{ra:2} holds is the category of symmetric spectra equipped with the positive flat stable model structure.)
\end{remark}
\begin{definition}\label{Kan ext}
Suppose $n\in \mathbb{N}$.
We have the canonical inclusion of categories $\iota\co \mathcal{D}^S_{n+1} \hookrightarrow  \mathcal{D}^S_n$.
We will write $\tilde{\mathcal{F}}_{n+1}^S(I_{\bullet})$ for the left Kan extension of 
$\mathcal{F}_{n+1}^S(I_{\bullet})\co (\mathcal{D}^S_{n+1})^{\op}\rightarrow \mathcal{C}$ along $\iota^{\op}$, ie,
if we write 
$ Kan\co \mathcal{C}^{(\mathcal{D}_{n+1}^S)^{\op}} \rightarrow \mathcal{C}^{(\mathcal{D}_{n}^S)^{\op}}$
for the left adjoint of the functor
$\mathcal{C}^{(\mathcal{D}_{n}^S)^{\op}} \rightarrow \mathcal{C}^{(\mathcal{D}_{n+1}^S)^{\op}}$
induced by $\iota$, then 
$ \tilde{\mathcal{F}}_{n+1}^S(I_{\bullet}) = Kan(\mathcal{F}_{n+1}^S(I_{\bullet})).$
By the universal property of this Kan extension, we have a canonical map $c\co \tilde{\mathcal{F}}_{n+1}^S(I_{\bullet}) \rightarrow \mathcal{F}_n^S(I_{\bullet})$.
\end{definition}
\begin{remark}
It is a elementary exercise in combinatorics to show that there are $\binom{n+\#(S)}{n}$ elements in the set $\{ x\in \mathbb{N}^S\co \left| x\right| = n\}$, where $\#(S)$ is the cardinality 
of $S$. This set indexes a coproduct in the following lemma. 
\end{remark}
\begin{lemma}\label{cofiber computation lemma}
Let $I_{\bullet}$ be a cofibrant decreasingly filtered object in $\mathcal{C}$ and let 
$S$ be a finite set. Then the cofiber of the map
\[ \colim \left( \tilde{\mathcal{F}}_{n+1}^S(I_{\bullet})\right) \stackrel{\colim c}{\longrightarrow} \colim \left(\mathcal{F}_n^S(I_{\bullet})\right), \]
computed in $\mathcal{C}$, is isomorphic to the coproduct in $\mathcal{C}$
\[ \coprod_{\{ x\in \mathbb{N}^S\co \left| x\right| = n\}} \left( \left( \Smash_{s\in S} I_{x(s)}\right)/\left( \colim \mathcal{F}_{1; x}^S(I_{\bullet})\right) \right) \]
(see Definition \ref{Kan ext} for the definition of the map $c$). This isomorphism is natural in the variable $S$.
\end{lemma}
\begin{proof}
One knows that the left Kan extension of $\mathcal{F}_{n+1}^S(I_{\bullet})$ agrees with $\mathcal{F}_{n+1}^S(I_{\bullet})$ wherever both are defined: given $x\in \mathcal{D}_{n+1}^S$, the pointwise formula for a Kan extension gives us that 
$\tilde{\mathcal{F}}^S_{n+1}(I_{\bullet})(x)$ is the colimit, over all $y\rightarrow x$ in $\mathcal{D}_{n+1}^S$, of $\mathcal{F}^S_{n+1}(I_{\bullet})(y)$. Since the identity map on $x$ is already in $\mathcal{D}_{n+1}^S$, we get that $\tilde{\mathcal{F}}_{n+1}^S(I_{\bullet})(x) \cong \mathcal{F}_{n+1}^S(I_{\bullet})(x)$. Hence, 
\begin{eqnarray*} \tilde{\mathcal{F}}_{n+1}^S\left(I_{\bullet}\right)\left( x\right) & \cong & \mathcal{F}_{n+1}^S\left(I_{\bullet}\right)\left( x\right) \\
 & = & \Smash_{s\in S} I_{x(s)}\end{eqnarray*}
for all $x\in \mathcal{D}_{n+1}^S \subseteq \mathcal{D}_{n}^S$.
The elements of $\mathcal{D}_{n}^S$ which are {\em not} in $\mathcal{D}_{n+1}^S$ are those
$x$ such that $\left| x\right| = n$, and by the pointwise formula for a Kan extension (see eg~\cite{MR1712872}) 
one knows that, for all $x$ such that $\left| x\right| = n$, we have an isomorphism of
$\tilde{\mathcal{F}}_{n+1}^S(I_{\bullet})\left( x\right)$ with the colimit of 
the values of $\mathcal{F}_{n+1}^S(I_{\bullet})$ over those elements of $(\mathcal{D}_{n+1}^S)^{\op}$ which map to 
$x$, ie, the colimit of the values of $\mathcal{F}_{n+1}^S(I_{\bullet})$ over 
$(\mathcal{D}_{1, x}^S)^{\op}\subseteq (\mathcal{D}_{n+1}^S)^{\op}$, ie,
$\colim \left(\mathcal{F}_{1, x}^S(I_{\bullet})\right)$.

For each $x\in \mathcal{D}^S_n$, the map $c(x)$ can be shown to be a cofibration by iterated use of the pushout product axiom, so the cofiber of $c(x)$ is a homotopy cofiber. By the previous paragraph the levelwise cofiber $\cof c\co  (\mathcal{D}_{n}^S)^{\op} \rightarrow\mathcal{C}$ of the natural transformation $c$ is given as follows:
\[ (\cof c)\left( x \right) \cong \left\{ \begin{array}{lll} 0 & \mbox{\ if\ } & \left| x\right| > n \\
 \left(\mathcal{F}_n^S(I_{\bullet})\right)/\left(\colim \left(\mathcal{F}_{1, x}^S(I_{\bullet})\right)\right) & \mbox{\ if\ } &
    \left| x\right| = n.\end{array}\right. \]
Hence, on taking colimits, we have
\begin{eqnarray*} \cof \colim c & \cong & \colim \cof c \\
 & = &  \coprod_{\{ x\in\mathbb{N}^S\co \left| x\right| = n\}} \left( \left( \Smash_{s\in S} I_{x(s)}\right)/\left( \colim \mathcal{F}_{1; x}^S(I_{\bullet})\right) \right) ,\end{eqnarray*} 
as claimed.
\end{proof}

\begin{lemma} \label{rather hard lemma} 
Suppose $S$ is a finite set
and suppose $Z_{s,1} \rightarrow Z_{s,0}$ is a cofibration for each $s\in S$.
Suppose the objects $Z_{s,1},Z_{s,0}$ are all cofibrant. 
Let $\mathcal{G}_S\co (\mathcal{E}^S_1)^{\op}\rightarrow \mathcal{C}$ be the functor given on objects by
$ \mathcal{G}_S(x) = \Smash_{s\in S} Z_{s,x(s)},$
and given on morphisms in the obvious way.

Then the smash product 
$ \Smash_{s\in S} Z_{s,0} \rightarrow \Smash_{s\in S} \left( Z_{s,0}/Z_{s,1}\right) $
of the cofiber projections $Z_{s,0} \rightarrow Z_{s,0}/Z_{s,1}$ fits into a cofiber sequence:
\[ \colim \mathcal{G}_S \rightarrow \Smash_{s\in S} Z_{s,0} \rightarrow \Smash_{s\in S} \left( Z_{s,0}/Z_{s,1}\right).\]
\end{lemma}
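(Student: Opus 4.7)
The plan is to proceed by induction on $|S|$. When $|S|=1$, the poset $\mathcal{E}^{\{s\}}_1$ consists of a single point, so $\colim \mathcal{G}_{\{s\}} = Z_{s,1}$ and the asserted sequence is just the defining cofiber sequence for $Z_{s,0}/Z_{s,1}$. When $|S|=2$, writing $S = \{s_0, s_1\}$, the pushout of $Z_{s_0,0}\smash Z_{s_1,1} \leftarrow Z_{s_0,1}\smash Z_{s_1,1} \to Z_{s_0,1}\smash Z_{s_1,0}$ is canonically $\colim \mathcal{G}_S$, so the claim reduces directly to Lemma~\ref{n=2 case of hard lemma}.

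For the inductive step, pick $s_0 \in S$ and let $S' = S\setminus\{s_0\}$, $T' = \colim \mathcal{G}_{S'}$, $W_0 = \smash_{s\in S'} Z_{s,0}$, and $Q' = \smash_{s\in S'}(Z_{s,0}/Z_{s,1})$. The inductive hypothesis produces a cofiber sequence $T' \to W_0 \to Q'$, so in particular $T' \to W_0$ is a cofibration by the convention on cofibers in this paper; the pushout-product axiom guarantees that $W_0$ is cofibrant, and iterated use of it (applied along the inductive construction of $T'$) shows that $T'$ is cofibrant as well. Now apply Lemma~\ref{n=2 case of hard lemma} to the two cofibrations $Z_{s_0,1}\to Z_{s_0,0}$ and $T'\to W_0$ to obtain a cofiber sequence
$$P \longrightarrow \smash_{s\in S} Z_{s,0} \longrightarrow (Z_{s_0,0}/Z_{s_0,1}) \smash (W_0/T'),$$
where $P$ is the pushout of $Z_{s_0,0}\smash T' \leftarrow Z_{s_0,1}\smash T' \to Z_{s_0,1}\smash W_0$. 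The right-hand term is $\smash_{s\in S}(Z_{s,0}/Z_{s,1})$ by the inductive hypothesis.

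It remains to identify $P$ with $\colim \mathcal{G}_S$, and for this I decompose $\mathcal{E}^S_1$ as the disjoint union $A \sqcup B$ where $A = \{x : x(s_0) = 0\} \cong \mathcal{E}^{S'}_1$ and $B = \{x : x(s_0) = 1\} \cong \{0,1\}^{S'}$. Because the smash product commutes with colimits, restriction of $\mathcal{G}_S$ gives $\colim \mathcal{G}_S|_A \cong Z_{s_0,0}\smash T'$, while the fact that the zero function is terminal in $(\{0,1\}^{S'})^{\op}$ yields $\colim \mathcal{G}_S|_B \cong Z_{s_0,1}\smash W_0$. The only morphisms in $(\mathcal{E}^S_1)^{\op}$ that connect $B$ to $A$ originate in $B_0 := \{y \in B : y|_{S'} \neq 0\} \cong \mathcal{E}^{S'}_1$ and land in $A$, and each such morphism factors through the element $(1, x|_{S'}) \in B_0$ attached to its target $x \in A$. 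A direct check of the universal property then presents $\colim \mathcal{G}_S$ as the pushout $Z_{s_0,0}\smash T' \cup_{Z_{s_0,1}\smash T'} Z_{s_0,1}\smash W_0 = P$, completing the proof. The main technical obstacle is this final combinatorial identification; everything else follows formally from Lemma~\ref{n=2 case of hard lemma} and the inductive hypothesis.
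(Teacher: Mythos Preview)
Your proof is correct and follows essentially the same strategy as the paper: induct on $|S|$, split off a chosen element $s_0$, apply Lemma~\ref{n=2 case of hard lemma} to the pair of cofibrations $Z_{s_0,1}\to Z_{s_0,0}$ and $\colim\mathcal{G}_{S'}\to \smash_{s\in S'}Z_{s,0}$, and then identify the resulting pushout with $\colim\mathcal{G}_S$. The only organizational difference is in that last identification: the paper reindexes through the product category $(\mathcal{E}_1^{S'})^{\op}\times\mathcal{PO}$ via an explicit pair of functors, whereas you decompose $\mathcal{E}^S_1$ according to the value at $s_0$ and verify the pushout universal property directly---these are two packagings of the same combinatorial fact.
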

\begin{proof}
If the cardinality of $S$ is one, the statement of the lemma is true by the definition of a cofiber.

The case in which the cardinality of $S$ is two is precisely Lemma~\ref{n=2 case of hard lemma}, already proven.

For the case in which the cardinality of $S$ is greater than two, we introduce a notation we will need to use: let $\mathcal{PO}$ denote the category indexing pushout diagrams, ie,
\begin{equation}\label{def of PO} \mathcal{PO} = \left( \vcenter{\xymatrix{ & [1^{\prime}] \ar[ld]\ar[rd] & \\ [1] & & [0] }}\right) ,\end{equation}
the symbols $[1^{\prime}],[1],[0]$ each representing an object, and the arrows each representing a morphism.
We observe that $\mathcal{PO}$ and $\mathcal{E}^S_1$ are not arbitrary small categories but are
in fact partially-ordered sets; this simplifies some of the arguments we will give
in the rest of the proof.

Suppose the cardinality of $S$ is greater than two. Choose an element $s_0\in S$. We will write $S^{\prime}$ for the 
complement, 
$ S^{\prime} = \{ s\in S\co s\neq s_0\}$,
of $s_0$ in $S$.
Define objects $X_1^{\prime},X_2^{\prime},Y_1^{\prime},Y_2^{\prime}$ in $\mathcal{C}$ as follows: 
$Y^{\prime}_1= \colim \mathcal{G}_{S^{\prime}}$, $X^{\prime}_1=\Smash_{s\in S^{\prime}} Z_{s,0}$, $Y^{\prime}_2=Z_{s_0,1}$ and $X^{\prime}_2=Z_{s_0,0}$.
Now we apply the statement of the lemma, in the (already proven, above) case $S = \{ 1,2\}$ and using
$X_1^{\prime},X_2^{\prime},Y_1^{\prime},Y_2^{\prime}$ in place of $Z_{1,0},Z_{2,0},Z_{1,1},Z_{2,1}$ to obtain a cofiber sequence
\begin{equation}\label{preliminary cof seq} \colim \mathcal{B} \rightarrow \Smash_{s\in S} Z_{s,0} \rightarrow \Smash_{s\in S} \left( Z_{s,0}/Z_{s,1}\right) ,\end{equation}
where $\mathcal{B}$ is the functor $\mathcal{PO}\rightarrow \mathcal{C}$ given by: 
\begin{eqnarray*}
\mathcal{B}\left( [1^{\prime}] \right) & = & \left( \colim \mathcal{G}_{S^{\prime}}\right) \Smash Z_{s_0,1} \\
\mathcal{B}\left( [1] \right) & = & \left( \Smash_{s\in S^{\prime}} Z_{s_0,0} \right) \Smash Z_{s_0,1} \\
\mathcal{B}\left( [0] \right) & = & \left( \colim \mathcal{G}_{S^{\prime}} \right) \Smash Z_{s_0,0} .
\end{eqnarray*}

By Lemma~\ref{n=2 case of hard lemma}, we know that the map $\colim \mathcal{G}_{S^{\prime}}\longrightarrow \Smash_{s\in S'} Z_{s,0}$ is a cofibration in the case $S=\{1,2\}$. Since $\colim \mathcal{B}$ is constructed as a pushout, the pushout product axiom ensures that the map $\colim \mathcal{B}\longrightarrow \Smash_{s\in S} Z_{s,0}$ is also a cofibration as long as $\colim \mathcal{G}_{S^{\prime}}\longrightarrow \Smash_{s\in S'} Z_{s,0}$ is a cofibration. It suffices to  show that $\colim\mathcal{B}\cong \colim \mathcal{G}_S$. This will show that the map $\colim \mathcal{G}_{S}\longrightarrow \Smash_{s\in S}Z_{s,0}$ is a cofibration and allow us to identify the cofiber, thus completing the induction on the cardinality of the set $S$. 
We can reindex, to describe $\colim\mathcal{B}$ as the colimit of a larger diagram $\mathcal{H}$:
\begin{eqnarray*} \mathcal{H}\co (\mathcal{E}_1^{S^{\prime}})^{\op}\times \mathcal{PO} & \rightarrow & \mathcal{C} \\
 \left( x,[1^{\prime}] \right) & \mapsto & \left( \Smash_{s\in S^{\prime}} Z_{s,x(s)}\right) \Smash Z_{s_0,1} \\
 \left( x,[1] \right) & \mapsto & \left( \Smash_{s\in S^{\prime}} Z_{s,0}\right) \Smash Z_{s_0,1} \\
 \left( x,[0] \right) & \mapsto & \left( \Smash_{s\in S^{\prime}} Z_{s,x(s)}\right) \Smash Z_{s_0,0} \end{eqnarray*}

We have a functor 
\begin{eqnarray*} P\co (\mathcal{E}_1^{S^{\prime}})^{\op}\times \mathcal{PO} & \rightarrow & (\mathcal{E}_1^S)^{\op} \\
 \left( x,[1^{\prime}] \right)(s) & = & \left\{ \begin{array}{lll} x(s) & \mbox{\ if\ } & s\neq s_0 \\ 1 & \mbox{\ if\ } & s = s_0 \end{array}\right. \\
 \left( x,[1] \right)(s) & = & \left\{ \begin{array}{lll} 0 & \mbox{\ if\ } & s\neq s_0 \\ 1 & \mbox{\ if\ } & s = s_0 \end{array}\right. \\
 \left( x,[0] \right)(s) & = & \left\{ \begin{array}{lll} x(s) & \mbox{\ if\ } & s\neq s_0 \\ 0 & \mbox{\ if\ } & s = s_0 \end{array}\right. .\end{eqnarray*}

Now we claim that the canonical map $\colim\mathcal{H}\rightarrow\colim\mathcal{G}_S$ given by $P$ is an isomorphism in $\mathcal{C}$. We define a functor 
\begin{eqnarray*} 
I\co (\mathcal{E}_1^S)^{\op} & \rightarrow & (\mathcal{E}_1^{S^{\prime}})^{\op}\times \mathcal{PO} \\
x & \mapsto & \left\{ \begin{array}{lll}
   \left(I^{\prime}(x),[1^{\prime}]\right) & \mbox{\ if\ } & x(s_0)= 1 \\
   \left(I^{\prime}(x),[0]\right) & \mbox{\ if\ } & x(s_0)= 0 \end{array}\right. \end{eqnarray*}
where $I^{\prime}\co (\mathcal{E}_1^S)^{\op}\rightarrow (\mathcal{E}_1^{S^{\prime}})^{\op}$ is the functor given by restriction, ie,
$I^{\prime}(x)(s) = x(s)$ for $s\in S'$.
Now we observe some convenient identities:
\begin{eqnarray*}
 (\mathcal{G}_S\circ P)(x,j) & = & \left\{ \begin{array}{lll} 
    \left( \Smash_{s\in S^{\prime}} Z_{s,x(s)}\right) \Smash Z_{s_0,1} & \mbox{\ if\ } & j = [1^{\prime}] \\
    \left( \Smash_{s\in S^{\prime}} Z_{s,x(s)}\right) \Smash Z_{s_0,0} & \mbox{\ if\ } & j = [0] \\
    \left( \Smash_{s\in S^{\prime}} Z_{s,0}\right) \Smash Z_{s_0,1} & \mbox{\ if\ } & j = [1] \end{array}\right. \\
  & = & \mathcal{H}(x,j), \mbox{\ and} \\
 (\mathcal{H}\circ I)(x) & = & \left\{ \begin{array}{lll}
    \left( \Smash_{s^{\prime}} Z_{s,x(s)}\right)  \Smash Z_{s_0,1} & \mbox{\ if\ } & x_{s_0} = 1 \\
    \left( \Smash_{s^{\prime}} Z_{s,x(s)}\right)  \Smash Z_{s_0,0} & \mbox{\ if\ } & x_{s_0} = 0\end{array}\right. \\
  & = & \mathcal{G}_S(x).\end{eqnarray*}
We conclude that $P,I$ give mutually inverse maps between $\colim \mathcal{G}_S$
and $\colim \mathcal{H}$, ie, 
$\colim \mathcal{G}_S\cong \colim\mathcal{H}$ and hence
$\colim \mathcal{G}_S\cong \colim\mathcal{B}$, as desired. So 
from cofiber sequence~\eqref{preliminary cof seq}, we have
a cofiber sequence
\[ \colim \mathcal{G}_S \rightarrow \Smash_{s\in S} Z_{s,0} \rightarrow \Smash_{s\in S} \left( Z_{s,0}/Z_{s,1}\right), \]
as desired.

From inspection of the colimit diagrams one sees that the cofiber sequence~\eqref{preliminary cof seq} does not depend on the choice of $s_0\in S$, and naturality in
$S$ follows.
\end{proof}

\begin{lemma}\label{cofinality lemma}
Let $S$ be a finite set, let $n$ be a positive integer, and let $x\in \mathbb{N}^S$. Let $\mathcal{E}_n^S$ and 
$\mathcal{D}_{n; x}^S$ be the partially-ordered sets defined in equations~\eqref{sub-poset} and \eqref{def of may filt 0 3}. Let $J_{n;x}^S$ be the functor (ie, morphism of partially-ordered sets) defined by
\begin{eqnarray*} J_{n;x}^S\co \mathcal{E}^S_n & \rightarrow & \mathcal{D}_{n; x}^S \\
(J_{n;x}(y))(s) & = & x(s) + y(s).\end{eqnarray*}
Then $J_{n;x}$ has a right adjoint. Consequently $J_{n;x}$ is a cofinal functor, ie, for any functor $F$ defined on $\mathcal{D}_{n; x}^S$ 
such that the limit $\lim F$ exists, the limit $\lim (F\circ J_{n;x}^S)$ also exists, and the canonical map
$\lim (F\circ J_{n;x}^S) \rightarrow \lim F$ is an isomorphism.
\end{lemma}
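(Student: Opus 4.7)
The plan is to exhibit the right adjoint $R = R_{n;x}^S : \mathcal{D}_{n;x}^S \to \mathcal{E}_n^S$ explicitly, verify the adjunction by a direct check on the partial orders, and then invoke the standard fact that a left adjoint between small categories is cofinal in the sense required.

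First I would define $R$ on objects by the formula
\[ R(z)(s) = \min\bigl(n,\, z(s) - x(s)\bigr) \quad \text{for all } s\in S,\]
which makes sense since $z\geq x$ forces $z(s) - x(s)\geq 0$. The first thing to check is that $R(z)$ genuinely lands in $\mathcal{E}_n^S$, i.e.\ that $R(z)(s)\in\{0,1,\dots,n\}$ and that $|R(z)|\geq n$. The pointwise bounds are immediate from the $\min(n,-)$. For the total, split into cases: if no coordinate is capped, then $R(z) = z - x$ and $|R(z)| = |z|-|x|\geq n$ by definition of $\mathcal{D}_{n;x}^S$; otherwise some coordinate $s_0$ satisfies $z(s_0)-x(s_0) > n$, so $R(z)(s_0)=n$ already gives $|R(z)|\geq n$. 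Monotonicity of $R$ in $z$ is immediate from the monotonicity of $\min(n,\cdot)$ and of $z\mapsto z(s)-x(s)$.

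Next I would check the adjunction inequality: for $y\in\mathcal{E}_n^S$ and $z\in\mathcal{D}_{n;x}^S$,
\[ J_{n;x}^S(y) \leq z \;\;\iff\;\; y\leq R(z).\]
Unwinding definitions, $J_{n;x}^S(y)\leq z$ means $x(s)+y(s)\leq z(s)$, i.e.\ $y(s)\leq z(s)-x(s)$, for all $s$. Since $y(s)\leq n$ automatically (as $y\in\mathcal{E}_n^S$), this is equivalent to $y(s)\leq\min(n,z(s)-x(s)) = R(z)(s)$ for all $s$, which is exactly $y\leq R(z)$. Since both posets are being treated as categories, this inequality of hom-sets is the adjunction.

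For the cofinality conclusion, I would cite the standard fact (see e.g.\ \cite{MR1712872}) that if $L\dashv R$ is an adjunction of categories, then for every object $b$ of the target, the comma category $L/b$ has a terminal object, namely $R(b)$ equipped with the counit; in particular $L/b$ is non-empty and connected, so $L$ is cofinal (initial) in the sense that limits of any functor $F: \mathrm{target}\to\mathcal{C}$ are computed by $\lim(F\circ L)$, with the canonical comparison map an isomorphism whenever either side exists. Applied to $L = J_{n;x}^S$, this gives the cofinality statement in the lemma. I expect no genuine obstacle here; the only slightly delicate point is confirming $|R(z)|\geq n$ when the cap $\min(n,\cdot)$ is active, which is handled by the case analysis above.
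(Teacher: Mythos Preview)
Your proof is correct and follows essentially the same approach as the paper: you define the same right adjoint $R(z)(s) = \min(n, z(s)-x(s))$, verify the adjunction inequality coordinatewise, and invoke the standard fact (cited to Mac~Lane) that a left adjoint is cofinal. You even include the verification that $R(z)\in\mathcal{E}_n^S$ (i.e.\ $|R(z)|\geq n$), which the paper leaves implicit.
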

\begin{proof}
We construct the right adjoint explicitly. Let $K_{n;x}^S$ be the functor defined by
\begin{eqnarray*} K_{n;x}^S\co \mathcal{D}_{n; x}^S & \rightarrow & \mathcal{E}^S_n \\
 (K_{n;x}^S(y))(s) & = & \min\{ n, y(s) - x(s)\}. \end{eqnarray*}
(We remind the reader that every element $y\in \mathcal{D}_{n; x}^S$ has the property that $y(s) \geq x(s)$ for all $s\in S$, so
$y(s) - x(s)$ will always be nonnegative.)

Now suppose $z\in \mathcal{E}_n^S$ and $y\in \mathcal{D}_{n; x}^S$. 
Then: 
$z\leq K_{n;x}^S(y)$ if and only if $z(s) \leq K_{n;x}^S(y)(s)$ for all $s\in S$, 
 ie, $z\leq K_{n;x}^S(y)$ if and only if $z(s)\leq \min\{ n, y(s) - x(s)\}$ for all $s\in S$.
 By the definition of $\mathcal{E}_n^S$, $z(s)\leq n$ for all $s\in S$. Hence $z\leq K_{n;x}^S(y)$ if and only if $z(s) \leq y(s) - x(s)$ for all $s\in S$,
 ie, $z\leq K_{n;x}^S(y)$ if and only if $x(s) + z(s) \leq y(s)$ for all $s\in S$, 
 ie, $z\leq K_{n;x}^S(y)$ if and only if $J_{n;x}^S(z) \leq y$.
Hence $\hom_{\mathcal{E}_n^S}(z, K_{n;x}^S(y))$ is nonempty if and only if $\hom_{\mathcal{D}_{n;x}^S}(J_{n;x}^S(z),y)$ is nonempty.
Since $\mathcal{E}_n^S$ and $\mathcal{D}_{n;x}^S$ are partially-ordered sets and hence their hom-sets are either nonempty or have only a single element,
we now have a (natural) bijection
\[ \hom_{\mathcal{E}_n^S}(z, K_{n;x}^S(y)) \cong \hom_{\mathcal{D}_{n;x}^S}(J_{n;x}^S(z),y) \]
which is exactly what we are looking for: $J_{n;x}^S$ is left adjoint to $K_{n;x}^S$.

For the fact that having a right adjoint implies cofinality, see section IX.3 of Mac Lane's \cite{MR1712872}. (Mac Lane handles the equivalent dual case.)
\end{proof}

\begin{theorem}{\bf (Fundamental theorem of the May filtration.)} \label{thm on fund theorem}
Let $I_{\bullet}$ be a cofibrant decreasingly filtered commutative monoid in $\mathcal{C}$,
and let $X_{\bullet}$ be a simplicial finite set. 
Then the associated graded commutative monoid 
$E_0^*\left| \mathcal{M}^{X_{\bullet}}(I_{\bullet})\right|$
of the geometric realization of the May filtration
is weakly equivalent, as a commutative graded monoid, to the tensoring
$X_{\bullet}\otimes E_0^* I_{\bullet}$ 
of $X_{\bullet}$ with the associated graded commutative monoid of $I_{\bullet}$:
\[ E_0^*\left| \mathcal{M}^{X_{\bullet}}(I_{\bullet})\right| \simeq
 X_{\bullet}\otimes E_0^* I_{\bullet}.\]
\end{theorem}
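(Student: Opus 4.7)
The plan is to establish the weak equivalence first at the level of simplicial objects and then pass to geometric realization. More precisely, I would first exhibit, levelwise in the simplicial direction, a weak equivalence of simplicial commutative monoid objects in $\mathcal{C}$
\[ \coprod_{n \in \mathbb{N}} \bigl(\mathcal{M}^{X_\bullet}_n(I_\bullet)/\mathcal{M}^{X_\bullet}_{n+1}(I_\bullet)\bigr) \simeq X_\bullet \tilde{\otimes} E_0^* I_\bullet. \]
Remark \ref{rem on structure} supplies the Reedy cofibrancy needed for Running Assumption \ref{ra:2} to apply to $\mathcal{M}^{X_\bullet}_\bullet(I_\bullet)$, so geometric realization commutes with the cofibers and smash products in play, and realizing the above equivalence yields the claimed weak equivalence of associated graded commutative monoids.

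To construct the identification at simplicial degree $j$, I would apply Lemma \ref{cofiber computation lemma} with $S = X_j$ to write
\[ \mathcal{M}^{X_j}_n(I_\bullet)/\mathcal{M}^{X_j}_{n+1}(I_\bullet) \cong \coprod_{\{x \in \mathbb{N}^{X_j}\,:\,|x|=n\}} \bigl(\smash_{s \in X_j} I_{x(s)}\bigr)\big/\bigl(\colim \mathcal{F}^{X_j}_{1;x}(I_\bullet)\bigr). \]
Next, using Lemma \ref{cofinality lemma} (dualized: since $J^{X_j}_{1;x}$ has a right adjoint, $(J^{X_j}_{1;x})^{\op}$ is cofinal for colimits), I would rewrite $\colim \mathcal{F}^{X_j}_{1;x}(I_\bullet)$ as $\colim \mathcal{G}_{X_j}$, where $\mathcal{G}_{X_j}$ is the diagram of Lemma \ref{rather hard lemma} with $Z_{s,0} = I_{x(s)}$ and $Z_{s,1} = I_{x(s)+1}$; the two diagrams agree because $J^{X_j}_{1;x}(z)(s) = x(s)+z(s)$ for $z(s) \in \{0,1\}$. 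Lemma \ref{rather hard lemma} then identifies the resulting quotient with $\smash_{s \in X_j}\bigl(I_{x(s)}/I_{x(s)+1}\bigr)$. Summing over $n$, combining the indexing into $\coprod_{x \in \mathbb{N}^{X_j}}$, and distributing smash products over coproducts (valid by Running Assumption \ref{ra:1}) produces $\smash_{s \in X_j} E_0^* I_\bullet$. Since the smash product in $\mathcal{C}$ of commutative monoids is their coproduct in $\Comm(\mathcal{C})$, this is exactly the degree-$j$ object of the pretensor product $X_\bullet \tilde{\otimes} E_0^* I_\bullet$ from Definition \ref{def of tensoring}.

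I would then verify that this identification is natural in $X_j$, so that it assembles into a map of simplicial objects compatible with face and degeneracy maps; this follows from the naturality statements built into Lemmas \ref{cofiber computation lemma} and \ref{rather hard lemma}, together with the functoriality of $\mathcal{D}^S_{n;x}$, $\mathcal{E}^S_n$, and $J^S_{n;x}$ in the variable $S$. Multiplicative compatibility with the structure supplied by Remark \ref{rem on structure} is obtained by tracing the multiplications induced by the maps $\rho_{i,j}$ of Definition \ref{def of dec filt comm mon} through each step of the identification; by construction of both $E_0^* I_\bullet$ and $X_\bullet \tilde{\otimes} E_0^* I_\bullet$, the outcome matches the multiplication on the target.

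The main obstacle is the passage from the simplicial-level identification to its geometric realization: verifying that realization truly commutes with the formation of the associated graded of the May filtration. This amounts to checking the hypotheses of Running Assumption \ref{ra:2} for each $\mathcal{M}^{X_\bullet}_n(I_\bullet)$ and for each filtration map $\mathcal{M}^{X_\bullet}_{n+1}(I_\bullet) \to \mathcal{M}^{X_\bullet}_n(I_\bullet)$, namely cofibrancy at each simplicial degree, cofibrancy of all degeneracies, and cofibrancy of the filtration maps. These are established by an induction on the filtration index using the pushout product axiom, starting from the cofibrancy clauses of Definitions \ref{def of dec filt obj} and \ref{def of dec filt comm mon}, as flagged in Remark \ref{rem on structure}. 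Once these hypotheses are in place, the remaining steps are combinatorial reindexing, and the theorem follows.
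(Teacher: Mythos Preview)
Your proposal is correct and follows essentially the same approach as the paper: both arguments compute the levelwise cofibers $\mathcal{M}^{X_j}_n(I_\bullet)/\mathcal{M}^{X_j}_{n+1}(I_\bullet)$ by chaining Lemma~\ref{cofiber computation lemma}, the cofinality of $J_{1;x}$ from Lemma~\ref{cofinality lemma}, and the identification in Lemma~\ref{rather hard lemma}, then invoke naturality in $S$ and commutation of geometric realization with cofibers. Your treatment is, if anything, slightly more explicit than the paper's about the Reedy cofibrancy hypotheses and the multiplicative compatibility; the paper handles the latter simply by noting that the comparison map lives in $\Comm(\mathcal{C})$ and that weak equivalences there are created by the forgetful functor to $\mathcal{C}$.
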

\begin{proof}
We must compute the filtration quotients
\begin{equation}\label{iso 2309} \left| \mathcal{M}^{X_{\bullet}}_n(I_{\bullet})\right|/\left| \mathcal{M}^{X_{\bullet}}_{n+1}(I_{\bullet})\right|
\cong
\left| \mathcal{M}^{X_{\bullet}}_n(I_{\bullet})/\mathcal{M}^{X_{\bullet}}_{n+1}(I_{\bullet})\right| .
\end{equation}
(We have isomorphism~\eqref{iso 2309} because $\left|\mathcal{M}^{X_{\bullet}}_{n+1}(I_{\bullet})\right| \rightarrow \left|\mathcal{M}^{X_{\bullet}}_n(I_{\bullet})\right|$ is a cofibration, by Remark~\ref{rem on structure}.)

We compute the filtration quotients as follows. First, we claim that there exists, for any finite set $S$ and for all $n\in \mathbb{N}$, a cofiber sequence
\begin{equation}\label{putative cof seq 1} \colim \left(\mathcal{F}^S_{n+1}(I_{\bullet})\right)\rightarrow \colim \left(\mathcal{F}^S_{n}(I_{\bullet})\right) \rightarrow \coprod_{x\in \mathbb{N}^S\co \left| x\right| = n} 
\left( \Smash_{s\in S}\left( I_{x(s)}/I_{1+x(s)} \right)\right) \end{equation}
in $\mathcal{C}$, natural in $S$. We have already defined (in Definition~\ref{def of may filt 0 4})
how $\mathcal{F}^S_n$ is natural, ie, functorial in $S$; by taking the obvious coproduct
of quotients, this naturality in $S$ induces a naturality in $S$ on the terms 
$\coprod_{x\in \mathbb{N}^S\co \left| x\right| = n} 
\left( \Smash_{s\in S}\left( I_{x(s)}/I_{1+x(s)} \right)\right)$ appearing in~\eqref{putative cof seq 1}. The claim that ~\eqref{putative cof seq 1} is a cofiber sequence implies that 
\begin{equation} \label{important cofiber} 
|\mathcal{M}_n^{X_k}(I_{\bullet})| / |\mathcal{M}_{n+1}^{X_k}(I_{\bullet})|\cong \coprod_{x\in \mathbb{N}^{X_k}\co \left| x\right| = n} 
\left( \Smash_{s\in X_k}\left( I_{x(s)}/I_{1+x(s)} \right)\right),
\end{equation} 
and naturally implies the necessary naturality with respect to the face and degeneracy maps. 

We now show that the cofiber sequence~\eqref{putative cof seq 1} exists.
First, by the universal property of the Kan extension 
from Lemma~\ref{cofiber computation lemma}, 
the cofiber of the map
$\colim  (\mathcal{F}^S_{n+1}(I_{\bullet}) )\rightarrow \colim  (\mathcal{F}^S_{n}(I_{\bullet}) )$
agrees with the cofiber of the map
$\colim (\tilde{\mathcal{F}}^S_{n+1}(I_{\bullet}))\rightarrow \colim (\mathcal{F}^S_{n}(I_{\bullet}) )$.
By Lemma~\ref{cofiber computation lemma}, this cofiber is the coproduct
\[ \coprod_{\{ x\in \mathbb{N}^S\co \left| x \right| = n\}} \left( \left( \Smash_{s\in S} I_{x(s)}\right)/\left( \colim \mathcal{F}_{1; x}^S(I_{\bullet})\right) \right) .\]
In Lemma~\ref{cofinality lemma}, we showed that the functor $J_{1;x}$ is cofinal, hence that the comparison map of colimits
\begin{equation*}\label{consequence of cofinality lemma} \colim \left( \mathcal{F}_{1; x}(I_{\bullet})\circ J_{1;x}\right) \rightarrow \colim \left(\mathcal{F}_{1; x}(I_{\bullet})\right)\end{equation*}
is an isomorphism. (We here have a colimit, not a limit as in the statement of Lemma~\ref{cofinality lemma}, since $\mathcal{F}_{1; x}(I_{\bullet})$ is a
{\em contravariant} functor on $\mathcal{D}_{1; x}^S$. Of course Lemma~\ref{cofinality lemma} still holds in this dual form.)

Now Lemma~\ref{rather hard lemma} identifies the cofiber
\[ \left(\Smash_{s\in S} I_{x(s)}\right)/\left( \colim \left(\mathcal{F}_{1; x}(I_{\bullet})\circ J_{1;x}\right) \right) \]
with $\Smash_{s\in S}\left( I_{x(s)}/I_{1+x(s)} \right)$, as desired. 
So we have our cofiber sequence of the form~\eqref{putative cof seq 1}.

All isomorphisms in the lemmas we have invoked in this proof are natural in $S$,
with the exception of the isomorphisms from Lemma~\ref{cofinality lemma}
and Lemma~\ref{rather hard lemma} which directly involve $\mathcal{E}_1^S$,
only because we did not specify in Lemma~\ref{rather hard lemma} how $\mathcal{G}_S$ is functorial in $S$. In the present proof, $\mathcal{G}_S$ is $\mathcal{F}_{1; x}(I_{\bullet})\circ J_{1;x}$,
and
the cofinality of $J_{1;x}$ together with the fact that $K_{1;x} \circ J_{1;x} = \id_{\mathcal{E}_1^S}$
implies, on inspection of the colimit diagrams, that the isomorphism
\begin{eqnarray*} 
\colim \mathcal{G}_S & = & \colim \left( \mathcal{F}_{1; x}(I_{\bullet})\circ J_{x}\right) \\
 & \cong & \colim \left(\mathcal{F}_{1; x}(I_{\bullet})\right)\end{eqnarray*}
is natural in $S$; details are routine and left to the reader.
We conclude that the cofiber sequence~\eqref{putative cof seq 1} is indeed natural in $S$.

Now we have the sequence of simplicial commutative monoids in $\mathcal{C}$:
\[\xymatrix{  \mathcal{M}^{X_{\bullet}}_0(I_{\bullet}) & \ar[l] \mathcal{M}^{X_{\bullet}}_1(I_{\bullet}) & \ar[l]  \mathcal{M}^{X_{\bullet}}_2(I_{\bullet}) & \ar[l] \dots }\]
and geometric realization commuting with cofibers together with the isomorphism \eqref{important cofiber} implies that the comparison map
\begin{equation}\label{comparison map 1}  X_{\bullet}\otimes E_0^* I_{\bullet}\rightarrow E_0^*\left| \mathcal{M}^{X_{\bullet}}(I_{\bullet})\right| \end{equation}
of objects in $\mathcal{C}$ is a weak equivalence. Hence the comparison map~\eqref{comparison map 1} in $\Comm(\mathcal{C})$ must also be a weak equivalence, since
the weak equivalences in $\Comm(\mathcal{C})$ are created by the forgetful functor
$\Comm(\mathcal{C})\rightarrow\mathcal{C}$, by Running Assumption~\ref{ra:1}.
\end{proof}
\subsection{Construction of the topological Hochschild-May spectral sequence.}\label{construction of ss}
\begin{definition}\label{def of gen hom thy}
By a {\em connective generalized homology theory on $\mathcal{C}$} we shall mean the choice, for each integer $n$, of a functor $E_n\co \Ho(\mathcal{C}) \rightarrow \Ab$
satisfying the axioms:
\begin{description}
\item[Exactness] 
For each integer $n$ and each distinguished triangle 
$X \rightarrow Y \rightarrow Z \rightarrow \Sigma X $
in $\Ho(\mathcal{C})$,
the following sequence of abelian groups is exact:
\[  \xymatrix{\dots \ar[r] &  E_{n}(Y) \ar[r]^{} & E_{n}(Z) \ar[r]^{} & E_n(\Sigma X) \ar[r]^{} &   E_n(\Sigma Y)  \ar[r]^{} & E_n(\Sigma Z) \ar[r] & \dots }\]
\item[Additivity]
For each integer $n$ and each collection of objects $\{ X_i\}_{i\in I}$ in $\Ho(\mathcal{C})$,
the following canonical map of abelian groups is an isomorphism:
\[ \coprod_{i\in I} E_n(X_i) \rightarrow E_n(\coprod_{i\in I} X_i).\]


\item[Connectivity of the unit object]  We have $E_n(\mathbbm{1}) \cong 0$ for all $n<0$.
\item[Connectivity of smash products] Suppose that $X,Y$ are objects of $\mathcal{C}$, and that $A,B$ are nonnegative integers such that
$E_n(X) \cong 0$ for all $n<A$, and $E_n(Y) \cong 0$ for all $n<B$.
Then $E_n(X\Smash Y) \cong 0$ for all $n<A+B$.
\end{description}
\end{definition}
Clearly, Definition~\ref{def of gen hom thy} is just a formulation, in a general pointed model category, of (the triangulated category form of) the Eilenberg-Steenrod axioms, from \cite{MR0012228}, for a generalized homology theory with connective 
coefficients. The ``connectivity of smash products'' axiom is easily proven anytime one has an $E$-homology K\"{u}nneth spectral sequence in $\mathcal{C}$, which is the case in 
any of the usual models for the stable homotopy category.

\begin{definition}\label{def of thh-may ss}
If $I_{\bullet}$ is a cofibrant decreasingly filtered commutative monoid in $\mathcal{C}$, 
$X_{\bullet}$ is a simplicial finite set, and $E_*$ is a connective generalized homology theory on $\mathcal{C}$, 
then by the {\em topological Hochschild-May spectral sequence for $X_{\bullet}\tilde{\otimes} I_{\bullet}$}
we mean the spectral sequence obtained by applying $E_*$ to the tower of cofiber sequences in $\mathcal{C}$
\begin{equation}\label{tower of sequences} \xymatrix{ 
\dots  \ar[r] & \left| \mathcal{M}_1^{X_{\bullet}}(I_{\bullet})\right| \ar[r] \ar[d] & \left| \mathcal{M}_0^{X_{\bullet}}(I_{\bullet})\right|  \ar[d]  \\
  &\left| \mathcal{M}_1^{X^{\bullet}}(I_{\bullet})\right|/\left| \mathcal{M}_2^{X^{\bullet}}(I_{\bullet})\right| &\left| \mathcal{M}_0^{X^{\bullet}}(I_{\bullet})\right|/\left| \mathcal{M}_1^{X^{\bullet}}(I_{\bullet})\right|. 
}
\end{equation}

That is, it is the spectral sequence of the exact couple 
\[\xymatrix{  
D^1_{*,*}  \ar[rr] && D^1_{*,*} \ar[dl]\\
& E^1_{*,*} \ar[ul] & }\]
where $E^1_{*,*}= \bigoplus_{i,j} H_i \left| \mathcal{M}_j^{X_{\bullet}}(I_{\bullet})\right|/\left| \mathcal{M}_{j+1}^{X_{\bullet}}(I_{\bullet})\right|$ and $D^1_{*,*}=\bigoplus_{i,j} H_i \left| \mathcal{M}_j^{X_{\bullet}}(I_{\bullet})\right|$. 
\end{definition}
 
\begin{lemma}{\bf (Connectivity conditions.)}\label{connectivity conditions lemma}
Let $E_*$ be a connective generalized homology theory on $\mathcal{C}$. Suppose that there exist objects $Z,E$ of $\mathcal{C}$ such that
$E_*(-)$ is naturally isomorphic
to $[\Sigma^* Z, E \Smash -]$. Let
 \begin{equation}\label{sequence 13083} \dots \rightarrow Y_2 \rightarrow Y_1 \rightarrow Y_0\end{equation}
 be a sequence
in $\mathcal{C}$, and suppose that $E_n(Y_i) \cong 0$ for all $n<i$.
Then 
\[ [\Sigma^n Z, \holim_i (E \Smash Y_i)] \cong 0\] 
for all $n$. If we instead suppose that $A$ is a nonnegative integer and that $X_{\bullet}$ is a simplicial object of $\mathcal{C}$ such that $E_n(X_i) \cong 0$ for all $n<A$ and all $i$,
then $E_n\left(\left| X_{\bullet}\right|\right) \cong 0$ for all $n<A$.
\end{lemma}
\begin{proof} 
Since $\mathcal{C}$ is assumed stable, the homotopy limit $\holim_i Y_i$ is the homotopy fiber of the map
\[ \prod_{n\in \mathbb{N}} Y_n \stackrel{\id - T}{\longrightarrow} \prod_{n\in \mathbb{N}} Y_n \]
in $\Ho(\mathcal{C})$, where $T$ is the product of the maps $Y_n \rightarrow Y_{n-1}$ occuring in the sequence~\eqref{sequence 13083}.
For each object $Z$ of $\mathcal{C}$, we then have the long exact sequence obtained by applying the functor $[\Sigma^* Z, E \Smash -]$ to the fiber sequence 
\[ \holim_i Y_i \rightarrow \prod_{n\in \mathbb{N}} Y_n \stackrel{\id - T}{\longrightarrow} \prod_{n\in \mathbb{N}} Y_n. \]
hence the Milnor exact sequence
\[ 0 \rightarrow R^1\lim_i [\Sigma^{j+1} Z, E\Smash Y_i] \rightarrow [\Sigma^j Z, \holim_i E\Smash Y_i] \rightarrow \lim_i [\Sigma^j Z, E\Smash Y_i] \rightarrow 0 .\]
The assumption that $[\Sigma^j Z, E\Smash Y_i] \cong 0$ for $j<i$ guarantees that the sequence
\[ \dots \rightarrow  [\Sigma^j Z, E\Smash Y_2] \rightarrow [\Sigma^j Z, E\Smash Y_1] \rightarrow [\Sigma^j Z, E\Smash Y_0] \]
is eventually constant and zero for all $j$, hence both its limit and $R^1\lim$ vanish for all $j$, hence $[\Sigma^j Z, \holim_i E\Smash Y_i]\cong 0$ for all $j$.

The Bousfield-Kan spectral sequence, ie, the $E$-homology spectral sequence of the simplicial object $X_{\bullet}$,
has input $E^1_{s,t} \cong \pi_s(E\Smash X_t)$ and converges to $E_{s+t}\left(\left| X_{\bullet}\right|\right)$, since 
$E_n(X_i) \cong 0$ for all $n<A$ and all $i$. 
The differential in this spectral sequence is of the form $d^r\co E^r_{s,t} \rightarrow E^r_{s-r,t+r-1}$, hence this spectral sequences has a nondecreasing upper vanishing curve at $E^1$, hence converges strongly.
Triviality of $E^1_{s,t}$ for $s<A$ and $t<0$ then gives us that $E_s\left(\left| X_{\bullet}\right|\right)$ vanishes for $s<A$.
\end{proof}

\begin{definition}\label{Reedy cube}
If $S$ is a finite set, recall that $\mathcal{D}^S_{i}$ is the partially-ordered set of functions $f: S \rightarrow \mathbb{N}$ such that $\left|f\right| \geq i$. The category $(\mathcal{D}^S_{i})^{\op}$ has a cofinal subcategory $(\mathcal{E}^S_i)^{\op}$ defined in \eqref{def of may filt 0 3}. We give this cofinal subcategory a Reedy category structure by letting the degree function be $-\left|s\right|$, letting $((\mathcal{E}^S_i)^{\op})_{+}$ contain all morphisms and $((\mathcal{E}^S_i)^{\op})_{-}$ contain only identity morphisms.
\end{definition}
\begin{remark}\label{projective coincides with reedy}
Since the category $(\mathcal{E}^S_i)^{\op}$ is a direct category by Definition \ref{Reedy cube}, the Reedy model structure on the category of functors of the form $(\mathcal{E}^S_i)^{\op}\rightarrow \mathcal{C}$, where $\mathcal{C}$ is a model category, agrees with the projective model structure. 
\end{remark}
 Since $\left|x\right|<i\cdot(\#S)$, the target of the degree map on $(\mathcal{E}^S_i)^{\op}$ is a (finite) ordinal, and this satisfies the requirement for a Reedy category. 

\begin{lemma}\label{connectivity of truncated cube colims}
Let $S$ be a finite set, and for each integer $i$, let $\mathcal{D}^S_{i}$ and $(\mathcal{E}^S_i)^{\op}$ be the categories defined immediately preceding Definition~\eqref{def of may filt 0 4}. For each $s\in S$ and each $n\in\mathbb{N}$, let $\mathcal{J}_{n+1}(s) \rightarrow \mathcal{J}_n(s)$ be a cofibration between cofibrant objects in $\mathcal{C}$, and 
let $\mathcal{J}\co (\mathcal{D}_{i}^S)^{\op} \longrightarrow \mathcal{C}$ be the functor given by $\mathcal{J}(f) = \Smash_{s\in S} \mathcal{J}_{f(s)}(s)$.
equip $(\mathcal{E}^S_i)^{\op}$ with the Reedy structure of Definition \ref{Reedy cube}.
Then the restriction of $\mathcal{J}$ to $(\mathcal{E}^S_i)^{\op}$ is Reedy-cofibrant.

Furthermore, if $N$ is an integer and we additionally assume that 
$E_*$ is a connective generalized homology theory (as defined in Definition \ref{def of gen hom thy}) such
that $E_m(\mathcal{J}(f))\cong 0$ for all $m<N$ and $f\in (\mathcal{D}_{i}^S)^{\op}$, then $E_m\left(\colim\mathcal{J}\right)\cong 0$ for all $m<N$ and all integers $i$.
\end{lemma}

\begin{proof}
Choose an object $f\in(\mathcal{E}^S_{i})^{\op}$. Then let $(\mathcal{E}^S_{1;f})^{\op}$ be the punctured overcategory of $f$, ie, $(\mathcal{E}^S_{1;f})^{\op}$ is the full subcategory of the overcategory $(\mathcal{E}^S_{i})^{\op}\downarrow f$ generated by all objects other than $\id_f$. 
The latching object $L_f(\tilde{\mathcal{J}})$ is simply the colimit of $\tilde{\mathcal{J}}$ restricted to $((\mathcal{E}^S_{1; f})^{\op})_{+}=(\mathcal{E}^S_{1; f})^{\op}$, and in $(\mathcal{E}^S_{1; f})^{\op}$  we have the cofinal subcategory consisting of all functions $g\co S \rightarrow\mathbb{N}$ such that $g\neq f$ and such that $f(s) \leq g(s) \leq f(s) + 1$ for all $s\in S$. 

The composite map 
\[ \colim\left( \tilde{\mathcal{J}}\right)^{\op} \stackrel{\cong}{\longrightarrow} 
L_f(\tilde{\mathcal{J}}) \rightarrow \tilde{\mathcal{J}}(f)\] 
is precisely the pushout product of the maps $\{\mathcal{J}_{f(s)+1}(s)\rightarrow \mathcal{J}_{f(s)}(s)\}_{s\in S}$ and therefore it is a cofibration by the pushout product axiom. Hence, $\tilde{\mathcal{J}}$ is Reedy cofibrant.

Since $\tilde{\mathcal{J}}$ is also projectively cofibrant by Remark~\ref{projective coincides with reedy}, $\colim\tilde{\mathcal{J}}$ is weakly equivalent to $\left| \sr\left(\tilde{\mathcal{J}}\right)\right|$ by Theorem~\ref{thm from gambino}. Since the category $(\mathcal{E}^S_{i})^{\op}$ is cofinal in $(\mathcal{D}_{i}^S)^{\op}$,
\[ E_*\left(\colim\mathcal{J} \right) \cong 
E_*\left(\colim\tilde{\mathcal{J}}\right). \] 
Together, these two facts imply that $E_*\left(\colim\left(\mathcal{J} \right)\right)$ is concentrated in degrees $\geq N$ by Theorem~\ref{homological bkss existence thm}. 

\end{proof}

\begin{lemma} \label{conn lem 2}
Suppose $E_*$ is a connective generalized homology theory as defined in Definition \ref{def of gen hom thy}, and $\mathcal{M}_i^{S}(I_{\bullet})$ is the $i$-th degree of the May filtration for a finite set $S$ and a cofibrant decreasingly filtered commutative monoid $I_{\bullet}$ as defined in Definition \ref{def of dec filt comm mon}. Then, if 
$E_m(I_i)\cong 0$ for all $m,i\in \mathbb{N}$ such that $m<i$, then 
$ E_m(\mathcal{M}_i^{S}(I_{\bullet}))\cong 0 $ 
for all $m,i\in \mathbb{N}$ such that $m< i$. 
\end{lemma}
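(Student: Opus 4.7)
The plan combines two ingredients: each object appearing in the colimit defining $\mathcal{M}_i^S(I_\bullet)$ is already $i$-connective in $H$-homology, and the colimit itself can be built up as an iterated homotopy pushout that preserves $i$-connectivity via the Mayer--Vietoris long exact sequence.

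First, by Lemma~\ref{cofinality lemma} (applied in its dual form to the contravariant functor $\mathcal{F}_i^S(I_\bullet)$ with $x=\vec{0}$), I may compute the colimit over the finite sub-poset $\mathcal{E}_i^S$ instead of over $\mathcal{D}_i^S$. For every $x\in\mathcal{E}_i^S$ one has $|x|\geq i$, and iterating the ``connectivity of smash products'' axiom in Definition~\ref{def of gen hom thy} gives $H_m\bigl(\smash_{s\in S} I_{x(s)}\bigr)=0$ for all $m<|x|$; hence each value of the restricted diagram $\mathcal{F}_i^S(I_\bullet)\circ J_{i;\vec{0}}^S$ is $i$-connective.

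Second, I build $\mathcal{M}_i^S(I_\bullet)$ as an iterated homotopy pushout over the finite direct category $(\mathcal{E}_i^S)^{\op}$. Choose a linear extension $y_1,\dots,y_N$; at the $(k{+}1)$-st stage one glues on $\mathcal{F}_i^S(I_\bullet)(y_{k+1})$ along the latching map from $L_{k+1}:=\colim_{j\leq k,\,y_j>y_{k+1}}\mathcal{F}_i^S(I_\bullet)(y_j)$. The pushout product axiom from Running Assumption~\ref{ra:1}, together with cofibrancy of each $f_j:I_{j+1}\to I_j$, inductively guarantees that each latching map is a cofibration, so every stage is a genuine homotopy pushout. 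A double induction then closes the argument: the inductive hypothesis says that both the partial colimit and each latching object $L_{k+1}$ (itself a colimit over a proper sub-poset, which one handles by a nested induction on the cardinality of the indexing poset) are $i$-connective; the adjoined piece $\mathcal{F}_i^S(I_\bullet)(y_{k+1})$ is $i$-connective by the previous paragraph; and the Mayer--Vietoris sequence for the homotopy pushout then forces $H_m=0$ for $m<i$ in the new colimit.

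The main technical obstacle is the cofibrancy of the latching maps. This is a standard consequence of the pushout product axiom and of the cofibrant decreasingly filtered structure on $I_\bullet$, but it requires careful bookkeeping to organize the argument along a linear extension of $(\mathcal{E}_i^S)^{\op}$ so that the inductions on poset size and on stage of the linear extension fit together cleanly.
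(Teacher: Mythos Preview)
Your approach is correct and rests on the same underlying mechanism as the paper's proof---build $\mathcal{M}_i^S(I_\bullet)$ as an iterated homotopy pushout of $i$-connective pieces and apply Mayer--Vietoris at each stage---but the organization is genuinely different. The paper inducts on $\#S$: it first reduces the general $i$ to the case $i=1$ by ``filling in vertices'' (writing $\colim\mathcal{E}_i^S$ as an iterated pushout of unit cubes), and then for $i=1$ decomposes the punctured $n$-cube via a chain of explicit cofiber sequences $\colim\mathcal{B}_k \to \colim\mathcal{B}_{k-1} \to \colim\mathcal{Cof}_k$, each of whose terms is controlled by $(n-1)$-cubes and hence by the inductive hypothesis on $\#S$. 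Your route bypasses the induction on $\#S$ entirely: you work directly with a linear extension of the finite direct poset $(\mathcal{E}_i^S)^{\op}$ and run a single uniform induction on poset size, with Reedy cofibrancy of the diagram doing all the work of ensuring the pushouts are homotopy pushouts.

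Your version is conceptually cleaner and shorter once the Reedy cofibrancy is in hand; as you correctly identify, this is exactly the iterated pushout-product statement for the external smash of the cofibrant sequences $I_i\to I_{i-1}\to\cdots\to I_0$, and it does require some bookkeeping (one must check that the latching objects in the sub-poset $(\mathcal{E}_i^S)^{\op}$ agree with those in the ambient cube $(\{0,\dots,i\}^S)^{\op}$, which holds because $\{x'>x\}\cap\{0,\dots,i\}^S\subset\mathcal{E}_i^S$ whenever $x\in\mathcal{E}_i^S$). The paper's argument is more elementary in that it never names Reedy theory, but pays for this with a longer, more ad hoc decomposition. Neither approach is strictly more general; both consume the same model-categorical inputs (pushout product axiom, left properness, exactness of $H_*$).
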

\begin{proof}
Immediate from Lemma~\ref{connectivity of truncated cube colims} and the connectivity hypotheses in Definition~\ref{def of gen hom thy}.

\end{proof}
\begin{theorem}\label{subprop on thh-may ss} 
Suppose $I_{\bullet}$ is a 
Hausdorff cofibrant decreasingly filtered commutative monoid in $\mathcal{C}$, 
$X_{\bullet}$ is a simplicial finite set, and $E_*$ is a connective generalized homology theory on $\mathcal{C}$. Suppose $E_*(-)\cong [\Sigma^*Z,-\wedge E]$ for some objects $Z$ and $E$ in $\mathcal{C}$. 
Suppose the following connectivity axiom: $E_m(I_n)\cong 0$ for all $m<n$.

Then the topological Hochschild-May spectral sequence is strongly convergent, its differential satisfies the graded Leibniz rule, and its input and output and differential are as follows:
\begin{eqnarray*}
E^1_{s,t}\cong E_{s,t}(X_{\bullet}\otimes E_0^*I_{\bullet}) & \Rightarrow & E_{s}(X_{\bullet}\otimes I_{0}) \\
d^r\co E^r_{s,t} & \rightarrow & E^r_{s-1,t+r} 
\end{eqnarray*}
\end{theorem}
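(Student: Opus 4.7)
The plan is to analyze the exact couple of Definition~\ref{def of thh-may ss} in four stages: identification of the $E^1$-page, determination of the differential bidegree, strong convergence, and multiplicativity.

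First I would identify the $E^1$-page via the Fundamental Theorem of the May filtration (Theorem~\ref{thm on fund theorem}). That theorem provides a weak equivalence of commutative graded monoids
\[ E_0^* \left| \mathcal{M}^{X_\bullet}(I_\bullet)\right| \simeq X_\bullet \otimes E_0^* I_\bullet, \]
so applying $H_*$ to the grading degree $t$ summand yields
\[ E^1_{s,t} = H_s\left(\left|\mathcal{M}_t^{X_\bullet}(I_\bullet)\right|/\left|\mathcal{M}_{t+1}^{X_\bullet}(I_\bullet)\right|\right) \cong H_{s,t}\left(X_\bullet \otimes E_0^* I_\bullet\right). \]
Since $\mathcal{M}_0^S(I_\bullet) \cong \smash_{s\in S} I_0$, the simplicial object $\mathcal{M}_0^{X_\bullet}(I_\bullet)$ coincides with $X_\bullet \tilde\otimes I_0$, so the abutment of the spectral sequence is $H_*(X_\bullet \otimes I_0)$. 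The bidegree $d^r \colon E^r_{s,t} \to E^r_{s-1,t+r}$ falls out of the usual exact-couple bookkeeping: the connecting map $H_s(|\mathcal{M}_t|/|\mathcal{M}_{t+1}|) \to H_{s-1}(|\mathcal{M}_{t+1}|)$ has bidegree $(-1,+1)$ and is followed, at the $r$-th page, by the $(r{-}1)$-fold iteration of $i$.

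For strong convergence, the Hausdorff and connectivity hypotheses combine as follows. By Lemma~\ref{conn lem 2}, the connectivity axiom $H_m(I_n)\cong 0$ for $m<n$ implies $H_m(\mathcal{M}_i^{X_j}(I_\bullet)) \cong 0$ for all $m<i$ and all $j$. The second clause of Lemma~\ref{connectivity conditions lemma} then upgrades this to $H_m\left(\left|\mathcal{M}_i^{X_\bullet}(I_\bullet)\right|\right) \cong 0$ for $m<i$. Applying the first clause of Lemma~\ref{connectivity conditions lemma} to the tower $Y_i = \left|\mathcal{M}_i^{X_\bullet}(I_\bullet)\right|$ gives $[\Sigma^n Z, \holim_i H \smash Y_i] \cong 0$ for every $n$, and the Milnor $\lim^1$ exact sequence then forces both $\lim_i H_n(Y_i)$ and $\lim^1_i H_n(Y_i)$ to vanish for every $n$. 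Combined with the lower vanishing line inherited from the connectivity estimate on $E^1$, this yields strong convergence in the Boardman sense.

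Finally, for the Leibniz rule, I would invoke Remark~\ref{rem on structure}: the filtered object $\left|\mathcal{M}^{X_\bullet}(I_\bullet)\right|$ is itself a cofibrant decreasingly filtered commutative monoid in $\mathcal{C}$. Consequently the multiplication maps $\left|\mathcal{M}_m^{X_\bullet}(I_\bullet)\right| \smash \left|\mathcal{M}_n^{X_\bullet}(I_\bullet)\right| \to \left|\mathcal{M}_{m+n}^{X_\bullet}(I_\bullet)\right|$ pair the tower with itself compatibly with the filtration, producing a pairing of exact couples in the sense of Massey. The standard yoga of pairings of exact couples then induces a pairing on each $E^r$ for which the differential is a graded derivation, and, via Theorem~\ref{thm on fund theorem} again, this pairing on $E^1$ agrees with the multiplication on $H_{*,*}(X_\bullet \otimes E_0^* I_\bullet)$. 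The main obstacle is the strong convergence step: the Hausdorff hypothesis controls the filtration on $I_\bullet$, but to promote it to a vanishing statement for the filtration on $X_\bullet \tilde\otimes I_\bullet$ one must combine the multiplicative connectivity estimate (Lemma~\ref{conn lem 2}, whose inductive proof over $\hash S$ is already delicate) with the simplicial connectivity estimate (Lemma~\ref{connectivity conditions lemma}), and then verify that the resulting vanishing is strong enough to feed into a Boardman-type convergence criterion.
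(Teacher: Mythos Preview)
Your proposal is correct and follows essentially the same route as the paper: identify $E^1$ via Theorem~\ref{thm on fund theorem}, obtain convergence by feeding the connectivity estimate of Lemma~\ref{conn lem 2} into Lemma~\ref{connectivity conditions lemma} to kill $[\Sigma^* Z,\holim_i(H\smash|\mathcal{M}_i^{X_\bullet}(I_\bullet)|)]$, and deduce the Leibniz rule from the filtered-monoid structure of Remark~\ref{rem on structure}. Two cosmetic differences: the paper cites Dugger's pairing-of-towers result (Proposition~5.1 of~\cite{mult1}) rather than Massey for multiplicativity, and it phrases the $E^1$ vanishing $E^1_{s,t}=0$ for $s<t$ as an \emph{upper} vanishing curve (bounding $t$ for fixed $s$) rather than a lower one; also note that the Hausdorff hypothesis is not actually invoked in the paper's argument, since the connectivity axiom alone already forces the relevant homotopy limit to vanish.
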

\begin{proof}
It is standard (see eg the section on Adams spectral sequences in~\cite{MR1718076}) that
 the $E$-homology spectral sequence of a tower of cofiber sequences of the form~\eqref{tower of sequences} converges to $E_*\left(\left| \mathcal{M}_0^{X_{\bullet}}(I_{\bullet})\right|\right)$ as long as 
$\left[\Sigma^* Z,  \holim_i \left( E\Smash\left| \mathcal{M}_i^{X_{\bullet}}(I_{\bullet})\right| \right) \right]$ is trivial.
By Lemma~\ref{conn lem 2} 
$E_m\left( \left| \mathcal{M}_i^{X_{\bullet}}(I_{\bullet})\right| \right) \cong 0$ for all $m<i$,
so by Lemma~\ref{connectivity conditions lemma}, 
\[ \left[\Sigma^n Z, \holim_i \left(E \Smash \left| \mathcal{M}_i^{X_{\bullet}}(I_{\bullet})\right|\right)\right] \cong 0 \]
 for all $n$, as desired.
Hence the spectral sequence converges to 
$E_*\left(\left| \mathcal{M}_0^{X_{\bullet}}(I_{\bullet})\right|\right)$ and by Theorem \ref{thm on fund theorem} this is isomorphic to $E_*(X_{\bullet}\otimes I_0).$ 

That the differential has the stated bidegree is a routine and easy 
computation in the spectral sequence of a tower of cofiber sequences. The sequence 
\[ \xymatrix{ \dots \ar[r] & \left | \mathcal{M}_2^{X_{\bullet}}(I_{\bullet}) \right |  \ar[r] & \left | \mathcal{M}_1^{X_{\bullet}}(I_{\bullet}) \right | \ar[r] &  \left | \mathcal{M}_0^{X_{\bullet}}(I_{\bullet}) \right | } \]
is a cofibrant decreasingly filtered commutative monoid in $\mathcal{C}$ as observed in Remark \ref{rem on structure} and therefore, in particular, it produces a ``pairing of towers'' in the sense of ~\cite{2003math......5173D} and therefore by Proposition 5.1 of ~\cite{2003math......5173D} the differentials in the spectral sequence satisfy a graded Leibniz rule.

Strong convergence is also standard: the connectivity axiom, the ``connectivity of smash products'' axiom from Definition~\ref{def of gen hom thy}, and Lemma~\ref{conn lem 2} together imply that our spectral sequence has a nondecreasing upper vanishing curve already at the $E^1$-term, so the spectral sequence converges strongly.
\end{proof} 

\begin{remark}\label{remark on day conv}
Another construction of our $THH$-May spectral sequence
\begin{equation}\label{our ss 4308} E^1_{*,*} \cong E_{*,*}(X_{\bullet} \otimes E_0^{*}I_{\bullet})  \Rightarrow E_{*}(X_{\bullet} \otimes I_0)\end{equation}
is possible using the Day convolution product. This construction is conceptually cleaner, but it does not, to our knowledge, simplify the process of proving that the resulting spectral sequence has the correct input term, output term and convergence properties.

Recall from Remark \ref{rem filt comm mon} that a cofibrant decreasingly filtered commutative monoid in $\mathcal{C}$ (without the cofibrancy assumption) is the same data as an object in $\Comm \mathcal{C}^{\mathbb{N}^{\op}}$. A cofibrant object in $\Comm \mathcal{C}^{\mathbb{N}^{\op}}$ with the projective model structure is, by Lemma \ref{lem cof proj}, a cofibrant decreasingly filtered commutative monoid in $\mathcal{C}$ (including that cofibrancy assumption).

Now fix a simplicial finite set $X_{\bullet}$ and a cofibrant commutative monoid object $I$ in $\mathcal{C}^{\mathbb{N}^{\op}}$. Hence, we can form the pretensor product $X_{\bullet}\tilde{\otimes} I_{\bullet}$, which is a simplicial object in 
$\Comm \mathcal{C}^{\mathbb{N}^{\op}}$. For example, if $X_{\bullet}$ is the usual minimal simplicial model for the circle $S_{\bullet}^1$, then 
$X_{\bullet}\tilde{\otimes} I_{\bullet}$ is the cyclic bar construction using the Day convolution as the tensor product:
\begin{equation*}
S^1_{\bullet} \tilde{\otimes} I = 
\left( 
\vcenter{ \xymatrix{I \ar[r] &   I\otimes_{\Day} I \ar@<1ex>[l]\ar@<-1ex>[l]\ar@<1ex>[r]\ar@<-1ex>[r] &  I\otimes_{\Day} I\otimes_{\Day} I \ar@<2ex>[l]\ar@<-2ex>[l]\ar[l]                            \ar@<2ex>[r]\ar@<-2ex>[r]\ar[r] & \dots \ar@<3ex>[l]\ar@<-3ex>[l]\ar@<1ex>[l]\ar@<-1ex>[l]   }   } \right )\end{equation*}

Since $I$ is a functor $\mathbb{N}^{\op} \rightarrow \mathcal{C}$, we will write $I(n)$ for the evaluation of this functor at a nonnegative integer $n$. (If we instead think of $I$ as a decreasingly filtered commutative monoid, as in most of the rest of this paper, we would write $I_n$ instead of $I(n)$.)
We write $\left( S^1_{\bullet}\tilde{\otimes} I \right ) (i)$ for the the simplicial object in $\mathcal{C}$ 
\begin{equation*}
\left (S^1_{\bullet} \tilde{\otimes} I \right) (i) = 
\left( 
\vcenter{ \xymatrix{I (i) \ar[r] &   (I\otimes_{\Day} I) (i) \ar@<1ex>[l]\ar@<-1ex>[l]\ar@<1ex>[r]\ar@<-1ex>[r] &  (I\otimes_{\Day} I\otimes_{\Day} I )(i)\ar@<2ex>[l]\ar@<-2ex>[l]\ar[l]                            \ar@<2ex>[r]\ar@<-2ex>[r]\ar[r] & \dots \ar@<3ex>[l]\ar@<-3ex>[l]\ar@<1ex>[l]\ar@<-1ex>[l]   }   } \right )\end{equation*}
Applying geometric realization to $\left (S^1_{\bullet}\tilde{\otimes} I\right )(i)$, we get a cofibrant decreasingly filtered object in in $\mathcal{C}$ (assuming Running Assumption \ref{ra:2})
\[ \left | \left (S^1_{\bullet}\tilde{\otimes} I \right ) (0) \right | \leftarrow \left | \left( S^1_{\bullet}\tilde{\otimes} I\right ) (1) \right |  \leftarrow \left| \left( S^1_{\bullet}\tilde{\otimes} I\right) (2) \right| \leftarrow \dots\]
and the spectral sequence obtained by applying a generalized homology theory $E_*$ to this cofibrant decreasingly filtered object in $\mathcal{C}$ is precisely the spectral sequence~\eqref{our ss 4308}, the spectral sequence constructed and considered throughout this paper. (It is an easy exercise in unwinding definitions to check that this spectral sequence agrees with the one constructed in Definition~\ref{def of thh-may ss}, but to verify that the resulting spectral sequence has the expected input term, output term, and convergence properties amounts to exactly the same proofs already found in this paper which aren't expressed in terms of Day convolution.)
\end{remark}

\section{Decreasingly filtered commutative ring spectra.} \label{section 2}
\subsection{Convenient model structures on functor categories.} \label{convenient model cat for filtered objects}
For this section we will need some additional assumptions on our model category $\mathcal{C}$ satisfying \ref{ra:1}. These are not used to construct the spectral sequence, but they are used to construct a large class of examples of decreasingly filtered commutative ring spectra in Section \ref{whitehead}. Recall that the symbol $\square$ was defined in Lemma~\ref{n=2 case of hard lemma}.
\begin{definition}[Definition 3.4 of \cite{MR3666740}] \label{def of scma}
We say a model category $\mathcal{A}$ satisfies the strong commutative monoid axiom if, for any cofibration $h$ (respectively,  any acyclic cofibration $h$) the map $h^{\square n}/\Sigma_n$ is a cofibration (respectively,  an acyclic cofibration) for each integer $n\ge 1$. 
\end{definition}
\begin{runningassumption}\label{ra:3}
We assume that $\mathcal{C}$ satisfies the strong commutative monoid axiom. We also assume that $\mathcal{C}$ is locally presentable as a category. Additionally, we assume $\mathcal{C}$ is a right proper model category.
\end{runningassumption}
For example, symmetric spectra in the positive flat stable model structure satisfy Running Assumption~\ref{ra:3}; see~\cref{whitehead}.
\begin{remark}
If $\mathcal{C}$ is a model category satisfying Running Assumption \ref{ra:1} and Running Assumption \ref{ra:2}, then $\mathcal{C}$ is cofibrantly generated and locally presentable and therefore it is combinatorial in the sense of Jeff Smith (see Definition 2.1 in Dugger \cite{MR1870516}). 
\end{remark} 
We will also make use of categories enriched in a model category $\mathcal{C}$ satisfying Running Assumption \ref{ra:1}. See Kelly \cite{MR2177301}, for a good treatment of enriched category theory at the level of generality needed for this paper. 

Given a category $\mathcal{B}$ enriched over $\mathcal{C}$, we will also discuss the category of $\mathcal{C}$-functors on $\mathcal{B}$, written $\mathcal{C}^{\mathcal{B}}$. It can be shown (see Kelly \cite{MR2177301} for example) that $\mathcal{C}^{\mathcal{B}}$ can again be equipped with a $\mathcal{C}$-enriched category structure, but we do not use that property here and we will write $\mathcal{C}^{\mathcal{B}}$ for the underlying category of enriched functors and natural transformations. 

\begin{definition}\label{natural numbers}
Suppose $\mathcal{C}$ satisfies Running Assumption \ref{ra:1}. Let  $\mathcal{N}^{\op}$ be the $\mathcal{C}$-enriched category with objects $\ob\mathcal{N}^{\op}=\mathbb{N}$ and morphism objects 
\[ \mathcal{N}^{\op}(n,m)= \left \{\begin{array}{c} \mathbbm{1} \text{ if } n\ge m \\ 0 \text{ if } n<m \end{array} \right. \]
where $\mathbbm{1}$ is the unit in $\mathcal{C}$ and $0$ is the zero object of $\mathcal{C}$. We equip $\mathcal{N}^{\op}$ with a symmetric monoidal product using the usual addition $+$ so that $0$ is the unit of the symmetric monoidal structure on $\mathcal{N}^{\op}.$ This makes $\mathcal{N}^{\op}$ a symmetric monoidal $\mathcal{C}$-category. 
\end{definition}
\begin{remark}\label{equiv of cats Day}
In Remark~\ref{rem filt comm mon} we discussed the theorem, due to Day in \cite{Day}, that the category of lax symmetric monoidal functors in $\mathcal{C}^{\bN^{\op}}$ is equivalent to the category $\Comm \mathcal{C}^{\bN^{\op}}$. In the setting of categories enriched in $\mathcal{Top}_*,$ based compactly-generated weak-Hausdorff spaces, this is proven in Proposition 22.1 of~\cite{MR1806878}. In the setting of categories enriched in a model category $\mathcal{C}$ satisfying Running Assumption \ref{ra:1} the same proof yields the desired result that the category of lax symmetric monoidal functors in $\mathcal{C}^{\mathcal{N}^{\op}}$ is equivalent to the category of commutative monoids in $\mathcal{C}^{\mathcal{N}^{\op}}$ with the enriched Day convolution symmetric monoidal product
\end{remark}
The following truncated versions of the category $\mathcal{N}^{\op}$ will also be useful for constructing decreasingly filtered commutative ring spectra. 
\begin{definition}\label{def J_n}
We will define a category enriched in a category $\mathcal{C}$ satisfying Running Assumption \ref{ra:1} and \ref{ra:3} called $J_n^{\op}$. The objects in $J_n$ are the subset $J_n \subset \mathbb{N}$ of all natural numbers consisting of all $i\in \mathbb{N}$ such that $i\le n$. We then define $J_n^{\op}$ as a sub-$\mathcal{C}$-category of $\mathcal{N}^{\op}$ so that 
$J_n^{\op}(i,j)= \mathcal{N}^{\op}(i,j) $
whenever $i,j\le n$. We give this category the structure of a symmetric monoidal $\mathcal{C}$-category $(J_n, \dot{+}, 0)$ by letting 
$ i \dot{+} j = \text{min} \{ i+j, n\}. $
\end{definition}
We now discuss the cofibrancy conditions needed on an object in $\Comm \mathcal{C}^{\mathbb{N}^{\op}}$ in order to produce a cofibrant decreasingly filtered commutative monoid in $\mathcal{C}$ in the sense of Definition \ref{def of dec filt comm mon}. This will be used to construct a certain class of filtered commutative ring spectra in Theorem \ref{post filt}. In Proposition~\ref{model structure on filtered objects}, we use a definition from~\cite{MR2713397}: an object $X$ in a pointed monoidal model category is {\em virtually cofibrant} if $(0\rightarrow X)\square -$ preserves cofibrations and preserves acyclic cofibrations. 
\begin{prop}\label{model structure on filtered objects}
Suppose $\mathcal{C}$ is a model category satisfying Running Assumption \ref{ra:1} and Running Assumption \ref{ra:2} and $\mathcal{B}$ is a small symmetric monoidal category enriched in $\mathcal{C}$ with virtually cofibrant function spaces \cite[Definition 2.2.12]{MR2713397}. Then there exists a model structure on the category of $\mathcal{C}$-functors $\mathcal{C}^{\mathcal{B}}$ called the \emph{projective model structure} where a fibration (respectively,  weak equivalence) is a natural transformation $\eta\co F\rightarrow G$ between functors $F,G\in \mathcal{C}^{\mathcal{B}}$ such that $\eta_X\co F(X)\rightarrow G(X)$ is a fibration (respectively,  weak equivalence) for each object $X$ in $\mathcal{B}$. In addition, because $\mathcal{C}$ satisfies Running Assumption \ref{ra:1}, $\mathcal{C}^{\mathcal{B}}$ is a symmetric monoidal model category under Day convolution and it satisfies the Schwede-Shipley monoid axiom~\cite{MR1734325}. 
\end{prop}
\begin{proof}
Since $\mathcal{C}$ is assumed to be combinatorial, the projective model structure exists on $\mathcal{C}^{\mathcal{B}}$ and it is also cofibrantly generated by Proposition 2.2.13 of \cite{MR2713397}. Since $\mathcal{C}$ satisfies Running Assumption \ref{ra:1} and Running Assumption \ref{ra:3}, Proposition 2.2.15 and Proposition 2.2.16 in \cite{MR2713397} imply the second part of the proposition. 
\end{proof}
\begin{remark}
We will only apply Proposition \ref{model structure on filtered objects} in the case where $\mathcal{B}$ is either the category $\mathcal{N}^{\op}$ of Definition \ref{natural numbers} or $J_n^{\op}$ of Definition \ref{def J_n}. It is easy to check that the categories $\mathcal{N}^{\op}$ and $J_n^{\op}$ have virtually cofibrant function spaces because 
the unit object $\mathbbm{1}$ is always virtually cofibrant in a symmetric monoidal model category by the unit axiom.
\end{remark}
\begin{lemma} \label{lem cof proj}
Let $\mathcal{C}$ satisfy Running Assumption \ref{ra:1} and Running Assumption \ref{ra:3} and let $\mathcal{N}^{\op}$ be the category defined Definition \ref{natural numbers}. Let $\mathcal{C}^{\mathcal{N}^{\op}}$ be the category of $\mathcal{C}$-functors $\mathcal{N}^{\op}\rightarrow \mathcal{C}$, equipped with the projective model structure. Let $P$ be a cofibrant object in $\mathcal{C}^{\mathcal{N}^{\op}}$. Then, for all $n\in\mathbb{N}$, the object $P(n)$ of $\mathcal{C}$ is cofibrant, and the morphism $P(n+1) \rightarrow P(n)$ is a cofibration in $\mathcal{C}$.
\end{lemma} 
\begin{proof} 
This is a consequence of Lemma 3.1 in \cite{160201515}. 
\end{proof}
The following definition is from Proposition~2.2.13 of \cite{MR2713397}, 
and it will be useful for describing the generating cofibrations in the projective model structure on $\mathcal{C}^{\mathcal{N}^{\op}}$. 
\begin{definition}\label{def for notation}
Suppose $\mathcal{C}$ is a symmetric monoidal model category satisfying Running Assumption \ref{ra:1} and Running Assumption \ref{ra:3}. Each integer $i$ corepresents a functor $\mathcal{N}^{\op}\rightarrow\mathcal{C}$, and we write $\mathcal{N}^{\op}(i, -)$ for this functor. Given a set of morphisms $\mathcal{M}$ in $\mathcal{C}$, we write $\mathcal{M}\otimes \mathcal{N}^{\op}$ for the collection of morphisms in $\mathcal{C}^{\mathcal{N}^{\op}}$ which are of the form $\overline{f}\Smash \mathcal{N}^{\op}(i,-)$ for some integer $i$ and some map $\overline{f}$ in $\mathcal{M}$.
\end{definition}
\begin{theorem}\label{lem scma}
Suppose $\mathcal{C}$ satisfies Running Assumption \ref{ra:1} and Running Assumption \ref{ra:3} and $\mathcal{N}^{\op}$ is the category defined in Definition \ref{natural numbers}. Then the projective model structure on $\mathcal{C}^{\mathcal{N}^{\op}}$ satisfies the strong commutative monoid axiom. 
\end{theorem}
\begin{proof}
By White \cite[Lem. A.1]{MR3666740}, it suffices to check the strong commutative monoid axiom on the generating cofibrations. Due to Proposition 4.5 of \cite{MR3380069}, a set of generating cofibrations in the projective model structure on $\mathcal{C}^{\mathcal{N}^{\op}}$ is the class of maps $\mathcal{J}\otimes \mathcal{N}^{\op}$, where $\mathcal{J}$ is a set of generating cofibrations of $\mathcal{C}$, and a set of generating acyclic cofibrations in the projective model structure is the set of maps $\mathbb{J}\otimes \mathcal{N}^{\op}$, where $\mathbb{J}$ is a set of generating acyclic cofibrations of $\mathcal{C}$ (see Definition \ref{def for notation} for the definition of the notation). 

Since 
\[ \mathcal{N}^{\op}(i,-)\simeq \left \{ \begin{array}{c} \mathbbm{1} \text{ if } j\le  i \\ 0 \text{ if} j>i \end{array} \right. \]
a set of generating cofibrations for $\mathcal{C}^{\mathcal{N}^{\op}}$ consists of the set of the natural transformations $f\co I\rightarrow J$ such that, for some $\bar{f}\co A\rightarrow B$ in $\mathcal{J}$, $f_j=\bar{f}\Smash \id_{\mathbbm{1}}\co A\Smash \mathbbm{1}\rightarrow B\Smash \mathbbm{1}$ for $j\le i$ and 
$f_j=\bar{f}\Smash \id_0\co A\Smash 0 \rightarrow B\Smash 0$ for $j> i$. 
Similarly, a set of generating acyclic cofibrations consists of the natural transformations $f\co I\rightarrow J$ such that there is some map $\bar{f}\co A\rightarrow B$ in $\mathbb{J}$ and $f_j=\bar{f}\Smash id_\mathbbm{1}\co A\Smash \mathbbm{1}\rightarrow B\Smash \mathbbm{1}$ for $j\le i$ and $f_j=\bar{f}\Smash id_0\co A\Smash 0 \rightarrow B\Smash 0$ for $j>i$. 

Let $h\co I\rightarrow J$ be an map in $\mathcal{J}\otimes \mathcal{N}^{\op}$ (respectively,  a map in $\mathbb{J}\otimes \mathcal{N}^{\op}$) with $\bar{h}:A\rightarrow B$ the corresponding map in $\mathcal{J}$ (respectively,  $\mathbb{J}$). Then we need to prove that $h^{\square n}/\Sigma_n$ is a cofibration (respectively,  an acyclic cofibration) in $\mathcal{C}^{\bN^{\op}}$ with the projective model structure. 
The case $n=1$ is vacuous and therefore we omit it. 

In the case $n=2$, we need to show that the map
\[ h^{\square 2}/\Sigma_2\co(I\otimes_{Day} J \coprod_{I\otimes_{Day} I} J\otimes_{Day} I )/\Sigma_2 \rightarrow (J\otimes_{Day}J)/\Sigma_2 \]
is an (acyclic) cofibration in the projective model structure. To see this, note that 
$I\otimes_{Day}I\cong (A\Smash A)\Smash \mathcal{N}^{\op}(2i,-),$ 
$ J\otimes_{Day}I\cong (B\Smash A)\Smash \mathcal{N}^{\op}(2i,-),$
$I\otimes_{Day}J\cong (A\Smash B)\Smash \mathcal{N}^{\op}(2i,-),$ and 
$J\otimes_{Day}J\cong (B\Smash B)\Smash \mathcal{N}^{\op}(2i,-).$
Therefore, the map $(h^{\square 2})/\Sigma_2$ is the map $(\bar{h}^{\square 2} \otimes \mathcal{N}^{\op}(2i, -))/\Sigma_2$ up to isomorphism. 
By definition of $\mathcal{N}^{\op}$, this is equivalent, up to isomorphism, to $((\bar{h}^{\square 2} )/\Sigma_2)  \Smash \mathcal{N}^{\op}(2i,-)$, that is, $(h^{\square 2})/\Sigma_2$ is a composite of isomorphisms and the map
$((\bar{h}^{\square 2} )/\Sigma_2)  \Smash \mathcal{N}^{\op}(2i,-)$. 
Since $\mathcal{C}$ satisfies the strong commutative monoid axiom, $(\bar{h}^{\square 2} )/\Sigma_2$ is a cofibration (respectively,  acyclic cofibration). We claim that the map $\left((\bar{h}^{\square 2} )/\Sigma_2\right )\Smash \mathcal{N}^{\op}(2i,-)$ is also a cofibration (respectively,  acyclic cofibration). 
We will give the argument that $\left((\bar{h}^{\square 2} )/\Sigma_2\right )\Smash \mathcal{N}^{\op}(2i,-)$ is a cofibration in the projective model structure when $h$ is a cofibration; the argument that $\left((\bar{h}^{\square 2} )/\Sigma_2\right )\Smash \mathcal{N}^{\op}(2i,-)$ is an acyclic cofibration when $h$ is an acyclic cofibration will be omitted because it is essentially the same. 
We need to show that for any acyclic fibration $X\rightarrow Y$ in $\mathcal{C}^{\mathcal{N}^{\op}}$ fitting into the diagram
\[ 
\xymatrix{
W \ar[r] \ar[d] & X \ar[d] \\
Z \ar[r] \ar@{-->}[ur]^{z} & Y }
\] 
there exists a lift $z\co Z\rightarrow X$, where $W=(I\otimes_{Day} J \coprod_{I\otimes_{Day} I} J\otimes_{Day} I )/\Sigma_2$ and $Z=(J\otimes_{Day}J)/\Sigma_2$. Since $\left((\bar{h}^{\square 2} )/\Sigma_2\right )\Smash\mathcal{N}^{\op}(2i,m): W_m\rightarrow Z_m$ is, up to isomorphism, the cofibration
\[ \left(A\Smash B \coprod_{B\Smash B} B\Smash A\right) /\Sigma_2\rightarrow \left(B\Smash B\right)/\Sigma_2 \]
when $m\le 2i$ and it is the map $0\rightarrow 0$ when $m>2i$ up to isomorphism, we can define the map 
$z\co Z\rightarrow X$ to be the map $0\rightarrow X_m$ when $m>2i$, we can define $z_{2i}\co Z_{2i}\rightarrow X_{2i}$ to be the lift in the diagram
\[
\xymatrix{
W_{2i}\ar[r] \ar[d] & X_{2i} \ar[d]\\
Z_{2i} \ar[r] \ar@{-->}[ur]^{z_{2i}} & Y_{2i}, }
\]
which we know exists because $W_{2i}\rightarrow Z_{2i}$ is a cofibration and $X_j\rightarrow Y_j$ is an acyclic fibration for all $j$ (because $X\rightarrow Y$ is an acyclic fibration in the projective model structure). For all $j<2i$, we can then define $z_j$ as the composite
\[ Z_j = Z_{2i} \stackrel{z_{2i}}{\longrightarrow} X_{2i} \rightarrow X_{2i-1} \rightarrow \dots \rightarrow X_j.\] 
There is no difficulty with the need for our lift maps to be compatible for various choices of $j$, since in the range $j\leq 2i$, the sequences $W_{\bullet}$ and $Z_{\bullet}$ are constant, and above this range, these sequences are zero! We have therefore defined a map $z_m\co Z_m\rightarrow X_m$ for each $m\ge 0$ and the diagrams
\[ 
\xymatrix{
Z_m \ar[r]\ar[d] & X_m \ar[d] \\
Z_{m-1} \ar[r] & X_{m-1} }\]
clearly commute for all $m\ge 0$ by the definition of $z_m$. 

The same type of argument works for $n>2$ giving 
\[ (h^{\square n})/\Sigma_n \cong ((\bar{h}^{\square n} )/\Sigma_n)  \Smash \mathcal{N}^{\op}(ni,-) \]
which is a cofibration by a proof essentially the same as the one above. Here the isomorphism is in the arrow category, but this is the same as saying $(h^{\square n})/\Sigma_n$ is a composite of isomorphisms and the map $((\bar{h}^{\square n} )/\Sigma_n)  \Smash \mathcal{N}^{\op}(ni,-)$, which is a cofibration and therefore $(h^{\square n})/\Sigma_n$ is also a cofibration. 
\end{proof}

\begin{lemma} 
Let $\mathcal{C}$ be a model category satisfying Running Assumption \ref{ra:1} and Running Assumption \ref{ra:3} and let $J_n^{\op}$ be the category defined in \ref{def J_n}. Then the projective model structure on $\mathcal{C}^{J_n^{\op}}$ satisfies the strong commutative monoid axiom. The cofibrant objects in the projective model structure are \emph{exactly} the objects $P\in \mathcal{C}^{J_n^{\op}}$ such that $P(i)$ is cofibrant in $\mathcal{C}$ for all $0\le i\le n$ and $P(i)\rightarrow P(i-1)$ is a cofibration in $\mathcal{C}$. 
\end{lemma}
\begin{proof}
The proof that $\mathcal{C}^{J_n^{\op}}$ satisfies the strong commutative monoid axiom is the same as the proof of Theorem \ref{lem scma} and therefore we omit it. The second part of the lemma is again a consequence of Lemma 3.1 in \cite{160201515}, as the authors remark just after the lemma. 
\end{proof}
\begin{remark}
There are four different model structures that we use here, which are all commonly referred to as the projective model structure. 
The model category structure on $\Sp^{\mathcal{N}^{\op}}$ discussed in Proposition \ref{model structure on filtered objects}, 
the model structure created by the forgetful functor $U:\Comm \Sp^{\mathcal{N}^{\op}}\rightarrow \Sp^{\mathcal{N}^{\op}}$, 
the model structure on algebras over a commutative monoid $I$ in $\Comm \Sp^{\mathcal{N}^{\op}}$ created by the forgetful functor to $\Comm \Sp^{\mathcal{N}^{\op}}$, and the model structure on modules over a commutative monoid $I$ in 
$\Comm \Sp^{\mathcal{N}^{\op}}$ created by the forgetful functor from $I$-modules (or equivalently symmetric $I$-bimodules) to $\Sp^{\mathcal{N}^{\op}}.$ Each of these will be referred to as the projective model structure, and it should be clear from the context which of the four is meant. Each of these model structures also make sense when $\mathcal{N}^{\op}$ is replaced by $J_n^{\op}$. 
Since the model structure on $\Sp$ is cofibrantly generated, each these projective model structures is cofibrantly generated by Theorem \ref{model structure on filtered objects}, \cite[Theorem 3.2]{MR3666740}, and  \cite[Theorem 4.1]{MR1734325}. 
Consequently, as remarked in Hovey \cite{MR1650134} we have functorial factorization in each of these model categories. 
\end{remark}

\subsection{Whitehead towers.}\label{whitehead}
For this section, we will abbreviate and write $\Sp$ for the category of symmetric spectra of pointed simplicial sets with the positive flat stable model structure. This category satisfies Running Assumptions \ref{ra:1} and \ref{ra:2}, as discussed in Section \ref{conv and ra}. This model category also satisfies Running Assumption \ref{ra:3}: since it is combinatorial by Hovey-Shipley-Smith \cite{MR1695653}, it satisfies the strong commutative monoid axiom by Theorem 5.7 in \cite{MR3666740}, and it is also right proper by Theorem 5.4.2 in \cite{MR1695653}. 
The goal of this section is to produce a cofibrant decreasingly filtered commutative monoid in $\Sp$ as a specific multiplicative model for the Whitehead tower of a connective commutative monoid in $\Sp$. 

As a consequence of Lemma \ref{lem cof proj}, a cofibrant object in the category $\Comm \Sp^{\mathcal{N}^{\op}}$ equipped with the projective model structure is, in particular, a cofibrant decreasingly filtered commutative monoid in $\Sp$ (also see Remark \ref{rem filt comm mon}). 

\begin{theorem} \label{post filt} Let $R$ be a cofibrant connective commutative monoid in $\Sp$. Then there exists a cofibrant decreasingly filtered commutative monoid in $\Sp$
\[ \xymatrix{ \dots \ar[r]  & \tau_{\ge 2} R \ar[r] & \tau_{\ge 1} R \ar[r] & \tau_{\ge 0}R  } \] 
with structure maps 
\[ \rho_{i,j}\co \tau_{\ge i } R \Smash \tau_{\ge j} R \longrightarrow \tau_{\ge i+j}R \]
such that $R\simeq \tau_{\ge 0}R$ and the maps 
$\tau_{\ge n}R\rightarrow \tau_{\ge 0}R$ induce an isomorphism on $\pi_k$ for $k\ge n$ and $\pi_k(\tau_{\ge n}R)\cong 0$ for $k<n$. This cofibrant decreasingly filtered commutative monoid in $\Sp$ is denoted $\tau_{\ge \bullet}R$. 
\end{theorem}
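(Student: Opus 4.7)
The plan is to combine Theorem~\ref{thm dfcm} with an inductive rectification argument in the model categories $\Comm(\mathcal{C}^{J_n^{\op}})$ of $n$-truncated filtered commutative monoids introduced just above. Starting from the Postnikov tower of square-zero extensions
\[ \dots \to \tau_{\leq 2} R \to \tau_{\leq 1} R \to \tau_{\leq 0} R \]
supplied by Theorem~\ref{thm dfcm}, I would define the underlying object $\tau_{\geq n} R$ to be the homotopy fiber, taken in $\Comm(\mathcal{C})$, of the $E_{\infty}$-projection $R \to \tau_{\leq n-1} R$; this automatically yields the correct homotopy groups and a canonical map $\tau_{\geq n} R \to R$. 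The content of the theorem then lies entirely in producing strictly associative, commutative, unital structure maps $\rho_{i,j} \colon \tau_{\geq i} R \smash \tau_{\geq j} R \to \tau_{\geq i+j} R$, and arranging the full $\mathbb{N}$-indexed tower to be a cofibrant object of $\Comm(\mathcal{C}^{\mathbb{N}^{\op}})$.

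The homotopical existence of the maps $\rho_{i,j}$ is a consequence of the square-zero structure of the Postnikov tower. The composite
\[ \tau_{\geq i} R \smash \tau_{\geq j} R \to R \smash R \to R \to \tau_{\leq i+j-1} R \]
is null-homotopic by descending induction through the Postnikov tower: at each stage $m < i+j$, the square-zero extension $\tau_{\leq m} R \to \tau_{\leq m-1} R$ by $\Sigma^m H \pi_m R$ is, per Definition~\ref{sz}, classified by a derivation class in $TAQ$-cohomology, and the connectivity of $\tau_{\geq i} R \smash \tau_{\geq j} R$ forces the obstruction to descent through this stage to vanish. The triviality of the multiplication on the ideal summand of the trivial square-zero extension $A \ltimes M$ is then exactly what lets one promote these homotopical factorizations to strict maps of commutative monoids, compatibly across different values of $i+j$.

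To rectify this data into a genuine cofibrant filtered commutative monoid, I would induct on $n$, at each stage constructing a cofibrant object $\tau_{\geq \bullet}^{(n)} R$ of $\Comm(\mathcal{C}^{J_n^{\op}})$ realizing the Whitehead tower through level $n$, and extending to $\Comm(\mathcal{C}^{J_{n+1}^{\op}})$ by factoring the canonical map from the already-constructed multiplication data into the next homotopy fiber as a cofibration followed by an acyclic fibration (using the functorial factorizations guaranteed by Running Assumption~\ref{ra:1}). Passage to the inverse limit, followed by a final cofibrant replacement in $\Comm(\mathcal{C}^{\mathbb{N}^{\op}})$, yields an object to which Lemma~\ref{lem cof proj} applies, verifying the sectionwise cofibrancy and the cofibrancy of the maps $\tau_{\geq n+1} R \to \tau_{\geq n} R$ required by Definition~\ref{def of dec filt obj}. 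The main obstacle I anticipate is compatibility at the inductive step: the lift chosen at level $n+1$ must simultaneously be compatible with \emph{all} previously constructed structure maps $\rho_{i,j}$ with $i+j \leq n$, which requires carefully tracking the indeterminacy in the space of possible derivation classes and verifying that a coherent simultaneous choice exists at every stage.
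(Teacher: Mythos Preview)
Your overall strategy---use the Postnikov tower of square-zero extensions, build truncated filtered monoids inductively in $\Comm(\mathcal{C}^{J_n^{\op}})$, and cofibrantly replace at the end---matches the paper's. Two points in the details need correction.

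First, the homotopy fiber of $R \to \tau_{\leq n-1} R$ must \emph{not} be taken in $\Comm(\mathcal{C})$: the initial object there is the unit $\mathbbm{1}$, not $0$, so that pullback does not produce the $n$-connected cover. More fundamentally, for $n \geq 1$ the objects $\tau_{\geq n} R$ are not commutative monoids at all---they play the role of powers of an ideal, and the structure maps $\rho_{i,j}$ are maps in $\mathcal{C}$, not monoid maps. The paper takes the fiber in the category of $R$-modules; this gives the correct homotopy type, and the $R$-module action supplies $\rho_{0,n}$ and $\rho_{n,0}$ (and hence unitality) for free.

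Second, the passage from ``null-homotopic composite'' to ``strict, coherently associative $\rho_{i,j}$'' is where the real content lies, and your outline leaves this vague. The paper's device is concrete: at the inductive step, define $\tau_{\geq n} R$ as a \emph{strict} pullback in $R$-modules of $\overline{\tau_{\geq n-1} R} \to \Sigma^{n-1} H\pi_{n-1} R \leftarrow 0$ (after factoring the left map as a fibration). The square-zero structure is used to show that the colimit $P_n$ of all products $\tau_{\geq i} R \smash \tau_{\geq j} R$ from the previous stage maps to $\Sigma^{n-1} H\pi_{n-1} R$ through zero, so the strict universal property of the pullback yields a single map $P_n \to \tau_{\geq n} R$, from which all the $\rho_{i,j}$ with $i+j \geq n$ are read off. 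The coherence problem you anticipate is then resolved by the observation that $\tau_{\geq n} R \to \tau_{\geq n-1} R$, being the pullback of the monomorphism $0 \to \Sigma^{n-1} H\pi_{n-1} R$, is itself a monomorphism: two candidate maps into $\tau_{\geq n} R$ agree as soon as they agree after composing into $\tau_{\geq n-1} R$, which reduces associativity at level $n$ to the inductive hypothesis. This monomorphism trick is the missing ingredient in your sketch.
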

\begin{proof}
We will prove the theorem by induction. First, we will consider $R$ as a cofibrant object $\Comm\Sp^{J_0^{\op}}=\Comm\Sp$ and we will produce a cofibrant object in $\Comm\Sp^{J_1^{\op}}$ in projective model structure. To construct $\tilde{\tau}_{\ge 1} R$, we consider the map of commutative $R$-algebras $R\to H\pi_0R$, constructed as in \cite[Thm. 8.1]{MR1732625}. This map is not usually a fibration (in fact it is a cofibration as discussed in Remark \ref{gabes remark on cofibrancy of degree 0 quotient}), 
so we factor it as a composite $R\to  \tilde{\tau}_{\ge 0} R  \to  H\pi_0R $ of an acyclic cofibration and a fibration in the category of commutative $R$-algebras in $\Sp$. The resulting object $ \tilde{\tau}_{\ge 0} R$ is a cofibrant commutative monoid in $\Sp$ equipped with a fibration $\tilde{\tau}_{\ge 0} R\to H\pi_0R$, which is a map of $\tilde{\tau}_{\ge 0} R$-algebras. 

We then define $\tilde{\tau}_{\ge 1} R$ to be the fiber of the map $\tilde{\tau}_{\ge 0} R \rightarrow H\pi_0R$ in the category of $\tilde{\tau}_{\ge 0} R$-modules (equivalently symmetric $\tilde{\tau}_{\ge 0} R$-bimodules). The symmetric $\tilde{\tau}_{\ge 0} R$-bimodule structure produces an object $f_1:\tilde{\tau}_{\ge 1} R \rightarrow \tilde{\tau}_{\ge 0} R$ in $\Comm\Sp^{J_1^{\op}}$ with action maps $\rho_{i,j}$ for $i,j\in \ob J_1^{\op}$ defined as follows: the map $\rho_{0,0}\co \tilde{\tau}_{\ge 0} R\Smash \tilde{\tau}_{\ge 0} R\rightarrow \tilde{\tau}_{\ge 0} R$ is the multiplication map, the maps $\rho_{1,0}$ and $\rho_{0,1}$ are the right and left $\tilde{\tau}_{\ge 0} R$-module structure maps, and the map $\rho_{1,1}$ is the composite 
\[ \xymatrix{ \rho_{1,1}\co \tilde{\tau}_{\ge 1} R \Smash \tilde{\tau}_{\ge 1} R \ar[rr]^{\id_{\tilde{\tau}_{\ge 1} R}\Smash f_1} && \tilde{\tau}_{\ge 1} R\Smash\tilde{\tau}_{\ge 0} R \ar[r]^(.6){\rho_{1,0}}& \tilde{\tau}_{\ge 1} R} \]
or equivalently, because $\tilde{\tau}_{\ge 1} R$ is a symmetric $\tilde{\tau}_{\ge 0}R$-bimodule, 
\[ \xymatrix{ \rho_{1,1}\co \tilde{\tau}_{\ge 1} R \Smash \tilde{\tau}_{\ge 1} R \ar[rr]^{f_1\Smash \id_{\tilde{\tau}_{\ge 1} R}} &&\tilde{\tau}_{\ge 0} R \Smash \tilde{\tau}_{\ge 1} R \ar[r]^(.6){\rho_{0,1}} & \tilde{\tau}_{\ge 1} R. }\]
These maps are easily seen to satisfy the necessary associativity, commutativity, and compatibility axioms. We then cofibrantly replace $\tilde{\tau}_{\ge 1}R \rightarrow \tilde{\tau}_{\ge 0} R$ in the projective model structure on $\Comm\Sp^{J^{\op}_1}$ to produce an object $\tau_{\ge \bullet}^{\le 1}R$ with the property that $R\simeq \tau_{\ge 0}^{\le 1}R$ and the map $\tau_{\ge 1}^{\le 1}R\to \tau_{\ge 0}^{\le 1}R$ induces an isomorphism on $\pi_k$ for $k\ge 1$ and $\pi_k(\tau_{\ge 1}^{\le 1}R)\cong 0$ for $k<1$. Recall that the effect of cofibrantly replacing in this model structure is that the map is replaced by a cofibration and the objects are replaced by cofibrant objects without changing their homotopy type. This completes the base step in the induction. Note that since we have functorial factorization in $\Sp$ and $\Comm\Sp^{J^{\op}_1}$, this construction is entirely functorial. 

Now, for the inductive step: suppose we can functorially construct a cofibrant object 
$\tau^{\le n-1}_{\ge \bullet}(R)\in\ob \Comm\Sp^{J^{\op}_{n-1}}$
for an arbitrary $n\ge 1$ with the property that $R\simeq \tau^{\le n-1}_{\ge 0}R$ and the map $\tau^{\le n-1}_{\ge j}R \to \tau^{\le n-1}_{\ge 0}R$ induces an isomorphism on $\pi_k$ for $n-1\ge k\ge j$ and $\pi_k(\tau^{\le n-1}_{\ge j}R)\cong 0$ for $k<j$, and there is an acyclic cofibration $R\to \tau^{\le n-1}_{\ge 0}R$. 
First, note that, due to Basterra \cite[Thm. 8.1]{MR1732625}, we can construct a map of commutative $R$-algebras $R\rightarrow  \gamma_{\le n-1} R$ where $\gamma_{\le n-1}R$ is a commutative $R$-algebra with the property that the map $R \to \gamma_{\le n-1} R$ induces an isomorphism on $\pi_k$ for $k\le n-1$ and $\pi_k(\gamma_{\le n-1}R)\cong 0$ for $k\ge n$. By the assumed functoriality of the construction of $\tau^{\le n-1}_{\ge \bullet}(R)$, we get a map $\tau_{\ge \bullet}^{\le n-1}(R)\rightarrow \tau_{\ge \bullet}^{\le n-1}(\gamma_{\le n-1} R)$ of commutative $\tau_{\ge \bullet}^{\le n-1}(R)$-algebras in $\Comm\Sp^{{J_{n-1}}^{\op}}$. We first fibrantly replace $\tau_{\ge \bullet}^{\le n-1}(\gamma_{\le n-1} R)$ in the category of commutative $\tau_{\ge \bullet}^{\le n-1}(R)$-algebras, to produce a fibrant object $\bar{\tau}_{\ge \bullet}^{\le n-1}(\gamma_{\le n-1} R)$, which still receives a map from $\tau_{\ge \bullet}^{\le n-1}(R)$. We then
functorially factor the map $\tau_{\ge \bullet}^{\le n-1}(R)\rightarrow \bar{\tau}_{\ge \bullet}^{\le n-1}(\gamma_{\le n-1} R)$ as an acyclic cofibration followed by a fibration $\tau_{\ge \bullet}^{\le n-1}(R)\rightarrow  \tilde{\tau}^{\le n-1}_{\ge \bullet}(R)\rightarrow \bar{\tau}_{\ge \bullet}^{\le n-1}(\gamma_{\le n-1} R)$ in the category of commutative $\tau_{\ge \bullet}^{\le n-1}(R)$-algebras.

The following argument combines two ideas. As in the base step, we work in the category of symmetric $\tilde{\tau}_{\ge \bullet}^{\le n-1}(R)$ bimodules throughout. 
This builds in the commutativity, associativity, unitality and compatibility of most of the structure maps $\rho_{i,j}:\tilde{\tau}_{\ge i}^{\le n}(R)\wedge \tilde{\tau}_{\ge j}^{\le n}(R) \to \tilde{\tau}_{\ge i\dot{+}j }^{\le n}(R)$, where $\tilde{\tau}_{\ge i }^{\le n}(R):=\tilde{\tau}_{\ge i }^{\le n-1}(R)$ for $i<n$, as we discuss below. 
Unfortunately, this alone does not build in the associativity of the maps $\rho_{i,j}:\tilde{\tau}_{\ge i}^{\le n}(R)\wedge \tilde{\tau}_{\ge j}^{\le n}(R) \to \tilde{\tau}_{\ge n }^{\le n}(R)$ when $i,j<n$ and $i+j\ge n$. 
We therefore build this into our definition of $\tilde{\tau}_{\ge n}^{\le n}R$ as well. To combine these two ideas we must describe how to construct the associativity diagrams that we use in the category of $\tilde{\tau}_{\ge \bullet}^{\le n-1}(R)$ bimodules. 

Recall that a symmetric $\tilde{\tau}_{\ge \bullet}^{\le n-1}R$-bimodule is a functor $X\co J_{n-1}^{\op}\rightarrow\Sp$ along with natural transformations 
		\[ X\otimes_{Day} \tilde{\tau}_{\ge \bullet}^{\le n-1}R\rightarrow \tilde{\tau}_{\ge \bullet}^{\le n-1}R, \]
		\[ \tilde{\tau}_{\ge \bullet}^{\le n-1}R\otimes_{Day}X \rightarrow \tilde{\tau}_{\ge \bullet}^{\le n-1}R \]
satisfying the usual associativity and commutativity axioms. Let $F$ and $G$ be constant symmetric $\tilde{\tau}_{\ge \bullet}^{\le n-1}R$-bimodules such that $F(\ell)$ is 
\[  \colim_{i\dot{+}j\ge n} \tilde{\tau}_{\ge i}^{\le n-1}R \wedge \tilde{\tau}_{\ge j}^{\le n-1}R   \] 
for each $0 \le \ell \le n-1$ and $G(\ell)$ is 
\[  \colim_{i\dot{+}j\dot{+}k\ge n} \tilde{\tau}_{\ge i}^{\le n-1}R \wedge \tilde{\tau}_{\ge j}^{\le n-1}R  \wedge \tilde{\tau}_{\ge k}^{\le n-1}R \] 
for each $0 \le \ell \le  n-1$. To make this precise we need to define the action maps, which are natural transformations 
	\[
		\begin{array}{cc}  
			F\otimes_{Day} \tilde{\tau}_{\ge \bullet}^{\le n-1}R \rightarrow F, &
			  \tilde{\tau}_{\ge \bullet}^{\le n-1}R  \otimes_{Day}F \rightarrow F, \\
			\tilde{\tau}_{\ge \bullet}^{\le n-1}R  \otimes_{Day} G \rightarrow G, & 
			G \otimes_{Day} \tilde{\tau}_{\ge \bullet}^{\le n-1}R  \rightarrow G. 
		\end{array}
	\]
By the definition of Day convolution
\[(F\otimes_{Day} \tilde{\tau}_{\ge \bullet}^{\le n-1}R)(\ell)=\colim_{a\dot{+}b\ge \ell} \left ( F(a)   \Smash  \tilde{\tau}_{\ge b}^{\le n-1}R \right )\]
and since $F$ is constant, 
\[
 \begin{array}{l}\colim_{a\dot{+}b\ge \ell} \left ( F(a)   \Smash  \tilde{\tau}_{\ge b}^{\le n-1}R \right ) \cong\\ 
\colim_{i\dot{+}j\ge n} (\tilde{\tau}_{\ge i}^{\le n-1}R \wedge \tilde{\tau}_{\ge j}^{\le n-1}R )  \Smash  \tilde{\tau}_{\ge 0}^{\le n-1}R. \end{array}\]
Since smashing with a cofibrant object commutes with colimits, there is a weak equivalence
\[ 
\begin{array}{l}
\colim_{i\dot{+}j\ge n} (\tilde{\tau}_{\ge i}^{\le n-1}R \wedge \tilde{\tau}_{\ge j}^{\le n-1}R)   \Smash  \tilde{\tau}_{\ge 0}^{\le n-1}R\simeq  \\
\colim_{i\dot{+}j\ge n} (\tilde{\tau}_{\ge i}^{\le n-1}R \wedge \tilde{\tau}_{\ge j}^{\le n-1}R  \Smash  \tilde{\tau}_{\ge 0}^{\le n-1}R)\end{array}
\]
and we can use the maps $\rho_{j,0}:\tilde{\tau}_{\ge j}^{\le n-1}R\wedge \tilde{\tau}_{\ge 0}^{\le n-1}R \rightarrow \tilde{\tau}_{\ge j }^{\le n-1}R$ to define a map 
\[ \colim_{i\dot{+}j\ge n} (\tilde{\tau}_{\ge i}^{\le n-1}R \wedge \tilde{\tau}_{\ge j}^{\le n-1}R  \Smash  \tilde{\tau}_{\ge 0}^{\le n-1}R)\rightarrow  \colim_{i\dot{+}j\ge n} (\tilde{\tau}_{\ge i}^{\le n-1}R \wedge \tilde{\tau}_{\ge j}^{\le n-1}R  )\]
which provides a natural transformation $F\otimes_{Day} \tilde{\tau}_{\ge \bullet}^{\le n-1}R \rightarrow F$ in the evident way. 
The remaining action natural transformation for $F$ is defined in an analogous way and the these two maps satisfy the commutativity and associativity axioms by the inductive hypothesis.
We then define the two action natural transformations of $G$ in the same way and they also satisfy the commutativity and associativity axioms by the inductive hypothesis. 

Here is a sketch of our next steps in this proof: we will now define $\tilde{\tau}_{\ge n}^{\le n}R$ in such a way that it is automatically equipped with associative, commutative, and compatible maps 
\[ \rho_{i,j}:\tilde{\tau}_{\ge i}^{\le n}R \wedge \tilde{\tau}_{\ge j}^{\le n}R \rightarrow  \tilde{{\tau}}_{\ge n}^{\le n} R\] 
for $i+j\ge n$, where  $\tilde{\tau}_{\ge j}^{\le n}R$ is defined to be $\tilde{\tau}_{\ge j}^{\le n-1}R$ for $0\le j\le n-1$. 
To build in the associativity of the structure maps $\rho_{i,j}\co\tilde{\tau}_{\ge i}^{\le n}R \wedge \tilde{\tau}_{\ge j}^{\le n}R \to  \tilde{{\tau}}_{\ge n}^{\le n} R$ for $0<i,j<n$ and $i+j\ge n$, we will encode all of the necessary associativity diagrams into one pushout, where each object in the pushout is a colimit of a truncated directed cube diagram. We will then show that the map from this pushout to a certain given object is nulhomotopic and therefore factors through a single contractible object. We then define $\tilde{\tau}_{\ge n}^{\le n}R$ as the pullback of a diagram involving this contractible object. This is arranged so that we only need to make one choice of contractible object at each stage of the induction. We also work in the category of symmetric $\tilde{\tau}_{\ge \bullet}^{\le n-1}R$-bimodules throughout this process in order to encode the remaining associativity, commutativity, compatibility, and unitality diagrams.

Note that each map 
\[ \tilde{\tau}_{\ge i}^{\le n-1}R  \wedge \tilde{\tau}_{\ge j}^{\le n-1}R \rightarrow \tau_{\ge m}^{\le n-1} R  \rightarrow \tau_{\ge n-1}^{\le n-1}(\gamma_{\le n-1}R) \]
is nulhomotopic for $i+j\ge n$ because $\pi_\ell( \tilde{\tau}_{\ge i}^{\le n-1}R  \wedge \tilde{\tau}_{\ge j}^{\le n-1}R)\cong 0$ for $\ell<n$ and $\pi_\ell(\tau_{\ge m}(\gamma_{\le n-1}R))\cong 0$ for $\ell>n-1$. 
We claim that the composite map from the Bousfield-Kan homotopy colimit (in the sense of our Appendix B) of the diagram
\begin{equation}\label{eg assoc}
\xymatrix{
\colim_{i\dot{+}j\dot{+}k\ge n} \tilde{\tau}_{\ge i}^{\le n-1}R \wedge \tilde{\tau}_{\ge j}^{\le n-1}R  \wedge \tilde{\tau}_{\ge k}^{\le n-1}R  \ar[r] \ar[d] & \colim_{i\dot{+}j\dot{+}k\ge n} \tilde{\tau}_{\ge i\dot{+}j}^{\le n-1}R \wedge \tilde{\tau}_{\ge k}^{\le n-1}R  \\
\colim_{i\dot{+}j\dot{+}k\ge n} \tilde{\tau}_{\ge i}^{\le n-1}R \wedge \tilde{\tau}_{\ge j\dot{+}k}^{\le n-1}R &    }
\end{equation}
to $\tilde{\tau}_{\ge m}^{\le n-1} R$, followed by the map  $\tilde{\tau}_{\ge m}^{\le n-1} R\rightarrow \tilde{\tau}_{\ge m}^{\le n-1}(\gamma_{\le n-1}R)$, is nulhomotopic as well. (Note that this will follow if the map from the diagram \eqref{eg assoc} to $\tilde{\tau}_{\ge n-1}^{\le n-1} R$ followed by the map  $\tilde{\tau}_{\ge n-1}^{\le n-1} R\rightarrow \tilde{\tau}_{\ge n-1}^{\le n-1}(\gamma_{\le n-1}R)$ is nulhomotopic since the composite of a nulhomotopic map with any other map is always nulhomotopic.)

To prove the claim above, we use the Bousfield-Kan spectral sequence of Theorem \ref{bkss existence thm}, which is discussed further in our self-contained Appendix~\ref{BK}:
\[ (R^s_E\lim_{d\in\mathcal{D}})\left( Z^{t}A(d)\right) \Rightarrow  Z^{s+t} \hocolim A .\] 
We will apply this spectral sequence when $\mathcal{D}$ is one of the small categories
\[\mathcal{P}_n^{3}=\{ (i,j,k)\in \mathbb{N}^{3} : i\dot{+}j\dot{+}k\ge n \text{ and }0<  i,j,k<n \}^{\op}\subseteq (\mathbb{N}^{3})^{\op},\]
 \[ \mathcal{P}_n^{2}=\{(\ell,m)\in \mathbb{N}^{2}:\ell\dot{+}m\ge n\text{ and } 0<\ell,m<n \}^{\op}\subseteq (\mathbb{N}^{2})^{\op},\] 
 or the pushout category $\mathcal{PO}$, defined in \eqref{def of PO}. We will take $Z$ to be $\tau_{\ge n-1}(\gamma_{\le n-1}R)$. 

Let $H\co \mathcal{P}_n^2\rightarrow \mathcal{C}$ be the functor given by $H(i,j) =  \tilde{\tau}_{\ge i}^{\le n-1}R \wedge \tilde{\tau}_{\ge j}^{\le n-1}R$. 
A colimit of a functor on $\mathcal{P}_n^{2}$ can be written as an iterated pushout, so we can prove a vanishing result for $[R_{E}^{s}\lim_{d\in(\mathcal{P}_n^{2})^{\op}}](Z^tH(d))$ for $s-t=0$ by proving an appropriate vanishing result for $R_E^s\lim_{d\in \mathcal{PO}^{\op}}(Z^t(\iota_*H)(d))$, where $\mathcal{PO}$ is the category indexing pushouts, as in~\eqref{def of PO}, and $\iota_*H$ is the restriction of $H$ along one of the inclusions $\iota\co\mathcal{PO}\hookrightarrow\mathcal{P}_n^{2}$. 
By Example~17.10 of \cite{dugco} and our Theorem~\ref{answer to question}, given a functor $\mathcal{F}:\mathcal{PO} \rightarrow \mathcal{C}$ and an object $Y$ of $\mathcal{C}$, the map $[\hocolim \mathcal{F}, Y]\rightarrow \lim [\mathcal{F},Y]$ is an isomorphism if $R^1_E\lim_{d\in\mathcal{PO}^{\op}} [\Sigma \mathcal{F}(d),Y] \cong [\Sigma \mathcal{F}(1^{\prime}),Y]/\left( [\Sigma \mathcal{F}(0),Y] + [\Sigma \mathcal{F}(1^{\prime}),Y]\right)$ vanishes. 

So we carry out an induction: let $\mathcal{I}_0: \mathcal{PO} \rightarrow \mathcal{C}$ be the functor given by 
\begin{align*}
 \mathcal{I}_0(1^{\prime}) &= \tilde{\tau}_{\ge 2}^{\leq n-1}R \wedge \tilde{\tau}_{\ge n-1}^{\leq n-1}R,\\
 \mathcal{I}_0(0) &= \tilde{\tau}_{\ge 1}^{\leq n-1}R \wedge \tilde{\tau}_{\ge n-1}^{\leq n-1}R,\\
 \mathcal{I}_0(1) &= \tilde{\tau}_{\ge 2}^{\leq n-1}R \wedge \tilde{\tau}_{\ge n-2}^{\leq n-1}R.
\end{align*}
Vanishing of $[\Sigma \mathcal{I}_0(1^{\prime}), \tilde{\tau}_{\ge n-1}(\gamma_{\le n-1}R)]$ and of $[ \mathcal{I}_0(d), \tilde{\tau}_{\ge n-1}(\gamma_{\le n-1}R)]$ for $d=0,1$ is standard (by properties of maps from sufficiently connective spectra to sufficiently coconnective spectra), so $[\hocolim \mathcal{I}_0, Z]$ vanishes.

That was the initial step. For the inductive step, let $\mathcal{I}_j: \mathcal{PO} \rightarrow \mathcal{C}$ be the functor given by 
\begin{align*}
 \mathcal{I}_j(1^{\prime}) &= \tilde{\tau}_{\ge j+2}^{\leq n-1}R \wedge \tilde{\tau}_{\ge n-j-1}^{\leq n-1}R,\\
 \mathcal{I}_j(0) &= \hocolim \mathcal{I}_{j-1},\\
 \mathcal{I}_j(1) &= \tilde{\tau}_{\ge j+2}^{\leq n-1}R \wedge \tilde{\tau}_{\ge n-j-2}^{\leq n-1}R.
\end{align*}
and suppose we have already shown that $[\hocolim\mathcal{I}_{j-1},Z]$ vanishes. Again, 
\[ [\Sigma \mathcal{I}_j(1^{\prime}), \tilde{\tau}_{\ge n-1}(\gamma_{\le n-1}R)] \cong [ \mathcal{I}_j(1), \tilde{\tau}_{\ge n-1}(\gamma_{\le n-1}R)]\cong 0\] 
due to standard properties of maps from sufficiently connective spectra to sufficiently coconnective spectra, so $[\hocolim \mathcal{I}_j,Z]$ vanishes. The case $j=n-2$ completes the induction, since the natural map 
\[ \hocolim \mathcal{I}_{n-2} \rightarrow \hocolim H\]  is an equivalence.

Therefore, $\hocolim H\rightarrow \tilde{\tau}_{\ge n-1}^{\le n-1}(\gamma_{\le n-1} R)$ is nulhomotopic, and hence the 
composite \[ \hocolim H\rightarrow \tilde{\tau}_{\ge n-1}^{\le n-1}(\gamma_{\le n-1} R)\rightarrow \tilde{\tau}_{\ge n-1}^{\le n-1}(\gamma_{\le n-1} R)\] is nulhomotopic. 

Now we claim that the diagram $A\co \mathcal{P}_n^{3}\rightarrow \Sp$ given by 
\[ (i,j,k)\mapsto \tilde{\tau}_{\ge i}^{\le n-1}R \wedge \tilde{\tau}_{\ge j}^{\le n-1}R \wedge \tilde{\tau}_{\ge k}^{\le n-1} R \] 
is actually Reedy cofibrant and hence, since $\mathcal{P}_n^{3}$ is a direct category (as in Remark~\ref{projective coincides with reedy}), it is projectively cofibrant. 
To see this, it is sufficient to check that each object in the diagram is cofibrant, each map is a cofibration, and for each inclusion of a square-shaped diagram the map from the pushout of the upper left horn to the terminal vertex is cofibration by an elementary check of Reedy's conditions \cite{reedy}. 
Each object in the diagram is cofibrant and each map in the diagram is a cofibration by definition. 
Also, one can easily check that for each inclusion of a square-shaped diagram the map from the pushout of the upper left horn to the terminal vertex is a cofibration by iterated use of the pushout product axiom (cf Lemma \ref{connectivity of truncated cube colims} where essentially the same result is proven in more detail). 
The fact that the canonical map from $\left| \sr(\mathcal{F})\right| \rightarrow \colim\mathcal{F}$ is a weak equivalence for projectively cofibrant $A$ is Theorem \ref{thm from gambino}, which is a consequence of Theorems 3.2 and 3.3 in \cite{MR2586997}. 

The colimit of the diagram $A\co \mathcal{P}_n^{3}\rightarrow \Sp$ described above 
can be written as an iterated colimit over directed cubes with terminal vertex removed. 
So, by an induction totally analogous to the induction we just carried out using $\mathcal{I}_0,\mathcal{I}_1, \dots$, 
the question of whether the map $\left|\sr(A) \right| \rightarrow \tilde{\tau}_{\ge m}(\gamma_{\le n-1}R)$ is nulhomotopic, as in Question~\ref{question on nulhomotopies}, reduces to the case of Question~\ref{question on nulhomotopies} for diagrams indexed by directed cubes with terminal vertex removed, which we handle in Lemma \ref{main lemma}.
Let $\mathcal{D}$ be a subcategory of $\mathcal{P}_n^{3}$ isomorphic to a directed cube with terminal vertex removed.
By Lemma \ref{main lemma}, it suffices to show that $[R_{E}^{s}\lim_{d\in\mathcal{D}}](Z^{-t}A(d)$ vanishes for $s-t=0$ and $s=0,1,2$ because $(R^s_E\lim_{d\in\mathcal{D}})\left( Z^{-t}A(d)\right)\cong 0$ for $s>2$. 
The condition
\begin{equation}\label{vanishing condition 3208} [\Sigma^{t} \tilde{\tau}_{\ge i}^{\le n-1}R \wedge \tilde{\tau}_{\ge j}^{\le n-1}R  \wedge \tilde{\tau}_{\ge k}^{\le n-1}R, \tilde{\tau}_{\ge m}^{\le n-1}(\gamma_{\le n-1}R)]\cong 0  \end{equation}
 for $t>-i-j-k+n-1$ is sufficient to show that $(R^s_E\lim_{d\in\mathcal{D}})\left( Z^{-t}A(d)\right)$ vanishes for $s-t=0,1$ and $s=0,1,2$ when $A$ is any of the truncated cubes in the iterative process and hence the maps
\[ \colim_{i\dot{+}j\dot{+}k\ge n-1} \tilde{\tau}_{\ge i}^{\le n-1}R \wedge \tilde{\tau}_{\ge j}^{\le n-1}R  \wedge \tilde{\tau}_{\ge k}^{\le n-1}R \to \tilde{\tau}_{\ge m}^{\le n-1} R\to \tilde{\tau}_{\ge m}^{\le n-1}(\gamma_{\le n-1}R)\]
are nulhomotopic for $0\le m\le n-1$. 
Condition~\ref{vanishing condition 3208} is indeed satisfied, by standard properties of maps from sufficiently connected spectra to sufficiently co-connected spectra.

Finally, we conclude that the map from the diagram \eqref{eg assoc} to $\tilde{\tau}_{\ge m}^{\le n-1} R$ followed by the map $\tilde{\tau}_{\ge m}^{\le n-1} R\to \tilde{\tau}_{\ge m}^{\le n-1}(\gamma_{\le n-1}R)$ is nulhomotopic, again using Example 17.10 in Dugger \cite{dugco}, along with the vanishing of 
\[ Z^{1}(\hocolim_{i\dot{+}j\dot{+}k\ge m+1} \tilde{\tau}_{\ge i}^{\le n-1}R \wedge \tilde{\tau}_{\ge j}^{\le n-1}R  \wedge \tilde{\tau}_{\ge k}^{\le n-1}R ),\] 
\[Z^{0}(\hocolim_{i\dot{+}j\dot{+}k\ge m+1} \tilde{\tau}_{\ge i}^{\le n-1}R \wedge \tilde{\tau}_{\ge j\dot{+}k}^{\le n-1}), \text{ and }\]
\[Z^{0}(\hocolim_{i\dot{+}j\dot{+}k\ge m+1} \tilde{\tau}_{\ge i\dot{+}j}^{\le n-1}R \wedge \tilde{\tau}_{\ge k}^{\le n-1}R).\] We may also consider diagram \eqref{eg assoc} as a diagram in symmetric $\tilde{\tau}_{\ge \bullet}^{\le n-1}R$-bimodules, and what we have shown is that the map of symmetric $\tilde{\tau}_{\ge \bullet}^{\le n-1}R$-bimodules from the homotopy colimit of \eqref{eg assoc} to $\tilde{\tau}_{\ge \bullet}^{\le n-1}R$ followed by the map $\tilde{\tau}_{\ge \bullet}^{\le n-1}R\rightarrow \tilde{\tau}_{\ge \bullet}^{\le n-1} (\gamma_{\le n-1} R)$ is nulhomotopic in the category of $\tilde{\tau}_{\ge \bullet}^{\le n-1}R$-modules. The point is that we can therefore factor this map through a contractible $\tilde{\tau}_{\ge \bullet}^{\le n-1}R$-module, which we denote $C(n)_{\bullet}$. 

The pullback of the diagram 
\begin{equation}\label{key pb}
\xymatrix{ 
& C(n)_{\bullet} \ar[d] \\
\tilde{\tau}_{\ge \bullet }^{\le n-1}(R) \ar[r] & \tilde{\tau}_{\ge \bullet}^{\le n-1}(\gamma_{\le n-1}R) 
}
\end{equation}
in the category of symmetric $\tilde{\tau}_{\ge \bullet }^{\le n-1}(R)$-bimodules is a functor $B_{\bullet}:J_{n-1}^{\op}\to\Sp$ along with structure maps 
\[ B_{\bullet}\otimes_{Day} \tilde{\tau}_{\ge \bullet }^{\le n-1}(R)  \rightarrow B_{\bullet} \text{ and }\]
\[  \tilde{\tau}_{\ge \bullet }^{\le n-1}(R) \otimes_{Day} B_{\bullet} \rightarrow B_{\bullet}.\]
(For the sake of consistency with our notation $\tilde{\tau}_{\ge \bullet }^{\le n-1}(R)$, the subscripted bullet in $B_{\bullet}$ indicates that $B_{\bullet}$ is a finite sequence $B_{n-1}\rightarrow B_{n-2} \rightarrow \dots \rightarrow B_0$.)
We observe that $B_{n-1}\simeq B_i$ for all $0 \le i \le n-1$, and there is a map $B_{n-1}\rightarrow \tilde{\tau}_{\ge 0}^{\le n-1}R$ inducing an isomorphism on $\pi_k$ for $k\ge n$ and $\pi_k(B_{n-1})\cong 0$ for $k<n$.  We therefore define $\tilde{\tau}_{\ge n}^{\le n}R$ to be $B_{n-1}$.

Next we need to check that the structure maps $\rho_{i,j} :\tilde{\tau}_{\ge i}^{\le n }R \Smash \tilde{\tau}_{\ge j}^{\le n }R\to \tilde{\tau}_{\ge n}^{\le n }R$ for $i+j\ge n$ where $\tilde{\tau}_{\ge i}^{\le n }R$ is defined to be $\tilde{\tau}_{\ge i}^{\le n-1}R$ when $0\le i<n$, are unital, associative, and commutative. 
We use the symmetric $\tilde{\tau}_{\ge \bullet}^{\le n-1}R$ bimodule structure to produce associative and commutative structure maps $\rho_{i,n} :\tilde{\tau}_{\ge i}^{\le n-1}R \wedge \tilde{\tau}_{\ge n}^{\le n}R\rightarrow \tilde{\tau}_{\ge n}^{\le n}R$ and $\rho_{n,i}\co\tilde{\tau}_{\ge n}^{\le n-1}R \wedge \tilde{\tau}_{\ge i}^{\le n}R\rightarrow \tilde{\tau}_{\ge n}^{\le n}R$ for $0\le i<n$ as follows: We have the commutative diagram of natural transformations
\[ 
	\xymatrix{
		\tilde{\tau}_{\ge \bullet}^{\le n-1}R \otimes_{Day}  B \ar[r] \ar[d] & B \\
		B \otimes_{Day} \tilde{\tau}_{\ge \bullet}^{\le n-1}R  \ar[ur] &
	 }
\]
so by definition of Day convolution, there is, by evaluating at $n-1$, a commutative diagram 
\[ 
	\xymatrix{
		\colim_{i\dot{+}j\ge n-1} \tilde{\tau}_{\ge i}^{\le n-1}R \Smash B_j \ar[r] \ar[d] & B_{n-1} \\
		\colim_{\ell\dot{+}m\ge n-1} B_{\ell} \Smash \tilde{\tau}_{\ge m}^{\le n-1}R  \ar[ur] &
	 }
\]
in $\Sp$. Since the map of colimits can equivalently be defined as the colimit of factor swap maps $\chi\co\tilde{\tau}_{\ge i}^{\le n-1}R \Smash  B_j\rightarrow B_j\Smash  \tilde{\tau}_{\ge i}^{\le n-1}R$, the commutativity of the diagram above implies the diagram 
\[ 
	\xymatrix{
		\tilde{\tau}_{\ge i}^{\le n-1}R \Smash B_{n-1} \ar[r] \ar[d] & B_{n-1} \\
		B_{n-1} \Smash \tilde{\tau}_{\ge i}^{\le n-1}R  \ar[ur] &
	 }
\]
commutes for all $i$ such that $0\le i<n$. 

We prove that $\rho_{n,i}$ and $\rho_{i,n}$ satisfy associativity with respect to the maps $\rho_{i,j}$ for $i,j<n$ by the same method. We will just describe one example of this, since the remaining examples are proven in the same way. By definition of $B$, there is commutative diagram of natural transformations
\[ 
	\xymatrix{
		\tilde{\tau}_{\ge \bullet}^{\le n-1}R \otimes_{Day} \tilde{\tau}_{\ge \bullet}^{\le n-1}R \otimes_{Day}  B  \ar[r] \ar[d] &\tilde{\tau}_{\ge \bullet}^{\le n-1}R \otimes_{Day}  B \ar[d]  \\
		\tilde{\tau}_{\ge \bullet}^{\le n-1}R \otimes_{Day} B   \ar[r] & B 
	 }
\]
so by definition of Day convolution, there is, by evaluating at $n-1$, a commutative diagram 
\[ 
	\xymatrix{
		\colim_{i\dot{+}j\dot{+}k\ge n-1} \tilde{\tau}_{\ge i}^{\le n-1}R \Smash \tilde{\tau}_{\ge j}^{\le n-1}R \Smash B_k  \ar[r] \ar[d] &\colim_{i\dot{+}j\dot{+}k\ge n} \tilde{\tau}_{\ge i\dot{+}j}^{\le n-1}R \Smash B_k \ar[d] \\
		\colim_{i\dot{+}j\dot{+}k\ge n-1} \tilde{\tau}_{\ge i}^{\le n-1}R \Smash B_{j\dot{+}k}   \ar[r] & B_{n-1}.
	 }
\]
Therefore, in particular, there is a commutative diagram 
\[ 
	\xymatrix{
		 \tilde{\tau}_{\ge i}^{\le n-1}R \Smash \tilde{\tau}_{\ge j}^{\le n-1}R \Smash B_{n-1}  \ar[r] \ar[d] & \tilde{\tau}_{\ge i\dot{+}j}^{\le n-1}R \Smash B_{n-1} \ar[d] \\
		 \tilde{\tau}_{\ge i}^{\le n-1}R \Smash B_{n-1}   \ar[r] & B_{n-1}.
	 }
\]
which proves the desired associativity diagram for $B_{n-1}$. 

To define $\rho_{n,n}:\tilde{\tau}_{\ge n}^{\le n}R \Smash \tilde{\tau}_{\ge n}^{\le n}R \rightarrow \tilde{\tau}_{\ge n}^{\le n}R$, we use the commutative diagram of symmetric $\tilde{\tau}_{\ge \bullet}^{\le n-1}R$-bimodules 
\begin{equation}\label{rho n n}
	\xymatrix{ 
		B\otimes_{Day} B \ar[r] \ar[d]  &  B\otimes \tilde{\tau}_{\ge \bullet}^{\le n-1}R  \ar[d] \\
		\tilde{\tau}_{\ge \bullet}^{\le n-1}R \otimes_{Day} B \ar[r] & B  
	}
\end{equation}
which implies that there is a map
\[ \colim_{i\dot{+}j\ge n-1} B_i\Smash B_j \rightarrow  B_{n-1} \]
by evaluating at $n-1$ and therefore there is a map 
\[ B_{n-1}\Smash B_{n-1}\rightarrow B_{n-1} \]
which we then define to be $\rho_{n,n}$. This map is commutative by definition and it is associative with respect to all the other maps $\rho_{i,j}$ by the same argument used for associativity above because the diagram \ref{rho n n} is a diagram of symmetric $\tilde{\tau}_{\ge \bullet}^{\le n-1}R$-bimodules. 

The maps $\rho_{i,j}\co \tilde{\tau}_{\ge i}^{\le n-1}R \Smash \tilde{\tau}_{\ge j}^{\le n-1}R \rightarrow \tilde{\tau}_{\ge n}^{\le n}R$ for $i+j\ge n$ are compatible with all the maps $\rho_{i,j}$ for $i+j<n$ by definition of $\tilde{\tau}_{\ge n}^{\le n}R$ as the pullback in diagram \eqref{key pb}. (Note that the forgetful functor from symmetric $\tilde{\tau}_{\ge \bullet}^{\le n-1}R$-modules to $\Sp^{J_{n-1}^{\op}}$ composed with the $n$-th evaluation functor to $\Sp$ is a right adjoint and therefore it preserves limits.)

The maps $\rho_{i,n}\co \tilde{\tau}_{\ge i}^{\le n-1}R \Smash \tilde{\tau}_{\ge n}^{\le n-1}R \rightarrow \tilde{\tau}_{\ge n}^{\le n}R$ and $\rho_{n,i}\co \tilde{\tau}_{\ge n}^{\le n-1}R \Smash \tilde{\tau}_{\ge i}^{\le n-1}R \rightarrow \tilde{\tau}_{\ge n}^{\le n}R$ are unital by definition of $B$ as a symmetric $\tilde{\tau}_{\ge \bullet}^{\le n-1}R$ bimodule by the same kind of argument as given above for commutativity and associativity. We will say why the remaining maps 
$\rho_{i,j}\co \tilde{\tau}_{\ge i}^{\le n-1}R \Smash \tilde{\tau}_{\ge j}^{\le n-1}R \rightarrow \tilde{\tau}_{\ge n}^{\le n}R$ for $i+j\ge n$ and $i,j<n$ are unital later. 

We now prove associativity of the maps $\rho_{i,j}:\tilde{\tau}_{\ge i}^{\le n-1}R \Smash \tilde{\tau}_{\ge j}^{\le n-1}R \rightarrow \tilde{\tau}_n^{\le n-1}R$ where $0<i,j<n$ and $i+j\ge n$. The diagrams 
\[ 
\xymatrix{ 
\tilde{\tau}_{\ge i}^{\le n}R \wedge \tilde{\tau}_{\ge j}^{\le n}R \wedge \tilde{\tau}_{\ge k}^{\le n}R \ar[r] \ar[d] & \tilde{\tau}_{\ge i\dot{+}j}^{\le n}R \wedge \tilde{\tau}_{\ge k}^{\le n}R  \ar[d]\\
 \tilde{\tau}_{\ge i}^{\le n}R \wedge \tilde{\tau}_{\ge j\dot{+}k}^{\le n}R \ar[r] & \tilde{\tau}_{\ge n}^{\le n}R }\]
commute for $i,j,k<n$ and $i+j+k\ge n$ by the definition of $\tilde{\tau}_{\ge n}^{\le n}R$ as the pullback \eqref{key pb}. 
The maps $\rho_{i,j}: \tilde{\tau}_{\ge i}^{\le n}R \wedge \tilde{\tau}_{\ge j}^{\le n}R \rightarrow  \tilde{\tau}_{\ge n}^{\le n}R$ are also commutative because the diagrams 
\[ 
	\xymatrix{
		 \tilde{\tau}_{\ge i}^{\le n-1}R \wedge \tilde{\tau}_{\ge j}^{\le n-1}R \ar[dr] \ar[r] & \tilde{\tau}_{\ge j}^{\le n-1}R \wedge \tilde{\tau}_{\ge i}^{\le n-1}R \ar[d] &\\
		  & \tilde{\tau}_{\ge n-1}^{\le n-1}R   \ar[r] &  \tilde{\tau}_{\ge n-1}^{\le n-1} (\gamma_{\le n-1} R) 
		   }
\]
commute for all $i$, $j$, and the diagrams
\[ 
	\xymatrix{
		 \tilde{\tau}_{\ge i}^{\le n-1}R \wedge \tilde{\tau}_{\ge j}^{\le n-1}R \ar[dr] \ar[r] & \tilde{\tau}_{\ge j}^{\le n-1}R \wedge \tilde{\tau}_{\ge i}^{\le n-1}R \ar[d] &\\
		  & C(n)_n   \ar[r] & \tilde{\tau}_{\ge n-1}^{\le n-1} \gamma_{\le n-1} R\\	
		   }
\]
commute for all $i,j$, so by the definition of $B$ as the pullback of the diagram \eqref{key pb}, the diagram
\[ 
	\xymatrix{
		 \tilde{\tau}_{\ge i}^{\le n-1}R \wedge \tilde{\tau}_{\ge j}^{\le n-1}R \ar[dr] \ar[r] & \tilde{\tau}_{\ge j}^{\le n-1}R \wedge \tilde{\tau}_{\ge i}^{\le n-1}R \ar[d] \\
		&  \tilde{\tau}_{\ge n}^{\le n} R 
		}
\]
commutes for $i,j<n$ and $i+j\ge n$. 

The unitality of the maps $\rho_{i,j}$ for $i,j<n$ and $i+j\ge n$ follows by the induction hypothesis and a similar argument to the one above. 
 
We have therefore produced an object in $\Comm \Sp^{J_n^{\op}}$. We then cofibrantly replace this object in $\Comm \Sp^{J_n^{\op}}$ to produce an object $\tau_{\ge \bullet}^{\le n}R$. By induction, we can therefore produce an object in $\Comm \Sp^{\mathcal{N}^{\op}}$ and then cofibrantly replace it to produce (by Remark~\ref{remark on day conv}) a cofibrant decreasingly filtered commutative monoid in $\Sp$, denoted $\tau_{\ge \bullet}R$, as desired. 
\end{proof} 

\begin{remark}\label{gabes remark on cofibrancy of degree 0 quotient}
Note that what we really produce in the proof above is a cofibrant commutative monoid in the category of functors $\Comm \Sp^{\mathcal{N}^{\op}}$, or by the equivalence of categories discussed in Remark \ref{equiv of cats Day}, a lax symmetric monoidal functor in $\Sp^{\mathcal{N}^{\op}}$. Due to the triviality of our $\Sp$-enrichment on $\mathcal{N}$, this is the same data as a lax symmetric monoidal functor in $\Sp^{\mathbb{N}^{\op}}$, and the cofibrancy condition produces a cofibrant decreasingly filtered commutative monoid in $\Sp$ as discussed in Definition \ref{def of dec filt comm mon}. 

We need to check that $\tau_{\ge \bullet}R$ satisfies the ``Cofibrancy of degree 0 quotient'' condition from Definition \ref{def of dec filt comm mon}. In order to see that the map $S\to \tau_{\ge 0}R/\tau_{\ge 1}R$ is a level-wise cofibration we observe, first of all, that the construction of the map $R\to H\pi_0R$ is a cofibration because it is formed by attaching $E_{\infty}$-cells to kill higher homotopy. This is proven in \cite[IV.3.1]{MR1417719} in the associative setting, but the same proof works in the commutative setting. We then observe that $\tau_{\ge 0}R$ is constructed so that the composite $S\to \tau_{\ge 0}R\to R\to H\pi_0R$ is a cofibration of commutative monoids in $\Sp$, hence also a level-wise cofibration in $\Sp$. 
The map $\tau_{\ge 0}R\to H\pi_0R$ factors through the projection $\tau_{\ge 0}R\to \tau_{\ge 0}R/\tau_{\ge 1}R$, so since level-wise cofibrations are retractile, the map $S\rightarrow \tau_{\ge 0}R/\tau_{\ge 1}R$ is also a level-wise cofibration. This proves the ``Cofibrancy of degree 0 quotient'' condition, so $\tau_{\ge \bullet}R$ is a decreasingly filtered commutative monoid in symmetric spectra.
\end{remark}

\begin{example} ~\label{exm J} Assume a prime $p>2$ is fixed. Let $j$ be a cofibrant replacement in $\Comm\Sp$, for the commutative ring spectrum $\hat{K}(\mathbb{F}_q)_p$, where $q$ is a topological generator of $\mathbb{Z}_p^{\times}$. Then by Theorem~\ref{post filt}, we produce a cofibrant decreasingly filtered commutative monoid in $\Sp$, denoted $j_{\ge \bullet}$. The associated graded commutative monoid $E_0^*j_{\ge \bullet}$ is 
\[ H\pi_0j\vee\Sigma^{2p-3}H\pi_{2p-3}j\vee \Sigma^{4p-5}H\pi_{4p-5}j\vee ... \] 
after forgetting the commutative monoid structure. We therefore denote the associated graded commutative monoid object $H\pi_*j$. There is an isomorphism of graded rings $\pi_*(H\pi_*j)\cong \pi_*(j)$, but as we have seen $H\pi_*j$ is a generalized Eilenberg-Mac Lane spectrum so by taking the associated graded of this filtration of $j$ we have effectively killed off all the $k$-invariants of $j$. Note that by $H\pi_n j$ we mean an explicit model for the Eilenberg-Mac Lane spectrum constructed as the cofiber of a cofibration $j_{\ge n+1}\rightarrow j_{\ge n}$. This example is used to compute $V(1)_*THH(j)$ in a paper by the first author \cite{161200548}. 
\end{example}
\begin{example} \label{exm 2}
Let $R$ be a commutative ring spectrum with homotopy groups $\pi_k(R)\cong \hat{\mathbb{Z}}_p$ for $k=0,n$ and $\pi_k(R)\cong 0$ otherwise. Then  one can build 
\[ 0 \longrightarrow \Sigma^n H\hat{\mathbb{Z}}_p \longrightarrow R  \]
as a cofibrant decreasingly filtered commutative ring spectrum using Theorem~\ref{post filt}, were $\Sigma^n H\hat{\mathbb{Z}}_p$ is again an explicit model for the Eilenberg-Mac Lane spectrum constructed in Theorem~\ref{post filt}. Since one can construct a Postnikov truncation of a commutative ring spectrum as a commutative ring spectrum \cite{MR1732625}, we can produce an example of this type by considering the truncation of the connective $p$-complete complex K-theory spectrum 
\[ \Sigma^2H\pi_2ku_p \longrightarrow ku_p^{\le 2} \longrightarrow H\pi_0ku_p\] 
where the map $ku_p^{\le 2} \longrightarrow H\pi_0ku_p$ is constructed as in \cite[Thm. 8.1]{MR1732625}. 
\end{example} 

\section{Applications.}
Let $S/p$ be the mod $p$ Moore spectrum and let $R$ be a connective commutative ring spectrum. We now present two calculations: first, we calculate $(S/p)_*THH(R)$ when $R$ has the property that $\pi_*(R)\cong \hat{\mathbb{Z}}_p[x]/x^2$ where $|x|>0$; second, we provide a bound on topological Hochschild homology of $R$ in terms of $THH(H\pi_*(R))$ and we give an explicit bound in the case $\pi_*(R)\cong \mathbb{Z}_{(p)}[x]$ where $|x|=2n$ for $n>0$. 
\subsection{Topological Hochschild homology of Postnikov truncations.} 
Let $R$ be a commutative ring spectrum with the property that $\pi_*(R)\cong \hat{\mathbb{Z}}_p[x]/x^2$ with $|x|>0$. We will consider the THH-May spectral sequence 
\[ (S/p)_*(THH(H\hat{\mathbb{Z}}_p \ltimes \Sigma^{n} H\hat{\mathbb{Z}}_p)) \Rightarrow (S/p)_*(THH(R)) \] 
produced using the short filtration of a commutative ring spectrum $R$ given in Example \ref{exm 2}. First, we compute the input of the $S/p$-THH-May spectral sequence for this example. 
\begin{prop} Let $p$ be an odd prime, then 
\[ (S/p)_*(THH(H\hat{\mathbb{Z}}_p\ltimes \Sigma^{n} H\hat{\mathbb{Z}}_p)) \cong E(\lambda_1)\otimes_{\mathbb{F}_p} P(\mu_1)\otimes_{\mathbb{F}_p} HH_*(E(x)) \]
where $|x|=n$. The grading of $HH_*(E(x))$ is given by the sum of the internal and homological gradings. 
\end{prop}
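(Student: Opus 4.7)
The plan is to compute $(S/p)_*THH(R)$ for $R = H\hat{\mathbb{Z}}_p \ltimes \Sigma^n H\hat{\mathbb{Z}}_p$ by splitting the calculation into an $H\hat{\mathbb{Z}}_p$-relative piece and an absolute piece, then reassembling via a base-change identification and invoking B\"okstedt's theorem $(S/p)_*THH(H\hat{\mathbb{Z}}_p) \cong E(\lambda_1) \otimes_{\mathbb{F}_p} P(\mu_1)$. Throughout, set $A = H\hat{\mathbb{Z}}_p$.

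First I would analyze the cyclic bar construction model for the relative $THH^A(R)$. Since $R \simeq A \vee \Sigma^n A$ as $A$-modules, the $k$-simplex object $R^{\wedge_A (k+1)}$ splits as a wedge $\bigvee_{T \subseteq \{0,\dots,k\}} \Sigma^{n|T|}A$. Because $(S/p)_*A \cong \mathbb{F}_p$ is concentrated in degree $0$, applying $(S/p)_*(-)$ levelwise reproduces exactly the algebraic Hochschild complex of $E(x) := (S/p)_*R \cong \mathbb{F}_p[x]/x^2$ with $|x|=n$. The simplicial filtration spectral sequence converges and collapses (we are computing the homotopy groups of a simplicial $H\mathbb{F}_p$-module), yielding $(S/p)_*THH^A(R) \cong HH_*(E(x))$.

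Next I would pass from relative to absolute $THH$ using the base-change equivalence $THH(R) \simeq THH(A) \wedge_A^L THH^A(R)$, valid for any commutative $A$-algebra $R$. This follows from viewing $THH(-)$ as tensoring with $S^1$ in $\Comm(\mathcal{C})$ together with the fact that such tensors preserve pushouts, applied to a pushout description of $R$ in $\Comm_A$ (for example, the presentation $R \simeq A \wedge^L_{\mathrm{Sym}_A(\Sigma^{n+1}A)} A$ coming from its classifying square-zero derivation). Applying $(S/p)_*(-)$ together with the K\"unneth spectral sequence for $\wedge_A^L$, which degenerates since $(S/p)_*A \cong \mathbb{F}_p$ is a field, yields
\[ (S/p)_*THH(R) \cong (S/p)_*THH(A) \otimes_{\mathbb{F}_p} (S/p)_*THH^A(R). \]
Substituting B\"okstedt's theorem and the relative computation gives the claimed formula.

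The main obstacle will be establishing the base-change equivalence $THH(R) \simeq THH(A) \wedge_A^L THH^A(R)$ cleanly within the paper's model-categorical framework. An alternative route that avoids this abstract base change is to run the B\"okstedt spectral sequence $HH_*^{\mathbb{F}_p}((H\mathbb{F}_p)_*R) \Rightarrow (H\mathbb{F}_p)_*THH(R)$ directly; the splitting $(H\mathbb{F}_p)_*R \cong (H\mathbb{F}_p)_*A \otimes_{\mathbb{F}_p} E(x)$ as algebras is preserved by the B\"okstedt differentials (which act trivially on the $E(x)$-factor), so the spectral sequence factors as a product with $HH_*(E(x))$, and one then passes from $(H\mathbb{F}_p)_*$ to $(S/p)_*$ via a universal coefficient argument to recover the stated formula.
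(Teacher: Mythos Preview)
Your argument is correct, but the paper takes a somewhat different (and slightly shorter) route. Rather than working with relative $THH$ over $A=H\hat{\mathbb{Z}}_p$ and invoking the base-change equivalence $THH(R)\simeq THH(A)\wedge_A^L THH^A(R)$, the paper observes that the trivial square-zero extension splits as a smash product of commutative ring spectra,
\[
H\hat{\mathbb{Z}}_p \ltimes \Sigma^n H\hat{\mathbb{Z}}_p \;\simeq\; H\hat{\mathbb{Z}}_p \smash (S\ltimes \Sigma^n S),
\]
and then applies the Schw\"anzl--Vogt--Waldhausen splitting $THH(A\smash B)\simeq THH(A)\smash THH(B)$. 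This reduces the problem to computing $(H\mathbb{F}_p)_*THH(S\ltimes\Sigma^n S)$, which the paper does via the B\"okstedt spectral sequence with input $HH_*(E(x))$; the spectral sequence collapses for degree reasons. Tensoring with B\"okstedt's computation of $(S/p)_*THH(H\hat{\mathbb{Z}}_p)$ then gives the result, using that $S/p\smash THH(H\hat{\mathbb{Z}}_p)$ is already an $H\mathbb{F}_p$-module.

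The two approaches reach the same tensor decomposition by different structural inputs: the paper trades your base-change equivalence (which you rightly flag as the main thing to justify) for the more elementary smash-product splitting of $THH$, at the cost of introducing the auxiliary ring spectrum $S\ltimes\Sigma^n S$. Your approach is more intrinsic to $R$ as an $A$-algebra and would generalize better to non-trivial square-zero extensions over $A$, whereas the paper's route is specific to the \emph{trivial} split extension, where the smash-product description is available. Your alternative via the B\"okstedt spectral sequence for $R$ itself is also viable but heavier than the paper's version, since the paper only runs that spectral sequence for the small factor $S\ltimes\Sigma^n S$.
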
 
\begin{proof} 
Due to B\"okstedt ~\cite{bok}, there is an isomorphism
\[ \pi_*(S/p\Smash THH(H\hat{\mathbb{Z}}_p))\cong E(\lambda_1)\otimes_{\mathbb{F}_p} P(\mu_1). \]  
Let $S\ltimes \Sigma^{n}S$ be the trivial split square-zero extension of $S$ by $\Sigma^nS$. Then $H\mathbb{Z}$ and $S\ltimes \Sigma^{n}S$ are commutative $S$-algebras and there is an equivalence of commutative $S$-algebras $H\hat{\mathbb{Z}}_p \ltimes \Sigma^{n} H\hat{\mathbb{Z}}_p\simeq H\hat{\mathbb{Z}}_p \Smash \left( S\ltimes \Sigma^{n}S\right)$. Since the functor $S^1_{\bullet}\otimes(-)$ commutes with coproducts in $\Comm\Sp$ by \cite{MR1473888}, there are equivalences
\[ \begin{array}{rcl} THH(H\hat{\mathbb{Z}}_p\ltimes \Sigma^{n} H\hat{\mathbb{Z}}_p)) &\simeq &   THH(H\hat{\mathbb{Z}}_p \Smash(S\ltimes \Sigma^{n}S)) \\
& \simeq &   THH(H\hat{\mathbb{Z}}_p )\Smash THH(S\ltimes \Sigma^n S )  \\ \end{array}  \] 
of commutative ring spectra. 
Since $S/p\Smash H\hat{\mathbb{Z}}_p\simeq H\mathbb{F}_p $
 and the spectrum $THH(H\hat{\mathbb{Z}}_p )$ is a $H\hat{\mathbb{Z}}_p$-algebra, the spectrum $S/p\Smash THH(H\hat{\mathbb{Z}}_p)$ naturally has the structure of a $H\mathbb{F}_p$-module. Hence, there are isomorphisms
\[ \begin{array}{rc} \pi_*(S/p\Smash THH(H\hat{\mathbb{Z}}_p )\Smash THH(S\ltimes \Sigma^nS )) &\cong  \\
\pi_*\left(\left(S/p\Smash THH(H\hat{\mathbb{Z}}_p)\right)\Smash_{H\mathbb{F}_p} \left(H\mathbb{F}_p\Smash THH(S\ltimes \Sigma^nS )\right)\right)  &\cong \\ 
 \pi_*(S/p\Smash THH(H\hat{\mathbb{Z}}_p ))\otimes_{\mathbb{F}_p} {H\mathbb{F}_p}_*(THH(S\ltimes \Sigma^{n}S)). &\\ \end{array} \]  
Now, we apply the B\"okstedt spectral sequence
\[ HH_*( {H\mathbb{F}_p}_*(S\ltimes \Sigma^n S) ) \Rightarrow H_*(THH(S\ltimes \Sigma^nS ); \mathbb{F}_p) \]
whose input is isomorphic to $HH_*(E(x)).$ If $|x|$ is odd, then $HH_*(E(x))\cong E(x)\otimes_{\mathbb{F}_p} \Gamma(\sigma x)$, which follows from Theorem 6.1 in Chapter 10 of \cite{MR1731415} and the standard fact that $\Tor_*^{E(x)}(\mathbb{F}_p,\mathbb{F}_p)\cong \Gamma(\sigma x)$ (see \cite{MR1209233} for details). If $|x|$ is even, then one easily computes
\[ HH_n(E(x)) \cong \left \{ \begin{array}{ll} E(x)  & *=0  \\  \Sigma^{|x|(2i-1)} \mathbb{F}_p\{1\} & n=2i-1 \\ \Sigma^{|x|(2i+1)} \mathbb{F}_p\{x\}&  n=2i \end{array} \right.\]  
for $i\ge 1$. There is an isomorphism of bigraded rings
\[ HH_{*,*}(E(x))\cong E(x)[x_i,y_j\co i\ge 1, j\ge 0 ]/\sim\]
where the degrees are given by $|x_i|=(2i,2|x|i+|x|)$ and $|y_j|=(2j+1,2j|x|+|x|)$, and the equivalence relation is the one that makes all products zero. (See \cite[Prop. 3.3]{MR2183525} for the more general calculation of $HH_*(\mathbb{F}_p[x]/x^h)$ when $|x|=2n$, $n>0$ and $(p,h)=1$.)
The representatives in the cyclic bar complex for $x_i$ and $y_j$ are $x^{\otimes 2i+1}$ and $1\otimes x^{\otimes 2j+1}$ respectively. Whether $|x|$ is even or odd, the B\"okstedt spectral sequence collapses, with no hidden multiplicative extensions, for bidegree reasons. \end{proof} 
\begin{corollary} (Rigidity of $THH$ mod $p$ for Postnikov truncations ) \label{rigid square-zero}
Let $R$ be a connective $E_{\infty}$-ring spectrum with $\pi_*(R)\cong \hat{\mathbb{Z}}_p[x]/x^2$, $\pi_i(R)\cong 0$ for $i\neq0,k$. Suppose that 
\[ p \not \equiv k+1 \text{ mod } 2k+1. \]
Then the graded abelian group $\pi_*(S/p\wedge THH(R))$ depends only on $\pi_*(R^{\le 2k})$; i.e only on $p$ and $k$. 
\end{corollary}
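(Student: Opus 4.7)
The plan is to apply Theorem~\ref{subprop on thh-may ss} to the two-step cofibrant decreasingly filtered commutative monoid structure on $R$ provided by Example~\ref{exm 2}, namely
\[
0 \longrightarrow \Sigma^{k} H\hat{\mathbb{Z}}_p \longrightarrow R,
\]
whose associated graded commutative monoid is the trivial square-zero extension $H\hat{\mathbb{Z}}_p \ltimes \Sigma^{k} H\hat{\mathbb{Z}}_p$. Taking $X_\bullet$ to be the minimal simplicial model for $S^1$ and $H_* = (S/p)_*$, the preceding proposition identifies the $E^1$-page as
\[
E^1_{*,*} \cong E(\lambda_1)\otimes_{\mathbb{F}_p} P(\mu_1)\otimes_{\mathbb{F}_p} HH_*(E(x)),\qquad |x|=k,
\]
which depends only on $p$ and $k$. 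The connectivity hypothesis of Theorem~\ref{subprop on thh-may ss} is satisfied (the $\Sigma^k H\hat{\mathbb{Z}}_p$ in filtration $1$ is $k$-connective), so the spectral sequence converges strongly to $(S/p)_*THH(R)$. The corollary therefore reduces to showing that this spectral sequence collapses at $E^1$.

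First I would observe that $\lambda_1$ and $\mu_1$ are permanent cycles: they lie in the image of the map of spectral sequences induced by the unit $H\hat{\mathbb{Z}}_p\to R$, which lives entirely in May filtration $0$, so there is no room in the target bidegree of any $d^r$ to be nonzero. By the Leibniz rule, which holds by Theorem~\ref{subprop on thh-may ss}, it suffices to rule out $d^r(z)$ for $z$ in the $\mathbb{F}_p$-basis $\{1,x,y_0,x_i\,(i\ge 1),y_j\,(j\ge 1)\}$ of $HH_*(E(x))$. Since all products among $\{x,x_i,y_j\}$ are zero, the May filtrations of the nonzero basis classes lie in $\{0,1,3,5,\dots\}$, so classes in May filtration $2,4,6,\ldots$ on the $E^1$-page are zero; consequently $d^r$ can only be nonzero when $r$ is even.

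The key step will be a bidegree analysis of $d^r(z)$ for each basis element $z$. Writing bidegrees as (topological degree, May filtration), we have $|x|=(k,1)$, $|y_0|=(k+1,1)$, $|x_i|=((2i+1)k+2i,\,2i+1)$, $|y_j|=((2j+1)(k+1),\,2j+1)$, so $d^r(z)$ lies in $(|z|_{\mathrm{top}}-1,\,|z|_{\mathrm{filt}}+r)$. For a candidate target $\lambda_1^a\mu_1^b z'$ of matching filtration, $z'$ must be an $x_{i'}$ or $y_{j'}$ with $i',j'$ strictly larger than the index of $z$, which forces the topological degree of the target to strictly exceed $|z|_{\mathrm{top}}-1$. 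The only coincidences of topological degrees that could possibly allow the equation $|d^r(z)|_{\mathrm{top}}=|\lambda_1^a\mu_1^b z'|_{\mathrm{top}}$ reduce to a congruence of the form $p\equiv k+1\pmod{2k+1}$, arising from $|x_{i'}|-|y_j|=(2i'-2j)k+(2i'-2j-1)+1$ being a combination of $2p$ and $2p-1$; the hypothesis $p\not\equiv k+1\pmod{2k+1}$ rules out exactly these resonances, leaving no admissible target and forcing $d^r(z)=0$ for every generator $z$.

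With all differentials shown to vanish, the spectral sequence collapses at $E^1$, and strong convergence gives an additive isomorphism of $(S/p)_*THH(R)$ with $E(\lambda_1)\otimes P(\mu_1)\otimes HH_*(E(x))$, which manifestly depends only on $p$ and $k$, proving the rigidity statement. The main obstacle is the combinatorial bookkeeping needed to show that no pair of a source and a permissible target bidegree can coexist outside the excluded congruence class; organizing this case analysis uniformly across the index $r$ and cleanly extracting the congruence $p\equiv k+1\pmod{2k+1}$ as the single numerical obstruction will be the most delicate part of the write-up.
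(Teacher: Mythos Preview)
Your approach is essentially the paper's: both use the THH--May spectral sequence attached to the two-step filtration of Example~\ref{exm 2} and argue that it collapses for bidegree reasons under the stated congruence hypothesis; the paper's proof is the single sentence asserting exactly this, while you spell out the mechanism.

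One correction worth making: you argue that $\lambda_1$ and $\mu_1$ are permanent cycles because they lie in the image of a map of spectral sequences induced by a unit $H\hat{\mathbb{Z}}_p\to R$, but no such ring map need exist in general---the natural map is the Postnikov truncation $R\to H\hat{\mathbb{Z}}_p$, which goes the wrong way for your purpose. This is not fatal. Your bidegree analysis should simply treat $\lambda_1$ and $\mu_1$ on the same footing as the Hochschild classes: a potential $d^r$ out of May filtration $0$ lands in filtration $r\ge 1$, and the same comparison of topological degrees (the target $z'$ has strictly larger topological degree than allowed once you subtract the contribution of $\lambda_1^{a'}\mu_1^{b'}$) rules it out under the congruence hypothesis. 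That uniform bidegree check is precisely what the paper's one-line proof has in mind, so once you fold $\lambda_1,\mu_1$ into that analysis rather than appealing to a nonexistent map, your argument matches the paper.
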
 

\begin{proof} The THH-May spectral sequence 
\[ (S/p)_{*,*}(THH(H\hat{\mathbb{Z}}_p\ltimes \Sigma^{2k} H\hat{\mathbb{Z}}_p)) \Rightarrow (S/p)_*(THH(R)) \]
collapses since there are no possible differentials for bidegree reasons under the assumptions on $k$ with respect to $p$. 
\end{proof} 
 \begin{remark} 
 Corollary~\ref{rigid square-zero} can be considered a rigidity theorem in the sense that $S/p\wedge THH$ does not see the first Postnikov $k$-invariant in the cases given by the congruences above. 
  \end{remark} 
\begin{corollary} Let $p$ be a prime such that $p\not \equiv 2 \text{ mod }3$, then 
\begin{equation}\label{iso 230845} \pi_*(S/p\Smash THH(ku_p^{\le 2}))\cong E(\lambda_1)\otimes_{\mathbb{F}_p} P(\mu_1)\otimes_{\mathbb{F}_p} HH_*(E(x)) \end{equation}
up to multiplicative extensions in the THH-May $E^{\infty}$-term (so, in particular, \eqref{iso 230845} is an isomorphism of graded abelian groups), where $|x|=2$ and the degree of $HH_*(E(x))$ in $\pi_*$ is given by the sum of the internal and homological degree.  
\end{corollary}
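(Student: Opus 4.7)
The plan is to apply the topological Hochschild-May spectral sequence of Theorem~\ref{subprop on thh-may ss} to the two-step cofibrant decreasingly filtered commutative monoid
\[ 0 \longrightarrow \Sigma^{2}H\hat{\mathbb{Z}}_p \longrightarrow ku_p^{\le 2} \]
constructed in Example~\ref{exm 2} (the case $n=2$). This filtration is Hausdorff and finite, and $H\hat{\mathbb{Z}}_p \smash (-)$ is a connective generalized homology theory in the sense of Definition~\ref{def of gen hom thy} satisfying the connectivity axiom $H_m(I_n) \cong 0$ for $m<n$, so Theorem~\ref{subprop on thh-may ss} supplies a strongly convergent spectral sequence
\[ E^{1}_{s,t} \cong (S/p)_{s,t}\bigl(THH(E_{0}^{*}I_{\bullet})\bigr) \Longrightarrow (S/p)_{s}\bigl(THH(ku_p^{\le 2})\bigr), \quad d^{r}\colon E^{r}_{s,t}\rightarrow E^{r}_{s-1,t+r}, \]
with $E_{0}^{*}I_{\bullet} = H\hat{\mathbb{Z}}_{p}\ltimes \Sigma^{2}H\hat{\mathbb{Z}}_{p}$ by Theorem~\ref{thm on fund theorem}. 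The previous proposition identifies the $E^{1}$-term as
\[ E^{1}_{*,*} \cong E(\lambda_{1})\otimes_{\mathbb{F}_{p}} P(\mu_{1}) \otimes_{\mathbb{F}_{p}} HH_{*}(E(x)), \qquad |x|=2. \]

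Next, I would record the bidegrees $(s,t)$ (total homotopy degree, May filtration). The classes $\lambda_{1}$ and $\mu_{1}$ come from the filtration-$0$ summand $H\hat{\mathbb{Z}}_{p}$ of $E_{0}^{*}I_{\bullet}$, so $|\lambda_{1}| = (2p-1,0)$ and $|\mu_{1}| = (2p,0)$. The cyclic bar representatives from the previous proposition give $|x|=(2,1)$, $|y_{j}|=(6j+3,\, 2j+1)$ and $|x_{i}|=(6i+2,\, 2i+1)$, with May filtration of a tensor product equal to the number of $\Sigma^{2}H\hat{\mathbb{Z}}_{p}$-factors appearing in the representative. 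The differentials $d^{r}$ satisfy the Leibniz rule (Theorem~\ref{subprop on thh-may ss}), so it suffices to show $d^{r}$ vanishes on the algebra generators $\lambda_{1},\mu_{1},x,y_{j},x_{i}$.

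The classes $\lambda_{1}$ and $\mu_{1}$ survive to permanent cycles because they lift along the unit $H\hat{\mathbb{Z}}_{p}\to ku_{p}^{\le 2}$, using naturality of the THH-May spectral sequence together with the fact that the spectral sequence for the trivial one-step filtration on $H\hat{\mathbb{Z}}_{p}$ collapses and recovers B\"okstedt's computation. The class $x$ also lies in a low degree bidegree for which the target $(1,\,1+r)$ of $d^{r}$ is empty, since the smallest total degree of a nontrivial monomial in $E^{1}$ is $2$. The real work is to check that $d^{r}(y_{j})=0$ and $d^{r}(x_{i})=0$ for all $r\ge 1$. Each target bidegree $(6j+2,\, 2j+1+r)$ or $(6i+1,\,2i+1+r)$ is a monomial in $\mu_{1}^{a}\lambda_{1}^{\epsilon}$ times an $HH_{*}(E(x))$-generator; writing out the diophantine condition on $(a,\epsilon,p)$ coming from matching total and filtration degrees, the only solutions occur when $p=3(j-k)+2$ for some nonnegative $k$ depending on the target generator, i.e., when $p\equiv 2\pmod{3}$. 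The hypothesis $p\not\equiv 2\pmod{3}$ therefore rules out every potential differential, so the spectral sequence collapses at $E^{1}$. Since the $E^{1}$-term is a free graded $\mathbb{F}_{p}$-module, there are no extension problems, and the stated isomorphism of graded $\mathbb{F}_{p}$-vector spaces follows.

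The main obstacle, and the step which will occupy most of the technical detail, is the exhaustive bidegree bookkeeping in the previous paragraph: listing every monomial in $E^{1}$ whose bidegree matches a possible target $d^{r}(y_{j})$ or $d^{r}(x_{i})$, and checking that the associated linear constraint on the exponents of $\mu_{1}$ and $\lambda_{1}$ forces $p\equiv 2\pmod{3}$. This is routine modular arithmetic but requires care to cover the cases coming from each family of $HH_{*}(E(x))$-generators (namely $1$, $x$, $y_{k}$, $x_{k}$, $xy_{k}$, $xx_{k}$) and all combinations $(a,\epsilon)\in \mathbb{N}\times\{0,1\}$.
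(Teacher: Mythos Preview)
Your overall approach---apply the THH-May spectral sequence to the two-step filtration of Example~\ref{exm 2} and show it collapses for bidegree reasons---is exactly the paper's approach; the paper simply invokes Corollary~\ref{rigid square-zero} (with $|x|=2$, so the congruence condition $p\not\equiv k+1\pmod{2k+1}$ becomes $p\not\equiv 2\pmod 3$).

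However, your implementation of the bidegree argument has a genuine gap, and in fact the analysis is backwards. The naturality argument for $\lambda_1,\mu_1$ being permanent cycles does not work: there is in general no $E_\infty$-ring map $H\hat{\mathbb{Z}}_p\to ku_p^{\le 2}$ (the first Postnikov $k$-invariant of $ku_p$ is nonzero), so you cannot lift these classes from a collapsed spectral sequence. The map that does exist, $ku_p^{\le 2}\to H\hat{\mathbb{Z}}_p$, goes the wrong way: it shows $\lambda_1,\mu_1$ cannot be \emph{hit} by differentials, but does not prevent them from \emph{supporting} differentials into classes (like $x_k,y_k$) that die under this map.

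Conversely, the differentials $d^r(y_j)$ and $d^r(x_i)$ that you flag as the ``main obstacle'' actually vanish for all $p$, with no hypothesis needed: any target must have May filtration at least $2j+2$ (resp.\ $2i+2$), which forces its $HH_*(E(x))$-factor to be some $y_k$ or $x_k$ with $k\ge j+1$ (resp.\ $k\ge i+1$), and then the total degree is already too large regardless of the $\lambda_1^\epsilon\mu_1^a$ factor. The hypothesis $p\not\equiv 2\pmod 3$ enters precisely where you dismissed it: the only possible targets for $d^r(\lambda_1)$ and $d^r(\mu_1)$ are $x_k$ and $y_k$ (respectively) with $3k=p-2$, and these exist exactly when $p\equiv 2\pmod 3$. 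So the bookkeeping you outline is the right kind of argument, but applied to the wrong generators.
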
 
\subsection{Upper bounds on the size of $THH$.}

Many explicit computations are possible using the $THH$-May spectral sequence; for example, the first author's computations of topological Hochschild homology of the algebraic $K$-theory of finite fields, in~\cite{161200548}. 
These computations are sufficiently lengthy that they merit their own separate paper. 

In the present paper, in lieu of explicit computations using the $THH$-May spectral sequence, we point out that the mere existence of the $THH$-May spectral sequence implies an upper bound on the size of the topological Hochschild homology groups of a ring spectrum: namely, if $R$ is a graded-commutative ring and $X_{\bullet}$ is a simplicial finite set and $E_*$ is a generalized homology theory, then for any $E_{\infty}$-ring spectrum $A$ such that $\pi_*(A) \cong R$, $E_*(X_{\bullet}\otimes A)$ is a subquotient of $E_*(X_{\bullet} \otimes HR)$. Here $HR$ is the generalized Eilenberg-Mac Lane spectrum of the graded ring $R$.

In particular:
\begin{theorem} \label{upper bound thm}
For all integers $n$ and all connective $E_{\infty}$-ring spectra $A$,
the cardinality of $THH_n(A)$ is always less than or equal to the cardinality of $THH_n(H\pi_*(A))$. 
\end{theorem}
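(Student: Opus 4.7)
The plan is to apply the $THH$-May spectral sequence of Theorem~\ref{subprop on thh-may ss} to the multiplicative Whitehead tower $\tau_{\geq \bullet}A$ constructed in Theorem~\ref{post filt}, with $X_{\bullet}$ the minimal simplicial model for $S^1$ and $H_{*}$ taken to be stable homotopy $\pi_{*}$, and then to extract the cardinality bound from strong convergence.

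The first step is to verify the hypotheses of Theorem~\ref{subprop on thh-may ss} in this situation. Theorem~\ref{post filt} guarantees that $\tau_{\geq \bullet}A$ is a cofibrant decreasingly filtered commutative monoid satisfying $\pi_m(\tau_{\geq n}A)\cong 0$ whenever $m<n$, which is exactly the required connectivity axiom. The same connectivity forces $\holim_n \tau_{\geq n}A$ to have vanishing homotopy groups, via the Milnor $\lim^1$ sequence applied to each $\pi_k$-tower (which is eventually zero), so the tower is Hausdorff. Finally, $\pi_{*}(-)\cong [\Sigma^{*}S,(-)\smash S]$ is a connective generalized homology theory in the sense of Definition~\ref{def of gen hom thy} on symmetric spectra, with ``connectivity of smash products'' being the standard K\"{u}nneth-type connectivity estimate.

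The second step invokes the identification, recalled in the introduction, that the associated graded of the Whitehead tower is the generalized Eilenberg--Mac~Lane ring spectrum $E_0^{*}\tau_{\geq \bullet}A \simeq H\pi_{*}(A)$. Theorem~\ref{subprop on thh-may ss} then produces a strongly convergent spectral sequence
\[ E^1_{s,t} \cong THH_{s,t}(H\pi_{*}(A)) \Longrightarrow THH_{s}(A), \]
in which $\bigoplus_{t} E^1_{n,t} \cong THH_n(H\pi_{*}(A))$ in each fixed total degree $n$.

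The third and final step is purely formal: strong convergence places a complete Hausdorff filtration on $THH_n(A)$ whose associated graded pieces are the $E^{\infty}_{n,t}$, each of which is a subquotient of the corresponding $E^1_{n,t}$. Hence
\[ |THH_n(A)| \;\leq\; \textstyle\prod_{t} |E^{\infty}_{n,t}| \;\leq\; \prod_{t} |E^1_{n,t}| \;=\; |THH_n(H\pi_{*}(A))|, \]
which is the desired bound (trivially true when the right-hand side is infinite). I do not anticipate any serious obstacle here, since the substantial technical work --- rigidifying the Whitehead tower as a cofibrant decreasingly filtered commutative monoid, identifying its associated graded with $H\pi_{*}(A)$, and establishing strong convergence of the May-type spectral sequence --- has already been carried out earlier in the paper, so the upper bound follows as an essentially immediate corollary.
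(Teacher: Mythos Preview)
Your proposal is correct and follows exactly the approach the paper intends: the theorem is stated as an immediate consequence of the strongly convergent $THH$-May spectral sequence applied to the Whitehead filtration of Theorem~\ref{post filt}, and you have simply spelled out the details (verifying the hypotheses of Theorem~\ref{subprop on thh-may ss}, identifying the associated graded with $H\pi_*(A)$, and reading off the cardinality bound from the finite filtration on the abutment). The only point worth making explicit is that the upper vanishing curve $E^1_{s,t}=0$ for $s<t$ ensures that for each fixed $n$ only finitely many filtration quotients are nonzero, so the product $\prod_t |E^{\infty}_{n,t}|$ is genuinely a finite product and your cardinality chain goes through without subtlety.
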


Below are more details in a more restricted class of examples, namely, the $E_{\infty}$ ring spectra $A$ such that $\pi_*(A) \cong \mathbb{Z}_{(p)}[x]$.

\begin{definition}\label{partial order on power series}
We put a partial ordering on power series with integer coefficients as follows:
given $f,g\in \mathbb{Z}[[t]]$, we write $f\leq g$ if and only if, for all nonnegative integers $n$, the coefficient of $t^n$ in $f$ is less than or equal to the coefficient of $t^n$ in $g$.
\end{definition}

\begin{definition}\label{def of finite-type}
Let $A$ be a graded ring. 
We will say that a graded $A$-module $M$ is {\em finite-type and free} if there exists a function $c: \mathbb{Z} \rightarrow \mathbb{N}$
and an isomorphism of graded $A$-modules 
 \[ \coprod_{n\in \mathbb{Z}} (\Sigma^n A)^{\oplus c(n)} \stackrel{\cong}{\longrightarrow} M.\]
We will say that $M$ is {\em finite-type} 
if there exists an exact sequence of graded $A$-modules of the form
$F_1 \rightarrow F_0 \rightarrow M \rightarrow 0$
with $F_0,F_1$ both finite-type and free.
\end{definition}

\begin{lemma}\label{finite type closure lemma}
Let $A$ be a Noetherian connective commutative graded ring. Then the collection of bounded-below finite-type graded $A$-modules is closed under taking kernels, cokernels, extensions, 
and tensor products over $A$. Consequently, every bounded-below finite-type graded $A$-module admits a resolution by bounded-below finite-type free graded $A$-modules, and if $M,N$ are bounded-below finite-type graded $A$-modules, then so is $\Tor^A_{n,*}(M,N)$ for each nonnegative integer $n$, and furthermore, $\Tor^A_{s,t}(M,N)$ vanishes for all $s>t$.
\end{lemma}
\begin{proof}
This is a bit of elementary algebra and we leave the proof as an exercise.
\end{proof}

Lemma~\ref{finite generation of THH lemma} is surely not a new result:
\begin{lemma}\label{finite generation of THH lemma}
Suppose that $A$ is a connective $E_{\infty}$-ring spectrum such that the ring $\pi_*(A)$ is commutative and Noetherian, and suppose that the graded $\pi_*(A)$-module $\pi_*(A\Smash A)$ is finite-type. 
Suppose that $X_{\bullet}$ is a simplicial finite set. Then $\pi_n(X_{\bullet}\otimes A)$ is a finitely generated $\pi_0(A)$-module for all $n$. 
\end{lemma}
\begin{proof} 
First, a quick induction: if we have already shown that the graded $\pi_*(A)$-module $\pi_*(A^{\Smash m})$ is connective and finite-type, then by Lemma~\ref{finite type closure lemma}, the input for the 
the K\"{u}nneth spectral sequence 
\begin{align}\label{kss 13049} E^2_{s,t} \cong \Tor^{\pi_*(A)}_{s,t}(\pi_*(A\Smash A), \pi_*(A^{\Smash m})) &\Rightarrow \pi_{s+t}(A^{\Smash m+1}) \\ 
\nonumber d^r: E^r_{s,t} & \rightarrow E^r_{s-r,t+r-1}
\end{align}
is first-quadrant and hence strongly convergent, and consists of a finite-type graded $\pi_*(A)$-module on each $s$-line. The differentials in spectral sequence~\eqref{kss 13049} are $\pi_*(A)$-linear (see Theorem~IV.4.1 of~\cite{MR1417719}, which is stated in terms of commutative $S$-algebras, but the linearity of the spectral sequence differentials is formal and works for any model of commutative ring spectra), so by Lemma~\ref{finite type closure lemma}, for all integers $r\geq 2$ we have that each $s$-line in the $E^r$-page of spectral sequence~\eqref{kss 13049} is a finite-type graded $\pi_*(A)$-module. Any differential of length $>s$ supported on the $s$-line in spectral sequence~\ref{kss 13049} must be zero, but it is not impossible that the $s$-line could be hit by differentials of arbitrarily long length. Let $E^r_{s,*}$ denote the graded $\pi_*(A)$-module which is the the $s$-line in the $E^r$-page of spectral sequence~\ref{kss 13049}. Then, for each pair $(s,t)$, there exists some $N$ such that $E^r_{s,t} \cong E^{r+1}_{s,t}$ for all $r\geq N$. Consequently $E^{\infty}_{s,*}$ is the colimit of a sequence of graded $\pi_*(A)$-module surjections $E^{s+1}_{s,*} \rightarrow E^{s+2}_{s,*} \rightarrow\dots $ with the property that, for each integer $t$, the sequence of $\pi_0(A)$-modules 
$E^{s+1}_{s,t} \rightarrow E^{s+2}_{s,t} \rightarrow \dots$ is eventually constant. So the graded $\pi_*(A)$-module $E^{\infty}_{s,*}$ is finite-type.

Now $\pi_*(A^{\Smash m+1})$ admits a filtration whose filtration quotients are the 
rows in the $E^{\infty}$-page of~\eqref{kss 13049}. For any fixed choice of integer $N$, $\pi_*(A^{\Smash m+1})$ agrees in grading degrees $\leq N$ with a graded $\pi_*(A)$-module given by finitely many of the filtration quotients (eg the first $N+1$ rows in the $E^{\infty}$-page), due to the vanishing property in Lemma~\ref{finite type closure lemma}. So by Lemma~\ref{finite type closure lemma}, the graded $\pi_*(A)$-module $\pi_*(A^{\Smash m+1})$ is finite-type (and clearly connective), and we are ready to return to the inductive step.

So $\pi_*(A^{\Smash m})$ is a finite-type connective graded $\pi_*(A)$-module for each $m$. In particular, $\pi_n(A^{\Smash m})$ is a finitely generated $\pi_0(A)$-module for each $n$.
Consequently in the Bousfield-Kan-type spectral sequence
\begin{align*} 
 E^1_{s,t} \cong \pi_t(A^{\Smash \#(X_s)}) 
  & \Rightarrow \pi_{s+t}(X_{\bullet}\otimes A) \\
 d^r\co E^r_{s,t} 
  & \rightarrow E^r_{s-r,t+r-1}
\end{align*}
obtained by applying $\pi_*$ to the simplicial ring spectrum $X_{\bullet}\tilde{\otimes} A$ (here we are using the pretensor product, of Definition~\ref{def of tensoring}), 
each bidegree is a finitely generated $\pi_0(A)$-module, and the spectral sequence is half-plane with exiting differentials, hence also strongly convergent by Theorem~6.1 of~\cite{MR1718076}. 
Connectivity of each tensor power $A^{\Smash \#(X_s)}$ implies that only finitely many bidegrees in the $E^{\infty}$-page contribute to each total degree in $\pi_*(X_{\bullet}\otimes A)$.
Consequently $\pi_n(X_{\bullet}\otimes A)$ is a finitely generated $\pi_0(A)$-module for each integer $n$.
\end{proof}

\begin{theorem}\label{polynomial case 120}
Let $n$ be a positive integer, $p$ a prime number, and 
let $E$ be a $E_{\infty}$-ring spectrum such that either $\pi_*(E) \cong \hat{\mathbb{Z}}_p[x]$ or $\pi_*(E) \cong \mathbb{Z}_{(p)}[x]$, with $x$ in grading degree $2n$.
Then the Poincar\'{e} series of the mod $p$ topological Hochschild homology $(S/p)_*(THH(E))$ satisfies the inequality
\[ \sum_{i\geq 0} \left( \dim_{\mathbb{F}_p} (S/p)_*(THH(E))\right) t^i \leq
 \frac{(1 + (2p-1)t)(1 + (2n+1)t)}{(1 - 2nt)(1 - 2pt)}.\]
\end{theorem}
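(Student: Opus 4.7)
The plan is to combine Theorem~\ref{upper bound thm} with an essentially classical computation of the topological Hochschild homology of the generalized Eilenberg-Mac Lane ring spectrum $H\hat{\mathbb{Z}}_p[x]$.

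First I would apply Theorem~\ref{upper bound thm} with $X_\bullet$ the minimal simplicial model of $S^1$, generalized homology theory $(S/p)_*$, graded ring $R = \hat{\mathbb{Z}}_p[x]$, and with $A = E$: the conclusion is that $(S/p)_i(THH(E))$ is a subquotient of $(S/p)_i(THH(H\hat{\mathbb{Z}}_p[x]))$ for every integer $i$, so
\[
\dim_{\mathbb{F}_p}(S/p)_i(THH(E)) \;\le\; \dim_{\mathbb{F}_p}(S/p)_i(THH(H\hat{\mathbb{Z}}_p[x])).
\]
Hence it suffices to show that the Poincar\'{e} series of $(S/p)_*(THH(H\hat{\mathbb{Z}}_p[x]))$ is bounded above by the claimed rational function, where I am reading $(1+(2p-1)t)$ and the other factors in the statement as shorthand for $(1+t^{2p-1})$, etc.

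For this I would use base change for $THH$ over $H\hat{\mathbb{Z}}_p$, namely the standard equivalence
\[
THH(H\hat{\mathbb{Z}}_p[x]) \;\simeq\; THH(H\hat{\mathbb{Z}}_p) \wedge^{\mathbb{L}}_{H\hat{\mathbb{Z}}_p} THH^{H\hat{\mathbb{Z}}_p}(H\hat{\mathbb{Z}}_p[x]),
\]
together with the Hochschild-Kostant-Rosenberg identification
\[ \pi_* THH^{H\hat{\mathbb{Z}}_p}(H\hat{\mathbb{Z}}_p[x]) \;\cong\; \hat{\mathbb{Z}}_p[x] \otimes_{\hat{\mathbb{Z}}_p} \Lambda(dx),\qquad |dx| = 2n+1, \]
valid because $\hat{\mathbb{Z}}_p[x]$ is a smooth commutative $\hat{\mathbb{Z}}_p$-algebra. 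Because this graded module is free in each degree over $\hat{\mathbb{Z}}_p$, the K\"{u}nneth spectral sequence for the relative smash collapses, and the mod-$p$ universal coefficient sequence $0 \to \pi_i(Y)/p \to \pi_i(Y \wedge S/p) \to \pi_{i-1}(Y)[p] \to 0$ then implies that the Poincar\'{e} series of $(S/p)_*(THH(H\hat{\mathbb{Z}}_p[x]))$ is the product of the Poincar\'{e} series of $(S/p)_*(THH(H\hat{\mathbb{Z}}_p))$ and of $\mathbb{F}_p[x] \otimes \Lambda(dx)$.

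The first factor is computed from B\"{o}kstedt's calculation $\pi_{2k-1}(THH(H\hat{\mathbb{Z}}_p)) \cong \mathbb{Z}/p^{v_p(k)}$ (with $\pi_0 \cong \hat{\mathbb{Z}}_p$ and vanishing in other degrees) via the same universal coefficient sequence: a single copy of $\mathbb{F}_p$ in degrees $0$, $2jp-1$, and $2jp$ for $j\ge 1$, yielding Poincar\'{e} series $(1+t^{2p-1})/(1-t^{2p})$. The second factor has Poincar\'{e} series $(1+t^{2n+1})/(1-t^{2n})$, and multiplying these produces exactly the claimed bound. The main potential obstacle is justifying the base-change equivalence and HKR identification cleanly inside the symmetric monoidal model-categorical framework of Running Assumptions~\ref{ra:1} and~\ref{ra:2}; however, both are classical for commutative $H\hat{\mathbb{Z}}_p$-algebras and for the smooth polynomial algebra in question, so no genuine obstacle arises. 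It is worth remarking that this bound is an equality at the level of $H\pi_*(E) = H\hat{\mathbb{Z}}_p[x]$, so any strict inequality for a particular $E$ reflects additional collapse in the $THH$-May spectral sequence coming from the $k$-invariants of $E$.
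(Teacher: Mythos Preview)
Your proof is correct and reaches the same conclusion as the paper, but the computation of $(S/p)_*(THH(H\hat{\mathbb{Z}}_p[x]))$ proceeds along a genuinely different route. The paper invokes the Schw\"anzl--Vogt--Waldhausen splitting $THH(W\smash HK)\simeq THH(W)\smash THH(HK)$ with $K=\hat{\mathbb{Z}}_p$ and $W$ the free $A_\infty$-algebra on a single $2n$-cell, then runs the B\"okstedt spectral sequence on $THH(W)$ to obtain $(H\mathbb{F}_p)_*(THH(W))\cong P(x)\otimes E(\sigma x)$; this reduces the problem to an \emph{absolute} smash over the sphere. Your argument instead uses the base-change equivalence for relative $THH$ over $H\hat{\mathbb{Z}}_p$ together with HKR for the smooth $\hat{\mathbb{Z}}_p$-algebra $\hat{\mathbb{Z}}_p[x]$, and then recovers the mod~$p$ Poincar\'e series of $THH(H\hat{\mathbb{Z}}_p)$ from the integral B\"okstedt computation via universal coefficients. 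The paper's version is slightly more self-contained (one external splitting lemma plus the mod~$p$ B\"okstedt answer directly), whereas yours is more conceptual from the Hochschild/HKR viewpoint and avoids introducing the auxiliary $A_\infty$-algebra $W$; both yield the same graded $\mathbb{F}_p$-vector space $E(\lambda_1,\sigma x)\otimes P(\mu_1,x)$ and hence the same Poincar\'e series. Two minor remarks: your reading of $(1+(2p-1)t)$ as $(1+t^{2p-1})$, etc., is the intended one; and the precise statement of Theorem~\ref{upper bound thm} in the paper is for $\pi_*$, so the version you actually invoke is the more general subquotient principle stated in the paragraph immediately preceding it.
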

\begin{proof}
It is a classical computation of B\"{o}kstedt (see \cite{bok}) that \[(S/p)_*(THH(H\hat{\mathbb{Z}}_p)) \cong (S/p)_*(THH(H\mathbb{Z}_{(p)})) \cong E(\lambda_1)\otimes_{\mathbb{F}_p} P(\mu_1),\] with 
$\lambda_1$ and $\mu_1$ in grading degrees $2p-1$ and $2p$ respectively. 

Now we use the splitting theorem of Schw\"anzl, Vogt, and Waldhausen, Lemma 3.1 
of \cite{MR1783629}: if $K$ is a commutative ring, 
and $W$ is a $q$-cofibrant $S$-algebra (ie, up to equivalence, an $A_{\infty}$-ring spectrum), then there exists a weak equivalence of $S$-modules (not necessarily a weak equivalence of $S$-algebras!):
\[ THH(W\Smash HK)\simeq THH(W)\Smash THH(HK) \simeq (THH(W)\Smash HK)\Smash_{HK} THH(HK). \]

In our case, $W$ is the free $A_{\infty}$-algebra on a single $2n$-cell, and $K = \hat{\mathbb{Z}}_p$ or $\mathbb{Z}_{(p)}$. Hence 
$THH(W)\Smash HK$
satisfies 
\[ (S/p)_*(THH(W)\Smash HK) \cong (H\mathbb{F}_p)_*(THH(W)) \cong P(x)\otimes_{\mathbb{F}_p} E(\sigma x),\]
by collapse of the B\"okstedt spectral sequence for bidegree reasons. Hence 
\[ (S/p)_*(THH(H\hat{\mathbb{Z}}_p[x])) \cong (S/p)_*(THH(H\mathbb{Z}_{(p)}[x])) \cong E(\lambda_1,\sigma x)\otimes_{\mathbb{F}_p} P(\mu_1,x), \] 
 as a graded $\mathbb{F}_p$-vector space (but not necessarily as $\mathbb{F}_p$-algebras!),
which has Poincar\'{e} series $\frac{(1 + (2p-1)t)(1 + (2n+1)t)}{(1 - 2nt)(1 - 2pt)}$.
\end{proof}

Now we give a few amusing consequences of Theorem~\ref{polynomial case 120}.
Recall that a spectrum is said to be {\em finite-type} if it is weakly equivalent to a CW-spectrum with finitely many cells in each dimension; if a spectrum $X$ is connective, then $X$ is finite-type if and only if the graded $\mathbb{Z}$-module $H_*(X; \mathbb{Z})$ is finite-type.
We will say that $X$ is {\em $p$-local finite-type} if $X$ is weakly equivalent to a $p$-local CW-spectrum with finitely many $S_{(p)}$-cells in each dimension. Again, if a $p$-local spectrum $X$ is connective, then $X$ is $p$-local finite-type if and only if the graded $\mathbb{Z}_{(p)}$-module $H_*(X; \mathbb{Z}_{(p)})$ is finite-type.
\begin{corollary} \label{main corollary 0329}
Let $n$ be a positive integer, $p$ a prime number, and 
let $E$ be a $p$-local finite-type $E_{\infty}$-ring spectrum such that $\pi_*(E) \cong \mathbb{Z}_{(p)}[x]$, with $x$ in grading degree $2n>0$.
\begin{itemize}
\item If $p$ does not divide $n$, then $THH_{2i}(E) \cong 0$ for all $i$ congruent to $-p$ modulo $n$ such that $i\leq pn-p-n$, and
$THH_{2i}(E) \cong 0$ for all $i$ congruent to $-n$ modulo $p$ such that $i\leq pn-p-n$.
In particular, $THH_{2(pn-p-n)}(E) \cong 0$.
\item If $p$ divides $n$, then $THH_{i}(E)\cong 0$, unless $i$ is congruent to $-1,0,$ or $1$ modulo $2p$.
\end{itemize}
\end{corollary}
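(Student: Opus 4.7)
The plan is to combine Theorem~\ref{polynomial case 120} with two ingredients: (i) a Nakayama-type argument to pass from vanishing mod $p$ to integral vanishing, and (ii) a combinatorial analysis of the numerator Poincar\'{e} series $(1+(2p-1)t)(1+(2n+1)t)/((1-2nt)(1-2pt))$ in the relevant residue classes.

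First, for the passage from $(S/p)_*$ to integral homotopy, I would argue that $\pi_*(THH(E))$ is degreewise a finitely generated $\hat{\mathbb{Z}}_p$-module. Indeed, the argument of Lemma~\ref{finite generation of THH lemma} carries over verbatim when ``finitely generated abelian group'' is replaced throughout by ``finitely generated $\hat{\mathbb{Z}}_p$-module,'' since both the K\"{u}nneth and Bousfield--Kan spectral sequences are $\hat{\mathbb{Z}}_p$-linear and $\pi_*(E)\cong \hat{\mathbb{Z}}_p[x]$ is degreewise finitely generated over $\hat{\mathbb{Z}}_p$. Then the cofiber sequence $S\xrightarrow{p} S \to S/p$ yields a short exact sequence
\[ 0 \to THH_i(E)/p \to (S/p)_i(THH(E)) \to THH_{i-1}(E)[p] \to 0, \]
so if $(S/p)_i(THH(E))=0$ then $THH_i(E)/p=0$, whence $THH_i(E)=0$ by Nakayama. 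Thus it suffices to prove that $(S/p)_i(THH(E))$ vanishes in each of the stated residue/range configurations.

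By Theorem~\ref{polynomial case 120}, $\dim_{\mathbb{F}_p}(S/p)_i(THH(E))$ is bounded above by the coefficient of $t^i$ in the Poincar\'{e} series of $E(\lambda_1,\sigma x)\otimes_{\mathbb{F}_p}P(\mu_1,x)$, with $|\lambda_1|=2p-1$, $|\mu_1|=2p$, $|\sigma x|=2n+1$, $|x|=2n$. So it is enough to show that this coefficient vanishes in the asserted cases. A monomial has degree $a(2p-1)+b(2n+1)+c(2p)+d(2n)$ for $a,b\in\{0,1\}$ and $c,d\in\mathbb{N}$; for the total degree to be even we must have $(a,b)=(0,0)$ or $(1,1)$, so evenly-graded monomials are in bijection with pairs $(c,d)\in\mathbb{N}^2$ with $cp+dn=i$ (the $(0,0)$ case) together with pairs $(c,d)\in\mathbb{N}_{\geq 1}^2$ with $cp+dn=i$ (the $(1,1)$ case, after reindexing). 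When $p\nmid n$, writing $i=kn-p$ with $0\le k\le p-1$ (so $i\equiv -p\pmod n$ and $i\le pn-p-n$), the congruence $cp\equiv -p\pmod n$ forces $c\ge n-1$, which then forces $d\le k-p<0$; the $(1,1)$ case requires $i\ge pn+n-p$, which also fails. The symmetric analysis handles $i\equiv -n\pmod p$, and both residue classes simultaneously contain $i=pn-p-n$, giving the ``in particular'' statement. Finally, for $p\mid n$ every generator of $E(\lambda_1,\sigma x)\otimes P(\mu_1,x)$ has degree congruent to $-1$, $0$, or $1$ modulo $2p$ (since $2n\equiv 0\pmod{2p}$), and the additive closure of $\{-1,0,1\}$ under the constraint $a,b\in\{0,1\}$ remains $\{-1,0,1\}$ modulo $2p$, proving the second bullet point.

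The routine steps are the Frobenius/Chicken-McNugget counting and the mod $2p$ residue check; the only slightly delicate step is the Nakayama reduction, which requires verifying that $\pi_*(THH(E))$ is degreewise finitely generated over $\hat{\mathbb{Z}}_p$, and I expect that to be the main technical (but not genuinely hard) point.
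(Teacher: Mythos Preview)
Your proof is correct and follows essentially the same route as the paper's: bound $(S/p)_*(THH(E))$ by the Poincar\'e series of $E(\lambda_1,\sigma x)\otimes P(\mu_1,x)$, do the Frobenius-number combinatorics to show vanishing in the relevant degrees, and then use $p$-divisibility plus finite generation to conclude integral vanishing. Your care in arguing finite generation over $\hat{\mathbb{Z}}_p$ rather than over $\mathbb{Z}$ is in fact a slight improvement on the paper, which invokes its abelian-group finite-generation lemma even though $\pi_0(E)\cong\hat{\mathbb{Z}}_p$ is not finitely generated as an abelian group; your Nakayama formulation is the clean way to phrase the conclusion.
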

\begin{proof}
Since we have assumed that $E$ is $p$-local finite-type, each homology group $H_s(E; \pi_*(E))$ is a finite-type graded $\pi_*(E)$-module, and Consequently each $s$-line in the Atiyah-Hirzebruch spectral sequence
\begin{align}\label{ahss 02398} E^2_{s,t} \cong H_s(E; \pi_t(E)) & \Rightarrow \pi_{s+t}(E\Smash E) \\
\nonumber d^r: E^r_{s,t} &\rightarrow E^r_{s+r,t-r+1} \end{align}
is a finite-type graded $\pi_*(E)$-module. Connectivity of $E$ ensures strong convergence of the spectral sequence, as in Theorem~12.2 of~\cite{MR1718076}. As the Atiyah-Hirzebruch spectral sequence is the spectral sequence obtained by applying $E$-homology to a CW-decomposition of $E$, an easy analysis of the spectral sequence of a tower of cofiber sequences shows that the spectral sequence differentials are graded $\pi_*(E)$-module morphisms. Consequently Lemma~\ref{finite type closure lemma} implies that each $s$-line in the $E^r$-page of spectral sequence~\eqref{ahss 02398} is a finite-type graded $\pi_*(E)$-module, for each $r\geq 2$. Now an argument exactly like that used in the proof of Lemma~\ref{finite generation of THH lemma} implies that each $s$-line in the $E^{\infty}$-page of~\eqref{ahss 02398} is also a finite-type graded $\pi_*(E)$-module, and that $\pi_*(E\Smash E)$ is a finite-type graded $\pi_*(E)$-module. So we can make use of Lemma~\ref{finite generation of THH lemma} later in this proof.

Now we split the proof into two cases: the case where $p \nmid n$ and the case where $p|n$. 

Case 1: If $p$ does not divide $n$, then the largest integer $i$ such that the graded polynomial algebra $P(\mu_1,x)$ is trivial in grading degree $2i$ is
$2(pn-p-n)$. (This is a standard exercise in elementary number theory. In schools in the United States it is often presented to students in a form like ``What is the largest integer $N$ such that you cannot make exactly $5N$ cents using only dimes and quarters?'') Triviality of $P(\mu_1,x)$ in grading degree $2(pn-p-n)$ also implies
triviality of $P(\mu_1,x)$ in grading degree $2(pn-p-n) - 2(p+n)$, hence the triviality of $E(\lambda_1,\sigma x)\otimes_{\mathbb{F}_p} P(\mu_1,x)$ in grading degree $2(pn-p-n)$, hence (multiplying by powers of $x$ or $\mu_1$) the triviality of
$E(\lambda_1,\sigma x)\otimes_{\mathbb{F}_p} P(\mu_1,x)$ in all grading degrees $\leq 2(pn-p-n)$ which are congruent to $-2p$ modulo $2n$ or congruent to $-2n$ modulo $2p$.

So $(S/p)_{2i}(THH(E))$ vanishes if $i \leq pn-p-n$ and $i\equiv -p$ modulo $n$ or $i \equiv -n$ modulo $p$.
The long exact sequence
\[ \dots \rightarrow (S/p)_{2i+1}(THH(E)) \rightarrow THH_{2i}(E) \stackrel{p}{\longrightarrow} THH_{2i}(E) \rightarrow (S/p)_{2i}(E) \rightarrow \dots \]
then implies that $THH_{2i}(E)$ is $p$-divisible. By Lemma~\ref{finite generation of THH lemma}, $THH_{2i}(E)$ is a finitely generated $\pi_0(E)$-module. 
Since $\pi_0(E)\cong \mathbb{Z}_{(p)}$ is a PID, one knows its finitely generated modules explicitly, and 
the only finitely generated $\mathbb{Z}_{(p)}$-module which is $p$-divisible 
is the trivial module.

Case 2: If $p$ divides $n$, then $E(\lambda_1,\sigma x)\otimes_{\mathbb{F}_p} P(\mu_1,x)$ is concentrated in grading degrees congruent to $-1,0$ and $1$ modulo $2p$. 
An argument exactly as in the previous part of this proof then shows that, if $i$ is not congruent to $-1,0,$ or $1$ modulo $2p$, then $THH_{i}(E)$ must be a $p$-divisible finitely generated 
$\mathbb{Z}_{(p)}$-module, hence is trivial.
\end{proof}
\appendix
\section{The THH-May spectral sequence with filtered coefficients.}\label{THH-May w coeff}
In this appendix, we briefly describe how to generalize the THH-May spectral sequence to include filtered coefficients; ie, given a pointed simplicial finite set $Y_{\bullet}$, a connective generalized homology theory, $E_{*}$, as in Definition \ref{def of gen hom thy}, a decreasingly filtered commutative monoid in $\mathcal{C}$, $I_{\bullet}$, and a decreasingly filtered $I_{\bullet}$-module in $\mathcal{C}$, $M_{\bullet}$ (see Definition \ref{filtred coeff}), there is a spectral sequence
\begin{equation}\label{THH-May ss coeff} E^1_{*,*}=E_{*,*}\left (Y_{\bullet}\otimes (E_0I_{\bullet};E_0M_{\bullet}) \right )\Rightarrow Y_{\bullet}\otimes (I_0;M_0) \end{equation}
where $Y_{\bullet}\otimes (I_0;M_0)$ is is defined in Definition \ref{loday coeff} and $E_0M_{\bullet}$ is defined in Remark \ref{assoc grd module}. 
\subsection{Filtered coefficients.}
Recall from Remark \ref{rem filt comm mon} and Section \ref{convenient model cat for filtered objects} that a cofibrant object in $\Comm \mathcal{C}^{\mathcal{N}^{\op}}$ (with the model structure created by the forgetful functor to $\mathcal{C}^{\mathcal{N}^{\op}}$, where $\mathcal{C}^{\mathcal{N}^{\op}}$ is equipped with the projective model structure) is a cofibrant decreasingly filtered commutative monoid in $\mathcal{C}$ in the sense of Definition \ref{def of dec filt comm mon}. 
\begin{definition}\label{filtred coeff}
Let $I_{\bullet}$ be a cofibrant object in $\Comm \mathcal{C}^{\mathcal{N}^{\op}}$. By a cofibrant decreasingly filtered $I_{\bullet}$-module we mean a cofibrant object in $\mathcal{C}^{\mathcal{N}^{\op}}$ with the structure of a $I_{\bullet}$-module (equivalently a symmetric $I_{\bullet}$-bimodule); ie, there are natural transformations
\[ \psi^{\ell}:I_{\bullet}\otimes_{Day}M_{\bullet}\rightarrow M_{\bullet} \text{ and } \]
\[ \psi^{r}:M_{\bullet}\otimes_{Day}I_{\bullet}\rightarrow M_{\bullet}  \]
satisfying the usual commutative diagrams for a module over a commutative monoid along with the commutativity of the diagram 
\[ 
\xymatrix{  I_{\bullet}\otimes_{Day}M_{\bullet} \ar[rr]^{\tau}  \ar[dr]^{\psi^{\ell}} & &I_{\bullet}\otimes_{Day}M_{\bullet}  \ar[dl]^{\psi^{r}}  \\
 & M_{\bullet} & }\]
where $\tau$ is the factor swap map that we have by the definition of $\mathcal{C}^{\mathcal{N}^{\op}}$ as a symmetric monoidal category with respect to $\otimes_{Day}$. 
\end{definition}
\begin{remark}\label{assoc grd module}
There is an associated graded symmetric $E_0^*I_{\bullet}$-bimodule, $E_0^*M_{\bullet}$, which can be defined in a similar way to $E_0^*I_{\bullet}$; ie as an object in $\mathcal{C}$, it is
\[ E_0^*M_{\bullet}=\bigvee_{i\ge 0} M_i /M_{i+1} \]
and to define it as a symmetric $E_0I_{\bullet}$-bimodule we define maps 
\[ I_i/I_{i+1} \Smash M_j/M_{j+1} \rightarrow M_{i+j}/M_{i+j+1} \]
using the structure maps $\psi^{\ell}$ and $\psi^{r}$ along with the structure maps $g_m:M_m\rightarrow M_{m-1}$ and the structure maps of $I_{\bullet}$ and then extend to a map 
\[ \left( \bigvee_{i\ge 0} I_i/I_{i+1}\right )\Smash \left (\bigvee_{i\ge 0}M_j/M_{j+1} \right) \rightarrow \bigvee_{i\ge 0} M_{i+j}/M_{i+j+1} \]
in the same way as in Definition \ref{assoc graded monoid}. 
\end{remark}
\subsection{Loday construction in the pointed setting.}
\begin{definition}\label{loday coeff}
For a finite pointed set $S$ write $*_S$ for basepoint of $S$ and $S'$ for $S-\{*_S\}$. Given a cofibrant commutative monoid $R$ in a model category $\mathcal{C}$ satisfying Running Assumption \ref{ra:1} define a functor 
\[ fSet_*\times R\text{-mod} \rightarrow R\text{-mod}\]
by sending $(S,N)$ to $ N\Smash \bigsmash_{s\in S} R$
and on morphisms by sending a based map  of finite sets $S\rightarrow T$ to the composite map 
\[ N\Smash \bigsmash_{s\in S} R \simeq  N\Smash \bigsmash_{s\in f^{-1}(*_T)} R\Smash \bigsmash_{s\in S-f^{-1}(*_T)} R \rightarrow X\Smash \bigsmash_{t\in T} R \]
defined as follows: the first map is the factor swap map and the second map is the smash product of the iterate of the (right) action map 
\[ N\Smash \bigsmash_{s\in f^{-1}(*_T)} R \rightarrow N \]
and the map 
\[ \bigsmash_{s\in S-f^{-1}(*_T)} R \rightarrow \bigsmash_{s\in S-f^{-1}(*_T)} R \]
as defined as in Definition \ref{def of tensoring}. 
This functor naturally extends to a functor 
\[(-)\tilde{\otimes}(R;-)\co sfSet_*\times R\text{-mod}  \rightarrow sR\text{-mod}\] 
by sending $(Y_{\bullet},N)$ to the simplicial $R$-module with $n$-simplices 
\[ N\Smash \bigsmash_{y\in Y_n'} R \]
and using the functoriality of the functor $fSet_*\times R\text{-mod} \rightarrow R\text{-mod}$ to define the face and degeneracy maps. We can therefore define
\[ Y_{\bullet}\otimes (R;N)=|Y_{\bullet}\otimes (R;N)|.\]
\end{definition}
\begin{remark}
This construction is sufficiently general that given a cofibrant object $I_{\bullet}$ in $\Comm \mathcal{C}^{\mathcal{N}^{\op}}$ and a cofibrant decreasingly filtered $I_{\bullet}$-module in $\mathcal{C}$, $M_{\bullet}$, we can define an object in $\Comm^{\mathcal{N}^{\op}}$
\[ Y_{\bullet}\otimes (I_{\bullet};M_{\bullet}).\] 
\end{remark} 
\subsection{The fundamental theorem of the May filtration with coefficients.}
Recall that the fundamental theorem of the May filtration may be described using the slogan ``higher order Hochschild homology commutes with passage to the associated graded commutative ring spectrum."
\begin{definition}
Let $\mathcal{C}$ satisfy Running Assumptions \ref{ra:1} and \ref{ra:2}. Given a pointed simplicial finite set $Y_{\bullet}$, a cofibrant object in $\Comm \mathcal{C}^{\mathcal{N}^{\op}}$, $I_{\bullet}$, and a cofibrant decreasingly filtered $I_{\bullet}$-module, $M_{\bullet}$, we define \emph{the May filtration of $M_0\Smash \bigsmash_{y\in Y_0} I_0$} to be the cofibrant decreasingly filtered $I_{\bullet}$-module in $\mathcal{C}$:
$ Y_{\bullet} \otimes (I_{\bullet} ; M_{\bullet}).$
\end{definition}
\begin{remark}
Note that Running Assumption \ref{ra:2} is needed to ensure that $Y_{\bullet} \otimes (I_{\bullet} ; M_{\bullet})$ is actually a \emph{cofibrant} object in $\mathcal{C}^{\mathcal{N}^{\op}}$. 
\end{remark}
\begin{theorem}{\bf (Fundamental theorem of the May filtration with coefficients.)}
There is an equivalence of $E_0^*I_{\bullet}$-modules 
\[ E_0^*\left (Y_{\bullet} \otimes( I_{\bullet} \otimes M_{\bullet})\right )\simeq Y_{\bullet} \otimes (E_0^*I_{\bullet},E_0^*M_{\bullet}). \]
\end{theorem}
\begin{proof}
To prove the theorem requires generalizing all of the definitions and lemmas from Section \ref{fund thm section} to the pointed setting. The proofs of each of these generalizations of the lemmas from Section \ref{fund thm section} follow in an evident way from the proofs that are already given. We therefore do not reprove them. The proof of the fundamental theorem of the May filtration also follows from the evident generalizations of the lemmas in Section \ref{fund thm section} in the same way as the proof of Theorem \ref{thm on fund theorem}. 
\end{proof}
\begin{remark}\label{THH-May coeff}
The construction of the spectral sequence is exactly the same and therefore we do not discuss it here. To prove convergence and strong convergence of the spectral sequence we must prove the lemmas and theorems of Section \ref{construction of ss} in the pointed setting, but these generalizations follow easily. We therefore produce a spectral sequence of the form \ref{THH-May ss coeff} in the same way as before. 
\end{remark}
\section{The Bousfield-Kan spectral sequence.}\label{BK}
\subsection{The BKSS and nulhomotopic maps out of diagrams.} \label{BKss}
\begin{definition}\label{def of sr}
Given a small category $\mathcal{D}$, a category $\mathcal{A}$, and a functor $\mathcal{F}: \mathcal{D} \rightarrow\mathcal{A}$, we adopt these notations:
we will write $\mathcal{D}^0$ for the set of objects of $\mathcal{D}$, we will write $\mathcal{D}^1$ for the set of morphisms in $\mathcal{D}$, and if $n$ is a positive integer, we will write $\mathcal{D}^n$ for the set of composable ordered $n$-tuples of morphisms in $\mathcal{D}$,
given a composable ordered $n$-tuples of morphisms $d = \left( X_0 \stackrel{f_1}{\longrightarrow} \dots \stackrel{f_n}{\longrightarrow} X_n\right)$ in $\mathcal{D}$, we will write $\tilde{d}$ for $X_n$,
we will write $\sr(\mathcal{F})$ for the {\em simplicial replacement of $\mathcal{F}$}, that is, the simplicial object of $\mathcal{A}$ given by 
\[\xymatrix{
\coprod_{d\in \mathcal{D}^0} \mathcal{F}(\tilde{d})
 \ar[r] & 
  \coprod_{d\in \mathcal{D}^1} \mathcal{F}(\tilde{d}) \ar@<1ex>[l]\ar@<-1ex>[l]\ar@<1ex>[r]\ar@<-1ex>[r] &
  \coprod_{d\in \mathcal{D}^2} \mathcal{F}(\tilde{d}) \ar@<2ex>[l]\ar@<-2ex>[l]\ar[l]
                                 \ar@<2ex>[r]\ar@<-2ex>[r]\ar[r] &
   \dots \ar@<3ex>[l]\ar@<-3ex>[l]\ar@<1ex>[l]\ar@<-1ex>[l]   }\]
with face and degeneracy maps induced by composition and inserting of an identity morphism, respectively, as operations on composable ordered tuples of morphisms in $\mathcal{D}$. (This construction is standard; see eg section XI.5.1 of~\cite{MR0365573}.)
\end{definition}

The following theorem seems to have had a long history: it seems as though it was understood in some form, at least in the setting of simplicial sets, to Bousfield and Kan when they wrote~\cite{MR0365573}, and a proof is sketched in Corollary~9.8 and Proposition~9.11 of Dugger's unpublished notes~\cite{dugco}, limited to the setting of topological spaces, but clearly using the technology of~\cite{MR584566} which applies to much more general model categories. Finally, a really clear and general treatment can be derived from Theorem 3.2 and Theorem 3.3 of Gambino's paper \cite{MR2586997}, as explained in Section~4 of \cite{MR2586997}. Gambino's result only requires that $\mathcal{C}$ is a simplicial model category.
\begin{theorem}\label{thm from gambino}
Suppose that $\mathcal{D}$ is a small category such that the projective model structure on $\mathcal{C}^{\mathcal{D}}$ exists. Suppose that $\mathcal{F}:\mathcal{D}\rightarrow\mathcal{C}$ is projectively cofibrant. Then the natural map $\left| \sr(\mathcal{F}) \right| \rightarrow \colim\mathcal{F}$ in $\mathcal{C}$ is a weak equivalence.
\end{theorem}
Theorem \ref{thm from gambino} is our justification for the following notational convention: in this section and the following section, we will write $\hocolim$ for the Bousfield-Kan model for the homotopy colimit, as in \cite{MR0365573}; that is, $\hocolim \mathcal{F} = \left| \sr(\mathcal{F})\right|$.

\begin{definition}\label{def of E}
Let $\mathcal{D}$ be a small category, and write $\Ab$ for the category of abelian groups.
We will write $E$ for the allowable class, in the sense of relative homological algebra (see chapter~IX of~\cite{MR1344215}), on the category of functors $\Ab^{\mathcal{D}}$, given as follows:
a short exact sequence $0 \rightarrow \mathcal{F}^{\prime} \rightarrow \mathcal{F} \rightarrow \mathcal{F}^{\prime\prime} \rightarrow 0$ in $\Ab^{\mathcal{D}}$ is in $E$ if and only if the short exact sequence of abelian groups $0 \rightarrow \mathcal{F}^{\prime}(d) \rightarrow \mathcal{F}(d) \rightarrow \mathcal{F}^{\prime\prime}(d) \rightarrow 0$ is split for all objects $d$ of $\mathcal{D}$.
\end{definition}

In the case where the underlying model category is simplicial sets (not a stable model category!), the spectral sequence of Theorem~\ref{bkss existence thm} was constructed by Bousfield and Kan in chapter~XII of~\cite{MR0365573}. It is a widespread bit of folklore than the construction also works in more general model categories (see eg the discussion preceding Proposition~A.10 in~\cite{hoveyss}), so we give only a sketch of the proof of Theorem~\ref{bkss existence thm}. 

In Theorem~\ref{bkss existence thm}, as everywhere in this paper, we let $\mathcal{C}$ be as in Running Assumption~\ref{ra:1}, but in the proof we sketch, we only use the assumptions that $\mathcal{C}$ is cocomplete, stable, closed and simplicial, and left proper.
\begin{theorem}\label{bkss existence thm} {\bf (The Bousfield-Kan spectral sequence (BKSS) for generalized cohomology of a homotopy colimit.)}
Let $\mathcal{D}$ be a small category and let $F: \mathcal{D} \rightarrow\mathcal{C}$ be a functor such that $F(d)$ is cofibrant for all $d\in\ob\mathcal{D}$. Let $E$ be as in Definition~\ref{def of E}. Let $X$ be a fibrant object of $\mathcal{C}$. 
Then there exists a spectral sequence
\begin{equation}\label{bkss 1} E^2_{s,t} \cong (R^s_E\lim_{d\in\mathcal{D}})\left( [\Sigma^{t} F(d), X]\right)  \Rightarrow \left[\Sigma^{t-s}\hocolim F, X\right]  
\end{equation}
which is strongly convergent if the functor $\lim \co \Ab^{\mathcal{D}} \rightarrow\Ab$ is of finite $E$-injective dimension (ie, if there exists $N\in\mathbb{Z}$ such that $(R^s_E\lim)(\mathcal{F})$ vanishes for all $s>N$ and all $\mathcal{F}\in\ob\Ab^{\mathcal{D}}$).
\end{theorem}
\begin{proof}[Sketch of proof:]
It is routine to show that the cofibrancy assumption on each $F(d)$ is enough to imply that the simplicial object $\sr(F)$ of $\mathcal{C}$ is Reedy-cofibrant, and Consequently that the geometric realization $\left| \sr(F)\right|$ is a homotopy colimit for $F$.
Reedy cofibrancy plays another role here, however: 
the latching comparison maps $L_n\sr(F) \rightarrow \sr(F)_n$ are each cofibrations in $\mathcal{C}$, and Consequently (by Quillen's ``condition SM7(b),'' which holds for any closed simplicial model category by Proposition~2.3 of~\cite{MR0223432}) the natural map $(L_n\sr(F)\otimes \Delta^n) \coprod_{L_n\sr(F)\otimes \delta\Delta^n} (\sr(F)_n\otimes \delta \Delta^n) \rightarrow \sr(F)_n \otimes \Delta^n$ is a cofibration in $\mathcal{C}$. By Lemma~3.1 of~\cite{reedy}, we can build 
$\left| \sr(F)\right|$ as a colimit $\colim_n \left| \sr(F)\right|_n$ of objects $\left| \sr(F)\right|_n$ given inductively by
$\left|\sr(F)\right|_0 = \sr(F)_0$ and for each positive integer $n$ a pushout square 
\begin{equation}\label{reedy square 32099}\xymatrix{
 (L_n\sr(F)\otimes \Delta^n) \coprod_{L_n\sr(F)\otimes \delta\Delta^n} (\sr(F)_n\otimes \delta \Delta^n) \ar[r]\ar[d] & \sr(F)_n \otimes \Delta^n \ar[d] \\
 \left| \sr(F) \right|_{n-1} \ar[r] & \left| \sr(F) \right|_{n}.
}\end{equation}
Now since each $F(d)$ is assumed cofibrant, $\coprod_{d\in \mathcal{D}^0}F(d) = \sr(F)_0 = \left| \sr(F)\right|_0$ is also cofibrant. Consequently, when $n=1$, square~\ref{reedy square 32099} is a pushout square in which the two objects on the left are cofibrant, and the top vertical map is a cofibration, so left properness of $\mathcal{C}$ implies that square~\ref{reedy square 32099} is also a homotopy pushout square, and that $\left| \sr(F) \right|_0 \rightarrow \left| \sr(F) \right|_1$ is a cofibration, hence that $\left| \sr(F) \right|_1$ is cofibrant.

That was the initial step in an induction. The inductive step works as follows: since $0 \otimes \delta\Delta^n \cong 0\otimes \Delta^n \cong 0$, another application of Quillen's condition SM7(b) gives us that, since $L_n\sr(F)$ is cofibrant (as it is a colimit of cofibrant objects), the map $L_n\sr(F)\otimes \delta\Delta^n \rightarrow L_n\sr(F) \otimes\Delta^n$ is a cofibration, and hence $L_n\sr(F)\otimes \Delta^n \coprod_{L_n\sr(F)\otimes \delta\Delta^n} \sr(F)_n\otimes \delta \Delta^n$ is cofibrant, again using left properness of $\mathcal{C}$. So, if we have already shown that $\left| \sr(F) \right|_{n-1}$ is cofibrant, then the bottom horizontal map in~\ref{reedy square 32099} is a pushout of a cofibration, hence is a cofibration, hence $\left| \sr(F)\right|_n$ is cofibrant; and furthermore the square~\ref{reedy square 32099} is a homotopy pushout square, with horizontal cofiber $\Sigma^n \sr(F)_n$ by an argument dual to Proposition~X.6.3 of~\cite{MR0365573}.  By induction, we have a tower of cofiber sequences
\[ \xymatrix{ \left| \sr(F)\right|_0 \ar[r]\ar[d]^{\cong} & \left| \sr(F)\right|_1 \ar[r]\ar[d] & \left| \sr(F)\right|_2 \ar[r]\ar[d] & \dots \\
 \sr(F)_0 & \Sigma \sr(F)_1 & \Sigma^2 \sr(F)_2 & }\]
and, applying the generalized cohomology theory on $\Ho(\mathcal{C})$ represented by $X$, we get spectral sequence~\eqref{bkss 1}. 
\end{proof}

Similarly, we have the following theorem:
\begin{theorem}\label{homological bkss existence thm} {\bf (The Bousfield-Kan spectral sequence (BKSS) for generalized homology of a homotopy colimit.)}
Let $\mathcal{D}$ be a small category and let $F: \mathcal{D} \rightarrow\mathcal{C}$ be a functor such that $F(d)$ is cofibrant for all $d\in\ob\mathcal{D}$. Let $E$ be as in Definition~\ref{def of E}. Let $H_*$ be a connective generalized homology theory on $\mathcal{C}$, 
as defined in Definition~\ref{def of gen hom thy}.
Then there exists a strongly convergent spectral sequence
\begin{equation*}
E^2_{s,t} \cong (L^E_s\underset{d\in\mathcal{D}}{\colim})\left( H_t(F(d))\right)  \Rightarrow H_{s+t}\hocolim F.
\end{equation*}
\end{theorem}
\begin{proof}
Essentially the same argument as in Theorem~\ref{bkss existence thm}, which is only a generalization of the proof of XII.5.7 of~\cite{MR0365573}. Since $\hocolim F \cong \left| \sr(F)\right|$ is the cofiber of the map
\[ \coprod_{n\in \mathbb{N}} \left| \sr(F) \right|_n \stackrel{\id - T}{\longrightarrow} \coprod_{n\in \mathbb{N}} \left| \sr(F) \right|_n,\]
the exactness and additivity conditions in Definition~\ref{def of gen hom thy} ensure that the homology of $\hocolim F$ is the colimit of the homologies of the finite stages $\left| \sr(F)\right|_n$, ie, the spectral sequence computes $H_*(\hocolim F)$ as claimed.
Strong convergence is automatic since this is a half-plane spectral sequence with exiting differentials, as in~\cite{MR1718076}. 
\end{proof}

The rest of this section is occupied with an application of Theorem~\ref{bkss existence thm}. Here is the basic problem to be solved:
\begin{question}\label{question on nulhomotopies}
Suppose we are given a diagram of objects in our model category $\mathcal{C}$, and a map from the diagram to some particular object $X$. Suppose that the map from each of the objects in the diagram to $X$ is nulhomotopic. Do all these maps from objects in the diagram to $X$ factor through some homotopy colimit of the diagram, such that the map from the homotopy colimit to $X$ is also nulhomotopic?
\end{question} 
The answer to this question is not always yes: it depends on the diagram, and on $X$, and on the maps to $X$! The only tool we really need to answer Question~\ref{question on nulhomotopies} is Theorem~\ref{bkss existence thm}, so while Question~\ref{question on nulhomotopies} does not appear in~\cite{MR0365573} or in~\cite{reedy} or in any other reference we know of, nevertheless the question seems natural enough (and, in some situations, unavoidable enough) and its solution, in Theorem~\ref{answer to question}, is a direct enough application of ideas from the 1970s that we do not claim that anything in this section is really ``new.''
\begin{theorem}\label{answer to question}
Let $\mathcal{C}$ be as in Running Assumptions~\ref{ra:1}. Suppose that $\mathcal{D}$ is a small category, $F: \mathcal{D} \rightarrow \mathcal{C}$ a functor such that $F(d)$ is cofibrant for each $d\in\ob\mathcal{D}$, $X$ a fibrant object of $\mathcal{C}$ and $\overline{X}: \mathcal{D} \rightarrow\mathcal{C}$ the constant $X$-valued functor on $\mathcal{C}$, and $\eta: F \rightarrow \overline{X}$ is a natural transformation such that $\eta(d)$ is nulhomotopic for each $d\in\ob\mathcal{D}$. Let $E$ be as in Definition~\ref{def of E}, and suppose that the functor $\lim: \Ab^{\mathcal{D}} \rightarrow \Ab$ has finite $E$-injective dimension. 
If $(R^n_E\lim_{d\in \mathcal{D}})\left( [\Sigma^{n}F(d), X]\right) \cong 0$ for all $n>0$, then the resulting map $\hocolim F \rightarrow X$ is also nulhomotopic.
\end{theorem}
\begin{proof}
The terms in the $E^2$-page of spectral sequence~\eqref{bkss 1} which can contribute to $[\Sigma^0 \hocolim F, X]$ in the $E^{\infty}$-page are the terms of the form $(R^n_E\lim_{d\in\mathcal{D}})\left( [\Sigma^{n} F(d), X]\right)$. Consequently the vanishing hypothesis in the statement of the theorem ensures that the projection map $[\hocolim F, X] \rightarrow (R^0_E\lim_{d\in\mathcal{D}})\left( [F(d),X]\right) \cong \lim_{d\in\mathcal{D}}[F(d),X]$ is an isomorphism, and Consequently that the map induced by $\eta$, which represents zero in $\lim_{d\in\mathcal{D}}[F(d),X]$, also represents zero in $[\hocolim F, X]$.
\end{proof}

\subsection{Application to truncated cubes.}
In this subsection, we prove a few lemmas on a particular special case of Theorem~\ref{answer to question} which arises in \cref{whitehead}. The main result here is Lemma~\ref{main lemma}, which we use in our proof of Theorem~\ref{post filt}.
As in \cref{whitehead}, we will write $\Sp$ for the category of symmetric spectra in pointed simplicial sets with the positive flat stable model structure. We will write $\Ab$ for the category of abelian groups.
\begin{definition}\label{small cat}
Let $\mathcal{D}$ be the small category with all objects and non-identity morphisms drawn below:\\
\begin{center}
\begin{tikzpicture}[baseline= (a).base]
\node[scale=.7] (a) at (0,0){
\begin{tikzcd}
&
(1,1,1) 
\ar{dl}[swap, sloped, near start]{}
\ar{rr}{}
\ar[]{dd}[near end]{}
& & (1,1,0)
\ar{dd}{}
\ar{dl}[swap, sloped, near start]{}
\\
(0,1,1)
\ar[crossing over]{rr}[near start]{}
\ar{dd}[swap]{}
& & (0,1,0)
\\
&
(1,0,1)
\ar[near start]{rr}{}
\ar[sloped, near end]{dl}{}
& & (1,0,0)

\\
(0,0,1).

& & 

\end{tikzcd}
};
\end{tikzpicture}
\end{center}
\end{definition}
\begin{lemma}\label{relative rlim vanishing lemma}
Let $\mathcal{F}: \mathcal{D}^{\op} \rightarrow \Ab$ be a functor. Then the following statements are all true:
\begin{itemize} 
\item $R^0_E\lim\mathcal{F} \cong \lim \mathcal{F}$ is a subgroup of $F(1,0,0)\oplus F(0,1,0)\oplus F(0,0,1)$,
\item $R^1_E\lim\mathcal{F}$ is a subquotient of 
\[ \mathcal{F}(1,1,1)^{\oplus 3} \oplus \mathcal{F}(1,1,0)^{\oplus 2} \oplus \mathcal{F}(1,0,1)^{\oplus 2} \oplus \mathcal{F}(0,1,1)^{\oplus 2} ,\]
\item $R^2_E\lim\mathcal{F}$ is a subquotient of $\mathcal{F}(1,1,1)$,
\item and $R^n_E\lim\mathcal{F}$ vanishes for all $n>2$.
\end{itemize}
\end{lemma}
\begin{proof}
That $R^0_E\lim \mathcal{F}\cong \lim \mathcal{F}$ is a standard consequence of $\lim$ being left exact. For the remaining statements,
we first find all the relative cofree objects; ie the functors $\Ab^{\mathcal{D}^{\op}}$ that are in the image of the cofree functor $C: \Ab^{\times|\mathcal{D}^{\op}|}=\Ab^{\ob \mathcal{D}^{\op}} \rightarrow \Ab^{\mathcal{D}^{\op}}$, which is right adjoint to the functor $(ev_{d})_{d\in \mathcal{D}^{\op}}$. The relative cofree objects are the products of the diagrams of the form $C_i\in \Ab^{\mathcal{D}^{\op}}$: \footnotesize
\[ C_{111}=\left [
\begin{tikzpicture}[baseline= (a).base]
\node[scale=.6] (a) at (0,0){
\begin{tikzcd}
&
X
& & X \ar{ll}{\id}
\\
X \ar{ur}[swap, sloped, near start]{\id}
& & X \ar[crossing over]{ll}[near start]{\id} \ar{ur}[swap, sloped, near start]{\id}
\\
&
X \ar[]{uu}[near end]{\id}
& & X \ar[near start]{ll}{\id} \ar{uu}{\id}
\\
X \ar[sloped, near end]{ur}{\id} \ar{uu}[swap]{\id}
& & 
\end{tikzcd}
};
\end{tikzpicture}
\right ], 
C_{101}=
\left [
\begin{tikzpicture}[baseline= (a).base]
\node[scale=.6] (a) at (0,0){
\begin{tikzcd}
&
0
& & 0 \ar{ll}{\id}
\\
0 \ar{ur}[swap, sloped, near start]{\id}
& & 0 \ar[crossing over]{ll}[near start]{\id} \ar{ur}[swap, sloped, near start]{\id}
\\
&
X \ar[]{uu}[near end]{0}
& & X \ar[near start]{ll}{\id} \ar{uu}{0}
\\
X \ar[sloped, near end]{ur}{\id} \ar{uu}[swap]{0}
& & 
\end{tikzcd}
};
\end{tikzpicture}
\right ], 
C_{011}=
\left [
\begin{tikzpicture}[baseline= (a).base]
\node[scale=.6] (a) at (0,0){
\begin{tikzcd}
&
0
& & 0 \ar{ll}{\id}
\\
X \ar{ur}[swap, sloped, near start]{0}
& & X \ar[crossing over]{ll}[near start]{\id} \ar{ur}[swap, sloped, near start]{0}
\\
&
0 \ar[]{uu}[near end]{\id}
& & 0 \ar[near start]{ll}{\id} \ar{uu}{\id}
\\
X \ar[sloped, near end]{ur}{0} \ar{uu}[swap]{\id}
& & 
\end{tikzcd}
};
\end{tikzpicture}
\right ], \]
\[
C_{110}=
\left [
\begin{tikzpicture}[baseline= (a).base]
\node[scale=.6] (a) at (0,0){
\begin{tikzcd}
&
0
& & X \ar{ll}{\id}
\\
0 \ar{ur}[swap, sloped, near start]{\id}
& & X \ar[crossing over]{ll}[near start]{0} \ar{ur}[swap, sloped, near start]{\id}
\\
&
0 \ar[]{uu}[near end]{\id}
& & X \ar[near start]{ll}{0} \ar{uu}{\id}
\\
0 \ar[sloped, near end]{ur}{\id} \ar{uu}[swap]{\id}
& & 
\end{tikzcd}
};
\end{tikzpicture}
\right ],
C_{100}=
\left [
\begin{tikzpicture}[baseline= (a).base]
\node[scale=.6] (a) at (0,0){
\begin{tikzcd}
&
0
& & 0 \ar{ll}{\id}
\\
0 \ar{ur}[swap, sloped, near start]{\id}
& & 0 \ar[crossing over]{ll}[near start]{\id} \ar{ur}[swap, sloped, near start]{\id}
\\
&
0 \ar[]{uu}[near end]{\id}
& & X \ar[near start]{ll}{0} \ar{uu}{0}
\\
0 \ar[sloped, near end]{ur}{\id} \ar{uu}[swap]{\id}
& & 
\end{tikzcd}
};
\end{tikzpicture}
\right ],
C_{001}=
\left [
\begin{tikzpicture}[baseline= (a).base]
\node[scale=.6] (a) at (0,0){
\begin{tikzcd}
&
0
& & 0 \ar{ll}{\id}
\\
0 \ar{ur}[swap, sloped, near start]{\id}
& & 0 \ar[crossing over]{ll}[near start]{\id} \ar{ur}[swap, sloped, near start]{\id}
\\
&
0 \ar[]{uu}[near end]{\id}
& & 0 \ar[near start]{ll}{\id} \ar{uu}{\id}
\\
X \ar[sloped, near end]{ur}{0} \ar{uu}[swap]{0}
& & 
\end{tikzcd}
};
\end{tikzpicture}
\right ],\]
\[
C_{010}=
\left [
\begin{tikzpicture}[baseline= (a).base]
\node[scale=.6] (a) at (0,0){
\begin{tikzcd}
&
0
& & 0 \ar{ll}{\id}
\\
0 \ar{ur}[swap, sloped, near start]{\id}
& & X \ar[crossing over]{ll}[near start]{0} \ar{ur}[swap, sloped, near start]{0}
\\
&
0 \ar[]{uu}[near end]{\id}
& &  0 \ar[near start]{ll}{\id} \ar{uu}{\id}
\\
0 \ar[sloped, near end]{ur}{\id} \ar{uu}[swap]{\id}
& & 
\end{tikzcd}
};
\end{tikzpicture}
\right ]
\]
\normalsize
which we can think of as functors $C_{ijk}:\Ab\rightarrow \Ab^{\mathcal{D}^{\op}}$.

Now we resolve $\mathcal{F}$ by relative cofree objects:
we have a long exact sequence 
\begin{equation}\label{resolution 21039} \xymatrix{
0 \ar[r] & \mathcal{F} \ar[r] & M_0 \ar[r] & M_1 \ar[r] &M_2 \ar[r] & 0 }\end{equation}
where $M_i$ are defined as follows:
\[ M_0= \begin{array}{l} \coprod_{i,j,k}C_{ijk}(\mathcal{F}(i,j,k)) 
\end{array} \]
\[ M_1= \begin{array}{l} 
C_{011}(\mathcal{F}(1,1,1))\oplus C_{101}(\mathcal{F}(1,1,1))\oplus C_{110}(\mathcal{F}(1,1,1)) \\ \oplus C_{001}(\mathcal{F}(1,0,1))  \oplus C_{100}(\mathcal{F}(1,1,0))\oplus 
C_{010}(\mathcal{F}(0,1,1)) \\ \oplus C_{010}(\mathcal{F}(1,1,0))\oplus C_{100}(\mathcal{F}(0,1,1)) \oplus C_{100}(\mathcal{F}(1,0,1)) 
\end{array} \]
\[ M_2= C_{001}(\mathcal{F}(1,1,1))\oplus \\
C_{010}(\mathcal{F}(1,1,1))\oplus \\
C_{100}(\mathcal{F}(1,1,1))  .\]
Clearly \eqref{resolution 21039} is a resolution of $\mathcal{F}$ by $E$-projective objects in $\Ab^{\mathcal{D}^{\op}}$. For each object $d$ of $\mathcal{D}$, each morphism in \eqref{resolution 21039} has the property that its evaluation at $d$ is the composite, in $\Ab$, of a split epimorphism followed by a split monomorphism. In other words, \eqref{resolution 21039} is an $E$-resolution of $\mathcal{F}$, and we can use it to compute right $E$-derived functors. (See Chapter IX of \cite{MR1344215} for these ideas from relative homological algebra.)

So, to compute $R^*_E\lim \mathcal{F}$, we can omit $\mathcal{F}$ from \eqref{resolution 21039} and apply $\lim$, producing the cochain complex
\begin{equation}\label{resolution 21040} \xymatrix{ 0 \ar[r] & N_0 \ar[r]^{} & N_1
\ar[r]^{}& N_2 \ar[r] & 0 }\end{equation}
where $N_i$ are defined as follows:
\[ N_0=\begin{array}{l} 
\mathcal{F}(1,1,1)\oplus \mathcal{F}(1,1,0)\oplus \mathcal{F}(1,0,1)\oplus \mathcal{F}(0,1,1) \\ \oplus 
\mathcal{F}(1,0,0)\oplus \mathcal{F}(0,1,0)\oplus \mathcal{F}(0,0,1) 
\end{array} \]
\[ N_1= \begin{array}{l}
\mathcal{F}(1,1,1)\oplus \mathcal{F}(1,1,1)\oplus \mathcal{F}(1,1,1) \oplus \mathcal{F}(1,0,1)\oplus \mathcal{F}(1,1,0)\\ \oplus 
\mathcal{F}(0,1,1)\oplus \mathcal{F}(1,1,0)\oplus \mathcal{F}(0,1,1)\oplus \mathcal{F}(1,0,1) 
\end{array} \]
\[ N_2= \mathcal{F}(1,1,1)\oplus \mathcal{F}(1,1,1)\oplus \mathcal{F}(1,1,1).\]
Consequently $R^n_E\lim\mathcal{F}$ is a subquotient as claimed, and vanishes for all $n>2$. (One could also compute the maps in \eqref{resolution 21040} to get an explicit presentation of $R^1_E\lim\mathcal{F}$ and $R^2_E\lim\mathcal{F}$, but while interesting, this is unnecessary for the present paper.)
\end{proof}

\begin{lemma}\label{main lemma}
Suppose we have a commutative diagram in $\Sp$\\
\begin{center}
\begin{tikzpicture}[baseline= (a).base] 
\label{diagram 7832}
\node[scale=.7] (a) at (0,0){
\begin{tikzcd}
&
A_{(1,1,1)} 
\ar{dl}[swap, sloped, near start]{f_{(0,1,1)}^{(1,1,1)}}
\ar{rr}{f_{(1,1,0)}^{(1,1,1)}}
\ar[]{dd}[near end]{f_{(1,0,1)}^{(1,1,1)}}
& & A_{(1,1,0)} 
\ar{dd}{f_{(1,0,0)}^{(1,1,0)}}
\ar{dl}[swap, sloped, near start]{f_{(0,1,0)}^{(1,1,0)}}
\\
A_{(0,1,1)} 
\ar[crossing over]{rr}[near start]{f_{(0,1,0)}^{(0,1,1)}}
\ar{dd}[swap]{f_{(0,0,1)}^{(0,1,1)}}
& & A_{(0,1,0)} 
\\
&
A_{(1,0,1)} 
\ar[near start]{rr}{f_{(1,0,0)}^{(1,0,1)}}
\ar[sloped, near end]{dl}{f_{(0,0,1)}^{(1,0,1)}}
& & A_{(1,0,0)}, \\
A_{(0,0,1)} 
& & 
\end{tikzcd}
};
\end{tikzpicture}
\end{center}
and suppose there is a fibrant symmetric spectrum $Z$ satisfying the conditions\footnote{Recall that $Z^{-m}(A_{(i,j,k)})$ is a standard notation for $[\Sigma^{m}A_{(i,j,k)},Z]$.}
\begin{itemize}
\item $Z^{-1}(A_{(i,j,k)})$ vanishes whenever $i+j+k = 2$, and
\item $Z^{-1}(A_{(1,1,1)})$ and $Z^{-2}(A_{(1,1,1)})$ vanish.
\end{itemize}
Regard \eqref{diagram 7832} as a functor $A: \mathcal{D} \rightarrow \Sp$. 
Then the natural map of abelian groups $[\hocolim A, Z] \rightarrow \lim_{d\in \mathcal{D}^{\op}} [A(d), Z]$ is an isomorphism.

In particular, if we make the additional assumption that $Z^0(A_{(i,j,k)})$ vanishes whenever $i+j+k = 1$, then $[\hocolim A, Z]$ vanishes.
\end{lemma}

\begin{proof}
By Theorem \ref{answer to question}, the obstruction to $[\hocolim A, Z] \rightarrow \lim_{d\in \mathcal{D}^{\op}} [A(d), Z]$ being an isomorphism is the groups 
\[(R^s_E\lim_{d\in\mathcal{D}})\left( Z^{-s}A(d)\right)\] 
for $s>0$. By Lemma~\ref{relative rlim vanishing lemma}, $(R^1_E\lim_{d\in\mathcal{D}})\left( Z^{-1}A(d)\right)$ vanishes as long as $Z^{-1}A_{i,j,k}$ vanishes for all triples $(i,j,k)$ satisfying $i+j+k\geq 2$.
Also by Lemma~\ref{relative rlim vanishing lemma}, $(R^2_E\lim_{d\in\mathcal{D}})\left( Z^{-2}A(d)\right)$ vanishes as long as $Z^{-2}A_{1,1,1}$ vanishes. Finally, the group $(R^s_E\lim_{d\in\mathcal{D}})\left( Z^{-s}A(d)\right)$ vanishes for all $s>2$. This proves the first claim. If we additionally assume that $Z^0(A_{(i,j,k)})$ vanishes whenever $i+j+k = 1$, then $\lim_{d\in \mathcal{D}^{\op}} [A(d), Z]$ vanishes and Consequently $[\hocolim A, Z]$ vanishes as well. \end{proof}

\bibliographystyle{plain}
\bibliography{/Users/gabrielangelini-knoll/Library/texmf/bibtex/bib/salch}{}
\end{document}